\documentclass[12pt]{article}
\usepackage{amsmath,amssymb}
\usepackage{amsthm}
\usepackage{epsfig}
\usepackage{graphics}
\usepackage{graphicx}
\usepackage{colordvi}
\usepackage{mathrsfs} 
\usepackage{stmaryrd}
\usepackage{dsfont}
\usepackage{url}
\usepackage{hyperref}
\usepackage{xcolor,lipsum}

\hypersetup{colorlinks=true,urlcolor=blue, linkcolor=blue, citecolor=blue}
\usepackage{xr}
\externaldocument{VMRMT_N_Gen_Conj}
\externaldocument{VMRMT_N_Pr_T*T_Exact}
\externaldocument{VMRMT_N_Pr_T*T_Ineq}
\externaldocument{VMRMT_N_App_MT}

\topmargin -0.5cm \textwidth 6in \textheight 8.5in
\oddsidemargin
0.2in \evensidemargin 0.2in\marginparwidth 0.4in

\newcommand{\1}{\mathds 1}

\newcommand{\G}{\mathscr G}

\newcommand{\vep}{{\varepsilon}}

\newcommand{\lra}{\longrightarrow}

\newcommand{\lmt}{\longmapsto}

\newcommand{\bs}{\boldsymbol}

\renewcommand{\P}{{\mathbb P}}
\newcommand{\E}{{\mathbb E}}

\newcommand{\tr}{{\sf tr}}
\newcommand{\R}{{\Bbb R}}

\newcommand{\ra}{\rangle}
\newcommand{\la}{\langle}
\newcommand{\diag}{{\sf diag}}

\newcommand{\vertiii}[1]{{\left\vert\kern-0.25ex\left\vert\kern-0.25ex\left\vert #1 
    \right\vert\kern-0.25ex\right\vert\kern-0.25ex\right\vert}}

\newtheoremstyle{slantthm}{10pt}{10pt}{\slshape}{}{\bfseries}{}{.5em}{\thmname{#1}\thmnumber{ #2}\thmnote{ (#3)}.}
\newtheoremstyle{slantrmk}{10pt}{10pt}{\rmfamily}{}{\bfseries}{}{.5em}{\thmname{#1}\thmnumber{ #2}\thmnote{ #3}.}

\begin{document}
\theoremstyle{slantthm}
\newtheorem{mthm}{Theorem}
\newtheorem{mcor}[mthm]{Corollary}
\newtheorem{thm}{Theorem}[section]
\newtheorem*{thmi}{Theorem}
\newtheorem{thmm}[thm]{Theorem}
\newtheorem{prop}[thm]{Proposition}
\newtheorem{lem}[thm]{Lemma}
\newtheorem{cor}[thm]{Corollary}
\newtheorem{defi}[thm]{Definition}
\newtheorem{prob}[thm]{Problem}
\newtheorem{disc}[thm]{Discussion}
\newtheorem{ko}[thm]{Key Observation/Assumption}
\newtheorem*{conj}{Conjecture}
\theoremstyle{slantrmk}
\newtheorem{ass}[thm]{Assumption}
\newtheorem{rrmk}[thm]{Research remark}
\newtheorem{rmk}[thm]{Remark}
\newtheorem*{exe}{Exercise}
\newtheorem{eg}[thm]{Example}
\newtheorem{quest}[thm]{Quest}
\newtheorem{comm}[thm]{Comment}
\newtheorem{nota}[thm]{Notation}
\newtheorem{rev}{Revision}

\numberwithin{rev}{section}
\numberwithin{equation}{section}
\allowdisplaybreaks[4]

\title{\bf Approximating Laplace transforms of meeting times for some symmetric Markov chains}
\author{Yu-Ting Chen\footnote{Center of Mathematical Sciences and Applications, Program for Evolutionary Dynamics, Harvard University
}}
\date{}

\maketitle

\abstract{We study distributions of meeting times for finite symmetric Markov chains. For Markov kernels defined on large state spaces which satisfy certain weak inhomogeneity in return probabilities of points up to large numbers of steps,
we obtain approximation, with explicit error bounds, of the Laplace transforms of some meeting times (without scaling) by ratios of Green functions closely related to hitting times of points.
In studying this approximation, we identify
some key matrix power series in Markov kernels weighted with solutions to a discrete transport-like equation with explicit coefficients, which stems from the viewpoint that meeting time distributions are equivalent to correlations of some linear particle system.
Our result applies in particular to random walks on large random regular graphs. It gives a justification  
of the corresponding practice, among other things, in Allen, Traulsen, Tarnita and Nowak \cite{ATTN} on approximating certain critical values for
the emergence of cooperation when mutation is present.}

\section{Introduction}\label{sec:intro}
In this paper we investigate 
distributions of (first) meeting times by two independent continuous-time Markov chains. 
Meeting times arise as fundamental objects in the study of finite stochastic spatial models in mathematical biology, including, for example, voter models \cite{C:CRW, CCC:WF}, Kimura's stepping-stone model and its generalizations \cite{K:SSM, S:SSM, CD:SSM,CDZ:SSM2}, and evolutionary games on graphs \cite{ATTN,C:BC}. In contrast to the frequently visited subject in most of these studies on approximating meeting time distributions under large time scales by exponential variables (cf. Keilson~\cite{K:AE} and Aldous and Fill~\cite[Section 3.5.4]{AF:RMC} for the classical results), our main focus in this paper is on the detailed meeting time distributions \emph{up to} large times through their Laplace transforms. This direction concerns in particular the biological application of
approximating certain critical values for the emergence of cooperation under some evolutionary games. For this, a brief discussion of the background and pointers to the literature will be given in Section~\ref{sec:EGT}.

Unless otherwise mentioned, we consider throughout this paper irreducible (conservative) Markov kernels $Q$ on finite state spaces $E$ with $\#E=N> 8$ 
and subject to the symmetry condition:
\begin{align}\label{ass:sym}
Q(x,y)=Q(y,x)\quad \forall\;x,y\in E
\end{align}
as well as the zero-trace condition:
\begin{align}\label{ass:trace}
\tr(Q)=0.
\end{align}
For such a Markov kernel $Q$, 
we denote by $M_{x,y}$ the (first) meeting time of two independent continuous-time rate-$1$ $Q$-chains started at $x$ and $y$, and hence, $M_{x,x}\equiv 0$. In addition, we write $H_{x,y}$ for the first hitting time of $y$ by a rate-$1$ $Q$-chain $X^x$ started at $x$.

One of the conventional treatments of meeting times is based on the validation of the following exact distributional reduction: 
\begin{align}\label{eq:MT}
M_{x,y}\stackrel{(\rm d)}{=}H_{x,y}/2.
\end{align}
Observe that (\ref{eq:MT}) means a significant dimensionality reduction in invoking both the number of Markov chains and the size of sets to be hit, and so the driving Markov kernel can be made stand out through the associated Green functions for 
a detailed study of meeting time distributions by
\begin{align}\label{eq:HTGF}
\begin{split}
&\E\big[e^{-\lambda H_{x,y}/2}\big]\\
&\hspace{.5cm}=\left.\E\left[\int_0^\infty e^{-\lambda t/2}\1_{\{y\}}(X^x_t)dt\right]\right/\E\left[\int_0^\infty e^{-\lambda t/2}\1_{\{y\}}(X^y_t)dt\right].
\end{split}
\end{align}
Although (\ref{eq:MT}) can lead to 
this important relation and is immediate for some special Markov kernels (the classical examples are random walks on groups), obtaining the reduction (\ref{eq:MT}) in general poses some problems. For example, while we restrict our attention to Markov kernels subject to the symmetry condition (\ref{ass:sym}) in this paper, it is readily pointed out in Aldous~\cite[p.188]{A:MTMC} by the counterexample of random walks on star graphs that (\ref{eq:MT}) can fail when the symmetry condition is disobeyed. In addition, it seems that before the present paper, the reduction (\ref{eq:MT}) can only be obtained by few arguments which require
various strong links among all pointwise transition probabilities, and thus, are insufficient for a further study of
whether some ``non-global'' properties of Markov chains similar to those allowing for the exact equality (\ref{eq:MT}) can give an approximating reduction.

Our goal in this paper is to determine whether the distributional reduction (\ref{eq:MT}) is approximately correct in terms of Laplace transforms, 
for a class of ``weakly inhomogeneous'' Markov kernels. (See also Section~\ref{sec:EGT} for more on our motivation.) Informally, we consider kernels $Q$ on large, but finite, state spaces $E$ under which pointwise return probabilities are almost identical up to some large numbers of steps, and the presence of few ``wild'' points which disagree significantly with the majority on such almost homogeneity of finite-range return probabilities is allowed (see our discussion after Theorem~\ref{thmm:main-1} for the example of large random regular graphs). To formalize this, we choose
the largest sets of points which almost agree in $s$-step return probabilities under $Q$, for $s\in \Bbb Z_+$, from  
\begin{align}\label{def:RQ}
\mathcal R_Q^\gamma(x,s)\triangleq \{y\in E;|Q^s(x,x)-Q^s(y,y)|\leq \gamma\},\quad x\in E,\;\gamma\in [0,1],
\end{align}
and consider, for every $s_0\in \Bbb N$, the magnitude of
\begin{align}\label{context}
\frac{\min_{x\in E}[N-\# \mathcal R^{\gamma}_{Q}(x,s)]}{N}+\gamma
\end{align}
for all $s\in \{0,\cdots, s_0\}$. We stress that \emph{no} connections between such kernels and those which allow for the conventional distributional reduction of meeting times will be assumed a priori, and the novelty of this paper as we see it lies in the treatment of this missing link in order to understand how
 Laplace transforms of some meeting times and those of the associated hitting times of points (as in (\ref{eq:MT})) can still be related. See Section~\ref{sec:method} for further discussions.

To facilitate the present purpose to study meeting times on large sets, let us introduce some meeting times started from pairs of random points independent of the underlying $Q$-Markov chains. 
The first natural candidate 
is the pair of random points $(U,V_\infty)$ whose coordinates are independent and uniformly distributed on the state space $E$. 
See, for example, Durrett~\cite{D:Epi} for a discussion of the corresponding meeting time $M_{U,V_\infty}$, and Chen, Choi and Cox~\cite{CCC:WF} for its role in diffusion approximation of voter models defined on large sets. On the other hand, if we consider $V_s$ for $s\in \Bbb N$ being distributed according to the $s$-step distribution $Q^s(U,\,\cdot\,)$ of the discrete-time $Q$-chain started at $U$ when we condition on $U$, then the meeting times $M_{U,V_s}$ have a very different nature. For example, if we consider random walk on a large graph with relatively smaller degree, then informally, $M_{U,V_s}$ for a small $s\in \Bbb N$ is the first meeting time of two Markov chains started with points close to each other (with respect to graph distance) such that the local structure of the underlying graph becomes an essential part of its distribution, whereas $M_{U,V_\infty}$ is for starting points far away from each other and should depend much less on the local structure of the graph (see Oliveira~\cite{O:MFC} and Chen et al.~\cite[Theorem~4.1]{CCC:WF} for some rigorous treatments).

For $s\in \Bbb N\cup \{\infty\}$, we will call $M_{U,V_s}$ the {\bf $\bs s$-th order meeting time}, but 
put emphasis on the special case $s=1$ throughout this paper. There are several reasons why we choose to make this restriction. First,
 the distributions of $M_{U,V_1}$ and $M_{U,V_\infty}$ always determine each other through an explicit integral equation  
(cf. Aldous and Fill~\cite[Proposition 3.21]{AF:RMC} or Section 3 of Chen et al.~\cite{CCC:WF}). 
More importantly, since the mass of $M_{U,V_\infty}$ always escapes to infinity in the limit of a large state space (see \cite[Corollary~3.4]{CCC:WF}) and our interest here is for meeting times under the natural time scale, the viable object of study need be the distribution of $M_{U,V_1}$. Besides, the tail distribution of $M_{U,V_{s_0}}$ for finite $s_0$ can be approximated by that of $M_{U,V_1}$ if it holds that (\ref{context}) is small for $s $ up to $s_0$ (Section~\ref{sec:MT}). 
In accordance with this emphasis on the first-order meeting time and to save notation, we write $V$ for $V_1$ from now on.

The main result of this paper is given by Theorem~\ref{thmm:main-1} below. For the statement of the theorem, we define
\begin{align}\label{def:deltaQ}
\Delta_Q^{\gamma}\triangleq \min_{s\in \Bbb N}\left\{\frac{\min_{x\in E}[N-\#\mathcal R^\gamma_Q(x,s)]}{N}+\gamma,\;\;\frac{\tr(|Q|^s)}{N}\right\}
\end{align}
(recall (\ref{context})),
where $f(Q)$ is defined by functional calculus of $Q$ for every complex function $f$ defined on an open set in $\Bbb C$ containing the line segment $ [-1,1]$ and $\tr(\,\cdot\,)$ denotes trace, so that $\tr\big(f(Q)\big)=\sum f(q)$ for $q$ ranging over all eigenvalues of $Q$.

\begin{thmm}\label{thmm:main-1}
Let $Q$ be an irreducible finite Markov kernel
subject to the symmetry condition (\ref{ass:sym}) and the zero-trace condition (\ref{ass:trace}). Suppose that $Q$ is defined on a finite set $E$ of size $N=\#E> 8$. Let $\vep\in (0,1]$ and $\lambda\in (\vep,\infty)$ such that $(\lambda-\vep)N-\lambda\vep>0$, and let 
$m\in \Bbb N$ and $\gamma\in [0,1]$ 
be auxiliary parameters left to be chosen.
We have
\begin{align}\label{crit:main0}
&\left|\E[e^{-\lambda M_{U,V}}]-\left.\E\left[\int_0^\infty e^{-t\lambda /2}\1_{\{V\}}(X_t^{U})dt\right]\right/\E\left[\int_0^\infty e^{-t\lambda /2}\1_{\{V\}}(X_t^{V})dt\right]\right|\notag\\
\begin{split}
\leq &4\Big(1+\frac{\lambda}{N}\Big)^{-mN}+\left[\Big(1+\frac{\lambda}{N}\Big)\Big(1-\frac{\vep}{N}\Big)\right]^{-(mN+1)}\times \frac{C_\vep\lambda(N-\vep)}{(\lambda-\vep)N-\lambda\vep}\\
&\hspace{.5cm}+80\Delta^\gamma_Q\times \left(1+\frac{6}{N}\right)^{mN},
\end{split}
\end{align}
where $C_\vep$ is a constant which diverges as $\vep\searrow 0$ and can be chosen to be
\begin{align}
C_\vep &=\frac{1}{\pi}\left(2\vep^{-1}+9+\pi+\frac{32\sqrt{2}}{(1-\cos 1)^{1/2}}\right)\times \frac{4+\vep}{\vep},\label{def:Cvep}
\end{align}
and $\Delta_Q^{\gamma}$ is defined as in (\ref{def:deltaQ}).
\end{thmm}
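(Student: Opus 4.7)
The plan is to convert both sides of the claimed inequality into matrix-power-series expressions in $Q$, compare them term by term, and absorb the discrepancy into $\Delta_Q^\gamma$ together with the truncation errors present in (\ref{crit:main0}).

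First I would uniformize the continuous-time chains at a common rate proportional to $N$, expressing both $\E[e^{-\lambda M_{U,V}}]$ and the Green-function ratio as sums indexed by the number $n$ of joint discrete jumps, with geometric weights in $\lambda/N$. Truncating these sums at $n=mN$ yields the first error term $4(1+\lambda/N)^{-mN}$ by direct tail estimation of the uniformisation variables. The mixed factor $[(1+\lambda/N)(1-\vep/N)]^{-(mN+1)}\times C_\vep\lambda(N-\vep)/[(\lambda-\vep)N-\lambda\vep]$ should come from a separate analytic estimate needed to bound the truncation error on the Green-function side, where the expansion is sensitive to eigenvalues of $Q$ lying close to the spectral endpoint $-1$; the awkward form of $C_\vep$ in (\ref{def:Cvep}) points to an oscillatory/Fourier integral estimate precisely at that endpoint, with $\vep$ serving as a cut-off that keeps the relevant resolvent denominators uniformly bounded.

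Next, I would exploit the linear-particle-system interpretation of meeting times alluded to in the abstract. In the discretised picture, the event that the two chains have not yet met at step $n$ can be organised via inclusion-exclusion so that the resulting coefficients satisfy a discrete transport-like equation, whose solution is an explicit matrix polynomial in $Q$. This presents $\E[e^{-\lambda M_{U,V}}]$ as a power series in $Q$ applied to the distribution of the pair $(U,V)$ with $V\mid U\sim Q(U,\cdot)$. On the other side, the Green-function ratio admits a parallel resolvent expansion in $Q$. After averaging in $U,V$ and invoking the symmetry (\ref{ass:sym}) together with the zero-trace condition (\ref{ass:trace}), each expansion collapses to a weighted combination of averaged diagonal entries $N^{-1}\sum_x Q^s(x,x)$, and \emph{both series share the same coefficients provided the diagonals $Q^s(x,x)$ are $x$-independent}. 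This is the mechanism by which the classical identity $M_{x,y}\stackrel{(\mathrm d)}{=}H_{x,y}/2$ for random walks on groups drops out as the degenerate case.

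The weak-inhomogeneity hypothesis is then brought in to quantify the failure of $x$-independence of $Q^s(x,x)$. For each $s$, on $\mathcal R_Q^\gamma(x,s)$ the deviation of $Q^s(y,y)$ from $Q^s(x,x)$ is at most $\gamma$, while the complement has cardinality at most $N-\#\mathcal R_Q^\gamma(x,s)$ with $|Q^s(y,y)|\leq 1$, giving the first bound defining $\Delta_Q^\gamma$. The alternative bound $\tr(|Q|^s)/N$ dominates the same diagonal quantity via a direct spectral estimate, so at the best choice of $s$ we may use whichever of the two is smaller. Multiplying by the total absolute mass of the series weights up to $n=mN$, which by elementary counting of joint trajectories grows at most like $(1+6/N)^{mN}$, produces the final error term $80\Delta_Q^\gamma(1+6/N)^{mN}$. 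The main obstacle, I expect, is the identification of the correct discrete transport-like equation underlying the meeting-time expansion and the verification that its solution coefficients match, up to the tracked diagonal averages, those of the resolvent expansion of the Green-function ratio; any looseness there would change the explicit constants $4,80,6$ and the precise form of $C_\vep$, but the architecture of the bound --- a pure discretisation error plus a smallness-of-inhomogeneity error amplified only by an $mN$-controlled combinatorial factor --- should be robust to the exact formulation.
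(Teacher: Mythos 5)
Your proposal reproduces the paper's architecture almost exactly: express both sides of (\ref{crit:main0}) as geometric series over a discrete-time index $n$ (Poisson uniformisation at rate $N$), identify the summands as two-point voter-model correlations $N^{-1}\E[\langle\beta_u|L^n(J)|\beta_u\rangle]$ and their walk-regular idealisations built from an operator $L_0$ that replaces $\diag(\cdot)$ with $N^{-1}\tr(\cdot)\,I$ (Lemma~\ref{lem:MUV-exp}), decompose the difference into two tail-truncation errors at $n=mN$ plus a head comparison for $n\le mN$ (Lemmas~\ref{lem:T1}, \ref{lem:T2}, \ref{lem:T3}), and absorb the $L$-versus-$L_0$ discrepancy into $\Delta_Q^\gamma$ via the diagonal-deviation bound $\|N^{-1}\diag(Q^s)-N^{-2}\tr(Q^s)I\|$ of Lemma~\ref{lem:eigenbdd}. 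Two points in your sketch are misidentified, though not fatally so. First, the spectral region that makes $C_\vep$ blow up is $q$ near $+1$, not $-1$: the resolvent $(N-\zeta(N-2+2Q))^{-1}$ has a pole just outside the unit circle near $\zeta=1$ precisely because $1\in\sigma(Q)$, and the paper controls it (Lemma~\ref{lem:alpha-growth}) by a Cauchy integral on the circle $C(0,1-\vep/N)$ split into a short arc near $z=1$ (contributing $\vep^{-1}$) and the complementary arc (controlled via a $\int d\theta/(1-\cos\theta)$-type bound), rather than by an oscillatory/Fourier argument. Second, the discrete transport recurrence does not arise from inclusion-exclusion over unmet trajectories; it is the statement that $L_0^n(J)=J+\sum_s\alpha(n,s)Q^s$ with $\alpha$ obeying (\ref{rec1})--(\ref{rec3}), which follows from $L_0$ acting linearly on $J$ and on powers of $Q$ (Lemma~\ref{lem:alpha}), and likewise the $(1+6/N)^{mN}$ factor is a growth bound on $\sum_s|\alpha(n,s)|$ (Lemma~\ref{lem:bdd-0}), not a combinatorial trajectory count. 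What your plan leaves unaddressed is precisely the paper's technical core --- the construction of the congruence operator $L(C)=\E[T^*CT]$ and its substitute $L_0$, the derivation and generating-function solution of the $\alpha$-recursion (Theorem~\ref{thmm:GH}), and the three per-object error lemmas --- but the skeleton you propose is the paper's.
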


We will discuss the method of proof for Theorem~\ref{thmm:main-1} in Section~\ref{sec:method}, and start its proof in Section~\ref{sec:tpc}, with the conclusion set in Section \ref{sec:lap-2}.

Observe that the ratio of Green functions on the left-hand side of (\ref{crit:main0}) bears a strong resemblance to the Laplace transform $\E[e^{-\lambda H_{U,V}/2}]$ if we recall (\ref{eq:HTGF}). For this reason, we regard (\ref{crit:main0}) as an approximation in a weak sense for the classical distributional reduction (\ref{eq:MT}). In addition, this ratio can be written as
\begin{align}
\begin{split}\label{ratio}
&\left.\E\left[\int_0^\infty e^{-t\lambda /2}\1_{\{V\}}(X_t^{U})dt\right]\right/\E\left[\int_0^\infty e^{-t\lambda /2}\1_{\{V\}}(X_t^{V})dt\right]\\
&\hspace{3.5cm}=\left.\tr\left(\frac{Q}{\lambda+2(1-Q)}\right)\right/\tr\left(\frac{1}{\lambda+2(1-Q)}\right),
\end{split}
\end{align}
which depends only on the eigenvalue distribution of $Q$ (see the proof of Lemma~\ref{lem:MUV-exp} (iii)).

Let us discuss informally some contexts 
for which Theorem~\ref{thmm:main-1} may allow for ``good'' approximation of the Laplace transform of $M_{U,V}$ by the associated ratio of Green functions.
The key, of course, is to identify kernels $Q$ such that the associated parameters $\Delta^\gamma_Q$ in (\ref{def:deltaQ}) are small. 
We consider a kernel $Q$ such that the quantity in (\ref{context}) is small for $s$ up to some $s_0$, so the task of bounding $\Delta^\gamma_Q$ falls upon controlling $\tr(|Q|^{s_0})/N$. 
In this direction, if the eigenvalues of $Q$ are sufficiently bounded away from $\pm 1$ except for a small fraction of them, then the normalized trace term $\tr(|Q|^{s_0})/N$ is small if the magnitude of $s_0$ can be chosen relatively larger. (See also the role of spectral gaps of Markov chains in obtaining almost exponentiality of hitting times in Aldous and Fill~\cite[Section 3.5.4]{AF:RMC}.) 

In fact, if $Q$ is the random walk kernel on a (simple connected) $k$-regular graph on $N$ vertices with $k\geq 2$ and $s_0$ is even, we can alternatively bound $\tr(|Q|^{s_0})/N$ by means of partial geometry of the graph and the analogous spectral functional on the infinite $k$-regular tree.
(Here and in what follows, see Biggs \cite{B:AGT} for terminology in algebraic graph theory.) Indeed, we have
\[
\frac{\tr(|Q|^{s_0})}{N}\leq\frac{N-N(s_0)}{N}+ \int_{-1}^1 q^{s_0}f^{(k)}(q)dq,
\] 
where $N(s_0)$ is the number of vertices $x$ in the regular graph such that the subgraph induced by the vertices at graph distance $\leq s_0/2$ from $x$ defines a tree (cf. McKay~\cite[Lemma~2.2]{M:EED}), and 
\begin{align}\label{KM:SM}
f^{(k)}(q)=\frac{\sqrt{4(k-1)-(kq)^2}}{2\pi (1-q^2)},\quad q\in \left[-\frac{2\sqrt{k-1}}{k},\frac{2\sqrt{k-1}}{k}\,\right],
\end{align}
is the Kesten-McKay
density \cite{K:SRG, M:EED} for the spectral measure of the random walk kernel on the infinite $k$-regular tree and satisfies
$\lim_{s\to\infty}\int q^{s}f^{(k)}(q)dq=0$.
For approximation of the ratio of Green functions as in (\ref{crit:main0}) by explicit values using the spectral representation (\ref{ratio}) on regular graphs with large girth, see McKay~\cite[Theorem 4.4]{M:EED}.

\begin{cor}
\label{thmm:main-1-0}
Let $\{Q^{(n)}\}$ be a sequence of irreducible  finite Markov kernels subject to the symmetry condition (\ref{ass:sym}) and the zero-trace condition (\ref{ass:trace}). Assume that these kernels are defined on growing state spaces. If 
\begin{align}\label{crit:main}
\inf_{\gamma\in [0,1]}\limsup_{n\to\infty}
\Delta_{Q^{(n)}}^{\gamma}=0,
\end{align}
then
\begin{align}
\begin{split}\label{conv:main}
&\lim_{n\to\infty}\Bigg\{\E^{(n)}[e^{-\lambda M_{U,V}}]-\left.\E^{(n)}\left[\int_0^\infty e^{-\lambda t/2}\1_{\{V\}}(X^U_t)dt\right]\right/\\
&\hspace{2.5cm}\E^{(n)}\left[\int_0^\infty e^{-\lambda t/2}\1_{\{V\}}(X^{V}_t)dt\right]\Bigg\}=0,\quad \forall\;\lambda\in (0,\infty),
\end{split}
\end{align}
where $\E^{(n)}$ denotes expectation with respect to $Q^{(n)}$.
\end{cor}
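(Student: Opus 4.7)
The plan is to apply Theorem \ref{thmm:main-1} to each $Q^{(n)}$ and then choose the free parameters $\vep$, $m$, $\gamma$ in a specific order, letting $n\to\infty$ while keeping them fixed. Fix $\lambda>0$ and an arbitrary target $\eta>0$; the goal is to show the absolute value of the expression inside the braces in (\ref{conv:main}) is at most $\eta$ for all sufficiently large $n$, which suffices since $\eta$ is arbitrary.

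First I pick any $\vep\in(0,\lambda)$, which in particular fixes the constant $C_\vep$ from (\ref{def:Cvep}). With $\vep$ frozen, elementary calculus gives, as $N\to\infty$,
\[
\Big(1+\tfrac{\lambda}{N}\Big)^{-mN}\longrightarrow e^{-m\lambda},\qquad
\Big[\Big(1+\tfrac{\lambda}{N}\Big)\Big(1-\tfrac{\vep}{N}\Big)\Big]^{-(mN+1)}\longrightarrow e^{-m(\lambda-\vep)},
\]
together with $\lambda(N-\vep)/[(\lambda-\vep)N-\lambda\vep]\to \lambda/(\lambda-\vep)$. Since $\lambda-\vep>0$, I then choose $m\in\Bbb N$ large enough that
\[
4e^{-m\lambda}+\frac{C_\vep\,\lambda}{\lambda-\vep}\,e^{-m(\lambda-\vep)}<\eta/2,
\]
so that the first two terms on the right-hand side of (\ref{crit:main0}), evaluated at $Q^{(n)}$, sum to less than $\eta/2$ whenever $n$ is large enough (large enough also ensures $(\lambda-\vep)N_n-\lambda\vep>0$ and $N_n>8$, where $N_n=\#E^{(n)}$).

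With $m$ frozen, $(1+6/N)^{mN}\to e^{6m}$ as $N\to\infty$, so there exists $K=K(m)$ with $(1+6/N_n)^{mN_n}\leq K$ for all sufficiently large $n$. Invoking the hypothesis (\ref{crit:main}), I select $\gamma\in[0,1]$ with $\limsup_{n\to\infty}\Delta_{Q^{(n)}}^\gamma<\eta/(160K)$; then $80\Delta_{Q^{(n)}}^\gamma\cdot(1+6/N_n)^{mN_n}<\eta/2$ for all large $n$. Adding this to the previous estimate, the bound (\ref{crit:main0}) applied to $Q^{(n)}$ is at most $\eta$ eventually, which is the desired conclusion.

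The only genuine subtlety — and hence the one place worth care — is the forced order of the parameter choices: $\vep$ must be selected before $m$ (otherwise the blow-up $C_\vep\to\infty$ as $\vep\searrow 0$ prevents the second term from being absorbed by the exponential decay in $m$), and $\gamma$ must be selected after $m$ (since the prefactor $(1+6/N)^{mN}\to e^{6m}$ grows with $m$, so $\gamma$ has to be chosen with $m$ already in hand, using that (\ref{crit:main}) lets $\limsup_n\Delta_{Q^{(n)}}^\gamma$ be made arbitrarily small at the cost of a larger threshold $\gamma$). Once this ordering is respected, nothing beyond Theorem \ref{thmm:main-1} and routine $N\to\infty$ limits is needed.
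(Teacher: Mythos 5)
Your argument is correct and follows the same route as the paper: fix $\vep$, apply Theorem~\ref{thmm:main-1} to each $Q^{(n)}$, and dispose of the three error terms by passing limits in the auxiliary parameters in a suitable order. One small caveat on your ``forced order'' remark: the paper actually takes $\inf_{\gamma}$ \emph{before} letting $m\to\infty$ (after first sending $n\to\infty$), which works because hypothesis~\eqref{crit:main} makes that infimum exactly zero and so the prefactor $(1+6/N)^{mN}$ is irrelevant; thus $\gamma$ need not follow $m$, and both orderings are valid --- only the requirement that $\vep$ be fixed before $m$ is genuinely rigid.
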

\begin{proof}
Fix $(\vep,\lambda)$ such that $\vep\in (0,1]$ and $0<\vep<\lambda<\infty$.
Under (\ref{crit:main}), we pass limit for the analogues of (\ref{crit:main0}) for $Q^{(n)}$ first along the state space size. For the resulting error bounds, we take infimum over $\gamma\in [0,1]$ and then pass limit along $m\to\infty$.
\end{proof}

By Corollary~\ref{thmm:main-1-0}, we obtain the following particular result for random regular graphs (see, e.g., Wormald~\cite{W:MRG} for a survey of random regular graphs). A further discussion of this result will be given in Section~\ref{sec:EGT}.

\begin{cor}[Random regular graphs]\label{cor:main2}
Fix an integer $k\geq 2$.
With respect to the sequence of random walk kernels $\{Q^{(n)}\}$ associated with an i.i.d. sequence of growing (uniform) random $k$-regular graphs, it holds that
\begin{align}\label{eq:RGconv}
\lim_{n\to\infty}\E^{(n)}[e^{-\lambda M_{U,V}}]=\E^{(\infty)}[e^{-\lambda H_{x,y}/2};H_{x,y}<\infty]\quad \forall\;\lambda\in (0,\infty)
\end{align}
almost surely with respect to the randomness that the graphs are chosen.
Here $H_{x,y}$ under $\E^{(\infty)}$ denotes the first hitting time of $y$ by a rate-$1$ random walk on the (transient) infinite $k$-regular tree started at $x$ for any vertices $x$ and $y$ which are adjacent to each other.
\end{cor}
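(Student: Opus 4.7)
The plan is to deduce Corollary \ref{cor:main2} from Corollary \ref{thmm:main-1-0} by verifying its hypothesis (\ref{crit:main}) almost surely for $\{Q^{(n)}\}$ and then matching the limit of the Green-function ratio (\ref{ratio}) with the hitting-time Laplace transform on the infinite $k$-regular tree $T_k$. For the first task I observe that the second entry in (\ref{def:deltaQ}) does not depend on $\gamma$, so $\Delta^\gamma_{Q^{(n)}} \leq \tr(|Q^{(n)}|^{s_0})/n$ for every $\gamma\in [0,1]$ and $s_0\in \mathbb N$, and it suffices to show, almost surely, that $\limsup_n \tr(|Q^{(n)}|^{s_0})/n \to 0$ as $s_0 \to \infty$ along even integers. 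For even $s_0$, the McKay bound recalled after Theorem \ref{thmm:main-1} reads
$$\tr(|Q^{(n)}|^{s_0})/n \leq (n - N_n(s_0))/n + \int_{-1}^{1} q^{s_0} f^{(k)}(q)\,dq,$$
where $N_n(s_0)$ counts vertices whose $(s_0/2)$-ball in the $n$-th graph induces a tree. Standard short-cycle estimates for uniform random regular graphs, combined with concentration across the i.i.d.\ sequence (see Wormald \cite{W:MRG}), give $(n - N_n(s_0))/n \to 0$ almost surely for each fixed $s_0$, and the second term vanishes as $s_0 \to \infty$ because $f^{(k)}$ is supported strictly inside $[-1,1]$. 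This establishes (\ref{crit:main}) almost surely, in fact with $\gamma = 0$.

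With this verified, Corollary \ref{thmm:main-1-0} yields, for each $\lambda > 0$ and almost surely, $\lim_n \{\E^{(n)}[e^{-\lambda M_{U,V}}] - R_n(\lambda)\} = 0$, with $R_n$ the ratio in (\ref{ratio}). Writing
$$R_n(\lambda) = \int \frac{q}{\lambda + 2(1-q)}\,d\mu_n(q) \bigg/ \int \frac{1}{\lambda + 2(1-q)}\,d\mu_n(q)$$
in terms of the empirical spectral measure $\mu_n$ of $Q^{(n)}$, the almost-sure Kesten-McKay weak convergence $\mu_n \to f^{(k)}(q)\,dq$ (see \cite{K:SRG, M:EED}), together with continuity and boundedness of the integrands on $[-1,1]$ for $\lambda > 0$, gives almost-sure convergence of $R_n(\lambda)$ to the corresponding ratio of integrals against $f^{(k)}$.

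It remains to match this limit with $\E^{(\infty)}[e^{-\lambda H_{x,y}/2}; H_{x,y} < \infty]$ for adjacent $x, y \in T_k$. The strong Markov property at $H_{x,y}$ applied to the rate-$1$ random walk on $T_k$ yields the tree analogue of (\ref{eq:HTGF}), $\E^{(\infty)}[e^{-\lambda H_{x,y}/2}; H_{x,y} < \infty] = G^\lambda_\infty(x,y)/G^\lambda_\infty(y,y)$, with $G^\lambda_\infty(z,w) := \int_0^\infty e^{-\lambda t/2} p^\infty_t(z,w)\,dt$. Spectral calculus on $T_k$ identifies $G^\lambda_\infty(y,y) = 2\int (\lambda + 2(1-q))^{-1} f^{(k)}(q)\,dq$, and the algebraic identity $q/(\lambda + 2(1-q)) = (\lambda/2 + 1)/(\lambda + 2(1-q)) - 1/2$ together with $\int f^{(k)}\,dq = 1$ expresses the Kesten-McKay numerator in terms of $G^\lambda_\infty(y,y)$. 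Solving the two-term recurrence $(\lambda/2 + 1)g_d = (1/k)g_{d-1} + ((k-1)/k)g_{d+1}$ (for $d \geq 1$, with boundary condition at $d = 0$) satisfied by $g_d := G^\lambda_\infty(z,y)$, $d = d(z,y)$, one then verifies that both the limit ratio and $G^\lambda_\infty(x,y)/G^\lambda_\infty(y,y)$ equal the decaying root of the characteristic polynomial $(k-1)\rho^2 - k(\lambda/2+1)\rho + 1 = 0$. The main obstacle lies in this final algebraic/spectral matching, which may alternatively be extracted directly from McKay's Theorem 4.4 \cite{M:EED} already cited after Theorem \ref{thmm:main-1}; Step 1 reduces to standard facts about local tree-likeness of random regular graphs.
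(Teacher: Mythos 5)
Your overall route coincides with the paper's: verify the hypothesis (\ref{crit:main}) of Corollary \ref{thmm:main-1-0} almost surely using local tree‑likeness of random $k$‑regular graphs together with a.s.\ Kesten--McKay spectral convergence, then identify the limit of the ratio (\ref{ratio}) of Green functions by spectral calculus on the tree. Two remarks. First, on Step 1, your inequality $\Delta^\gamma_{Q^{(n)}}\leq \tr(|Q^{(n)}|^{s_0})/N_n$ relies on the literal reading of (\ref{def:deltaQ}) as a minimum over $s$ of the pointwise minimum of the two entries; but under that reading (\ref{crit:main}) is vacuous — since $\tr Q=0$ forces $\la x|Q|x\ra=0$ for all $x$, one has $\mathcal R^0_{Q}(x,1)=E$ and hence $\Delta^0_Q=0$ for every admissible $Q$. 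The way $\Delta^\gamma_Q$ is actually used in the proof of Lemma \ref{lem:T3} (it must majorize, uniformly in $s$, the quantity in Lemma \ref{lem:eigenbdd}) shows it is a supremum over $s$, not an infimum, so the trace term alone does not dominate $\Delta^\gamma_{Q^{(n)}}$; you must also control the $\mathcal R^\gamma$ term for $s\leq s_0$. This is harmless here: the non‑tree fraction $(N_n-N_n(s_0))/N_n$ you already introduce controls that term as well (tree‑like vertices share identical $s$‑step return probabilities for $s\leq s_0$), which is precisely the route the paper takes — it invokes $\lim_n \min_x[N_n-\#\mathcal R^0_{Q^{(n)}}(x,s)]/N_n=0$ a.s.\ for each $s$, plus Kesten--McKay convergence for the trace term. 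Second, on Step 2, your explicit resolvent/recurrence computation is correct but heavier than needed: once one notes that the Kesten--McKay density is the $(y,y)$‑spectral measure of $Q$ on $T_k$, both the numerator and denominator of the limiting ratio are directly $\tfrac12 G^\lambda_\infty(x,y)$ and $\tfrac12 G^\lambda_\infty(y,y)$ (by homogeneity of $T_k$, since $\int q\,(\lambda+2(1-q))^{-1}f^{(k)}dq = \tfrac12(QG^\lambda_\infty)(y,y)=\tfrac12 G^\lambda_\infty(x,y)$ for any neighbor $x$), and then strong Markov gives the Laplace transform of $H_{x,y}$ — exactly the two‑line argument the paper gives.
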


The proof of Corollary~\ref{cor:main2} is relegated to Section~\ref{sec:cor-main2}.

We recall that with probability tending to $1$ in the limit of a large graph size, random regular graphs with a fixed degree do not have homogeneity in return probabilities in all numbers of steps (cf. Wormald~\cite[Section 2.3]{W:MRG}). Nonetheless (\ref{crit:main}) holds almost surely with respect to the randomness that the graphs are chosen, as will be explained in Section~\ref{sec:cor-main2}.

\subsection{Application to evolutionary game theory}\label{sec:EGT}
The present study was in part motivated by Allen, Traulsen, Tarnita and Nowak \cite{ATTN}, and let us discuss briefly the role of the meeting times under consideration in their studies. The paper \cite{ATTN} investigates the influence of mutation on forming cliques of strategy types under some evolutionary games. The underlying game players are arranged according to finite vertex-transitive graphs, and their strategy types are updated indefinitely in some Markovian manners. Analogues of the key quantities on some infinite vertex-transitive graphs are also considered in \cite{ATTN}. See Nowak~\cite{N:ED} for an authoritative introduction to evolutionary games, Section 1 and 2 in the supplementary information of Nowak, Tarnita and Wilson \cite{NTW:EU} for an analysis of critical values under general
evolutionary games, and the recent survey paper by Allen and Nowak \cite{AN:GG} for critical values under games closely related to those in \cite{ATTN} on weighted vertex-transitive graphs.

Allen et al.~\cite{ATTN} obtains explicit results on certain critical values for the emergence of cooperation between game players, when special finite vertex-transitive graphs are taken into consideration. 
This is based on the result that on general finite vertex-transitive graphs,
the so-called identity-by-descent probabilities for two game players occupying adjacent vertices \cite[Section 2.3]{ATTN} are the only quantities left to be solved for the critical values \cite[Appendix C]{ATTN}. 
These identity-by-descent probabilities are equivalent to the Laplace transforms $\E[e^{-\lambda M_{x,y}}]$ of the first meeting times $M_{x,y}$ by independent continuous-time (rate-$1$) random walks started at adjacent vertices $x$ and $y$ (cf. the algebraic equations in \cite[Appendix B]{ATTN}), 
where $\lambda$ are
strictly positive parameters depending only on the underlying mutation rates. More generally, it is not difficult to check that on regular graphs, the Laplace transforms $\E[e^{-\lambda M_{U,V}}]$ determine the critical values for cooperation under the games in \cite{ATTN} 
(see \cite[Appendix A to C]{ATTN} and Corollary~\ref{cor:Mtime1}).

While the same evolutionary games
considered in Allen et al.~\cite{ATTN} in the absence of mutation allow for rather complete explicit results (see e.g. \cite{OHLN, TDW, CDP:VMP, C:BC}),
we are unable to expect so for the critical values in general
when mutation enters because Laplace transforms of meeting times are now involved. Nonetheless, as one special case, Allen et al. \cite[Section 2.2 and 3.2]{ATTN} circumvented the difficulty of handling the identity-by-descent probabilities equivalent to $\E[e^{-\lambda M_{U,V}}]$
on general finite regular graphs, by considering large random regular graphs and turning to the analogous identity-by-descent probabilities on infinite regular trees instead (or equivalently, $\E^{(\infty)}[e^{-\lambda M_{x,y}}]$ by the notation of Corollary~\ref{cor:main2}).
See also Szab\'o and F\'ath \cite{SF:EGG} for related discussions on the application of random regular graphs for evolutionary games on graphs. 

Corollary~\ref{thmm:main-1-0} gives a justification of the practice in Allen et al.~\cite{ATTN} discussed above if we assume a fixed mutation rate and take limit of the identity-by-descent probabilities on growing random regular graphs with a constant degree. In fact, by formalizing the discussion on approximating the Laplace transforms of first-order meeting times after Theorem~\ref{thmm:main-1}, a quantitative approximation of the critical values on finite regular graphs with bounded degree and large girth without passing to the limit is now also possible.

\subsection{Method of proof}\label{sec:method}
We consider a linear analysis for first meeting time distributions. 
Our point of view starts with the duality between
coalescing Markov chains and voter models, and we apply the linear coupling of voter models defined by products of i.i.d. random linear operators acting on initial configurations, which makes some computations natural and straightforward. This coupling of voter models and a study of similarly defined interacting particle systems can be found in Liggett \cite[Chapter IX]{L:IPS} (see e.g. \cite{FF:RAP, AL:AP, FY:LS} for recent studies).

Let us recall coalescing Markov chains, voter models and a particular result of their duality.
A system of coalescing Markov chains consists of continuous-time rate-$1$ $Q$-Markov chains started at all points of the state space $E$. They move independently before meeting  and together afterward. The associated voter model is a continuous-time rate-$N$ Markov chain $(\xi_t)$ taking values in the space of configurations of two possible ``opinions'', say $1$ and $0$, at points of the state space so that, at each epoch time, the opinion of a randomly chosen point, say $U'$, is changed to the opinion of another chosen according to $Q(U',\,\cdot\,)$.  
We consider the following particular consequence of the duality between the voter model and the coalescing Markov chains:
\begin{align}\label{eq:MTVM}
&\P(M_{x,y}>t)=\frac{1}{u(1-u)}\Big(u-\E_{\beta_u}\left[\xi_t(x)\xi_t(y)\right]\Big),\quad u\in (0,1),
\end{align}
for $x,y\in E$.
Here, under $\E_{\xi}$, the voter model starts at configuration $\xi\in \{1,0\}^E$, and $\E_{\beta_u}$ means that the initial configuration is randomized according to the random configuration $\beta_u$ which is defined by placing i.i.d. Bernoulli random variables $\beta_u(x)$ with mean $u\in (0,1)$ at all points $x$. See Liggett \cite{L:IPS}
for a general account of voter models and coalescing Markov chains.

Let us discuss 
how the duality method is applied for the study of meeting time distributions. It should be clear from the above description that
the voter model can be identified as a Markov chain on configurations which is updated sequentially by a family of i.i.d. random linear operators $(T_n)$ independent of the initial configuration. We may assume that the voter model is obtained by time-changing a Markov sequence $(\xi_n)$ by an independent rate-$N$ Poisson process, where $(\xi_n)$ is defined recursively by
\begin{align}\label{coupling}
\xi_n=T_n\xi_{n-1},\quad n\in \Bbb N.
\end{align}
Here, configurations are regarded as column vectors with coordinates indexed by points of $E$ (with respect to a fixed order).
(A detailed description of the distribution of these random linear operators is given in the proof of Proposition~\ref{prop:conj}.)
Write $J$ for the square matrix with entries identically equal to $1/N$, and let us use the standard bra-ket notation (see Paratharasy~\cite{P:QSC}) so that $\langle \beta_u|$ and $|\beta_u\rangle$ mean the row vector and the column vector corresponding to the Bernoulli random configuration $\beta_u$.
Then by the independence of the underlying objects defining the voter model started at $\beta_u$, the discrete-time analogues of the two-point correlations 
$\E_{\beta_u}[\xi_t(U)\xi_t(V_\infty)]$, 
which determine the distribution of the $\infty$-order meeting time $M_{U,V_\infty}$ through the duality equation (\ref{eq:MTVM}),
can
be written as:
\begin{align}\label{eq:coupling}
\E_{\beta_u}[\xi_n(U)\xi_n(V_\infty)]=&
\frac{1}{N}\E[\langle \beta_u|T_1^*\cdots T_n^*JT_n\cdots T_1|\beta_u\rangle]\notag\notag\\
=&\frac{1}{N}\E[\langle \beta_u|L^{n}(J)|\beta_u\rangle],\quad n\in \Bbb N,
\end{align}
where $L$ is a (deterministic) linear operator defined by the expected matrix ``congruence''
\begin{align}\label{eq:T*T}
L(C)\triangleq \E[T^*CT]
\end{align}
on the space of square matrices indexed by points of the state space. Here in (\ref{eq:T*T}), $T$ has the same distribution as a random linear operator $T_n$ 
in the linear coupling (\ref{coupling}) of the voter model, and $*$ denotes transpose. See also Liggett \cite[Section IX.3]{L:IPS} for characterizations of two-point correlations for general linear interacting particle systems by linear evolution
equations.

It is true that in general, the key matrices $L^n(J)$ in (\ref{eq:coupling}) are complicated, and hence, so are the associated discrete-time two-point correlations there. When, however, the return probabilities $Q^s(x,x)$ of the discrete-time $Q$-chain do not depend on points $x$ for every $s\in \Bbb Z_+$ or, simply, $Q$ is {\bf walk-regular} (see Godsil and McKay \cite{GM:WRG}), 
we find a detailed description for $L^n(J)$:
\begin{align}\label{eq:Ln-intro}
L^n(J)=J+\sum_{s=0}^\infty \alpha(n,s)Q^s.
\end{align}
Here, $\alpha$ is a two-parameter discrete function defined by an explicit partial recurrence equation, which in part takes the form of a discrete transport equation, has a finite-speed of propagation and is defined explicitly in terms of the traces $\tr(Q^s)$ of products of $Q$ (see Lemma~\ref{lem:alpha} for its definition). In particular, the resulting characterization of the discrete-time two-point correlations $\E_{\beta_u}[\xi_n(U)\xi_n(V_\infty)]$ is enough for an explicit expression of the Laplace transform of the \emph{first-order} meeting time, which is essentially due to the connection mentioned above between the distributions of $M_{U,V}$ and $M_{U,V_\infty}$ (see Section~\ref{sec:lap-1} for the details). Then it holds that
\begin{align}\label{eq:HT}
\begin{split}
&\E[e^{-\lambda M_{U,V}}]=\\
&\hspace{.5cm}=\left.\E\left[\int_0^\infty e^{-t\lambda /2}\1_{\{V\}}(X_t^{U})dt\right]\right/\E\left[\int_0^\infty e^{-t\lambda /2}\1_{\{V\}}(X_t^{V})dt\right]
\end{split}
\end{align}
(Corollary~\ref{cor:MUV-wr}), and so a comparison with the analogous representation (\ref{eq:HTGF}) for $\E[e^{-\lambda H_{U,V}/2}]$ entails the classical reduction 
\[
M_{U,V}\stackrel{(\rm d)}{=}H_{U,V}/2
\]
\emph{under} the assumption of walk-regularity of the underlying Markov kernel. We remark that the simple first-step recurrence argument is in fact enough to yield the equality (\ref{eq:HT}) (cf. Lemma 12 in \cite{AN:GG}). Nonetheless, it appears less clear why the explicit forms of the discrete-time two-point correlations  $\E_{\beta_u}[\xi_n(U)\xi_n(V_\infty)]$ of the voter model, implied by the infinite series (\ref{eq:Ln-intro}) for $L^n(J)$ and (\ref{eq:coupling}), can be as well obtained by inverting explicit forms for Laplace transforms of meeting times.

To establish an approximating equality for (\ref{eq:HT}) when walk-regularity of $Q$ fails, we introduce the terms
\begin{align}\label{eq:pc}
\frac{1}{N}\E[\langle \beta_u|L_0^n(J)|\beta_u\rangle]
\end{align}
as substitutes of the true correlations in (\ref{eq:coupling}), where the linear operator $L_0$ is chosen to be an ``approximating'' version of $L$  and such that
the infinite series expression in (\ref{eq:Ln-intro}) remains valid with $L^n(J)$ replaced by the new products $L^n_0(J)$. (See (\ref{def:L0}) for our choice of $L_0$.) For convenience, we will call the above terms {\bf approximating correlations} in view of (\ref{eq:coupling}), even though in general their probabilistic interpretation  is not clear to us. The main technical issues as a result of adopting these approximation correlations arise essentially in the context of large state spaces. We have to understand the asymptotic behavior of $L_0^n(J)$ for large $n$ with respect to the size of the state space, and bound the errors from replacing $L$ with $L_0$ in formulating the approximating correlations so that they do not grow too fast up to moderately large times even on large state spaces. The details are given in Section~\ref{sec:lap-2}.

\paragraph{\bf Organization of the paper.}
In Section~\ref{sec:tpc}, we study basic properties of the operator $L$ defined by (\ref{eq:T*T}), which gives the two-point correlations (\ref{eq:coupling}) of voter models, from its explicit expression to some related operator norms. Based on the explicit form, we define in Section~\ref{sec:ptpc} the operator $L_0$ in making the approximating correlations (\ref{eq:pc}), and study generating functions of the matrices $L_0^n(J)$. We show a preliminary connection among the distribution of $M_{U,V}$, the true correlations  (\ref{eq:coupling}), and the approximating correlations (\ref{eq:pc}) in Section~\ref{sec:lap-1}, and then in Section~\ref{sec:lap-2}, we prove the main approximation theorem (Theorem~\ref{thmm:main-1}) on the distributional reduction of the first-order meeting times. Details of the proof of Corollary~\ref{cor:main2} are given afterward in Section~\ref{sec:cor-main2}.
We close this paper with some relations between meeting times of all finite orders in Section~\ref{sec:MT}.

\section{Correlations of voter models}\label{sec:tpc}
Recall the linear operator $L$ defined in (\ref{eq:T*T}) by a matrix expectation. In this section, we derive its explicit representation and study related operator norms. From now on, we index entries of $N\times N$ matrices over $\Bbb C$ by points of $E$ (with a fixed order as before), and $\mathsf M_E$ stands for the linear space of $N\times N$ matrices subject to such convention. In addition, we will consistently use the bra-ket notation for matrix and vector multiplications.

To start with, let us set some matrix notation for the explicit representation of $L$. We define a linear operator $\diag$ on $\mathsf M_E$ by
\begin{align}
\diag(C)\triangleq \sum_{x\in E}|x\rangle\langle x|C|x\rangle \langle x|,\label{def:diag}
\end{align}
that is $\diag(C)$ is the diagonal matrix obtained from $C\in \mathsf M_E$ by setting all off-diagonal entries to zero and leaving all diagonal terms unchanged. Here in (\ref{def:diag}), $|x\ra$ and $\la x|$ denote respectively the column vector and the row vector with $1$ at $x$-th coordinate and zero otherwise.
 In addition, we write $J$ for the probability matrix defined by 
\begin{align}
\forall\;x,y\in E,\quad \langle x|J|y\rangle\triangleq \frac{1}{N}.\label{def:J}
\end{align}

\begin{prop}\label{prop:conj}
Fix a Markov kernel $Q$, and recall the associated linear operator $L$ which is defined by the matrix expectation (\ref{eq:T*T}). Then $L$ admits the explicit expression:
\begin{align}
\begin{split}
L(C)=&\frac{N-2}{N}C+\frac{1}{N}(CQ+QC)-\frac{1}{N}[\diag(C)Q+Q\diag(C)]\\
&+\frac{1}{N}\diag(C )
+\diag\big(Q\diag (C )J\big).\label{eq:tst}
\end{split}
\end{align}
\end{prop}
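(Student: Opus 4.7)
The plan is to give an explicit matrix description of the random operator $T$ governing one step of the voter model, expand $T^*CT$ by bilinearity, and then take expectations term by term, using independence of $U'$ and (conditionally) $W'$ together with the symmetry $Q^*=Q$.

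First I would identify $T$. At a single epoch, the point $U'$ (uniform on $E$) adopts the opinion of the point $W'\sim Q(U',\cdot)$, so $T$ acts as the identity except on row $U'$, which is replaced by $\langle W'|$. Equivalently,
\[
T = I + |U'\rangle\bigl(\langle W'|-\langle U'|\bigr),
\]
an expression that also reduces correctly to $I$ when $W'=U'$. Using $Q^*=Q$ one then has $T^* = I + \bigl(|W'\rangle-|U'\rangle\bigr)\langle U'|$, and the bilinear expansion of $T^*CT$ produces the ``no-change'' summand $C$, four ``one-sided'' pieces built from the dyads $|U'\rangle\langle W'|$, $|U'\rangle\langle U'|$, $|W'\rangle\langle U'|$, and four ``two-sided'' pieces each weighted by the random scalar $c(U'):=\langle U'|C|U'\rangle$.

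Next I would take expectations termwise, driven by two elementary facts. First, conditional on $U'=u$, the symmetry of $Q$ gives $\E[\langle W'|\mid U'=u]=\langle u|Q$ and $\E[|W'\rangle\mid U'=u]=Q|u\rangle$. Second, since $U'$ is uniform, $\E[|U'\rangle\langle U'|]=\tfrac{1}{N}I$ and, for any function $f$ on $E$, $\E[f(U')|U'\rangle\langle U'|]=\tfrac{1}{N}\diag(f)$. Applied to the one-sided terms these identities immediately yield the contributions $\tfrac{1}{N}CQ$, $\tfrac{1}{N}QC$, $-\tfrac{1}{N}C$, $-\tfrac{1}{N}C$; applied to three of the two-sided terms they yield $\tfrac{1}{N}\diag(C)$, $-\tfrac{1}{N}Q\diag(C)$, $-\tfrac{1}{N}\diag(C)Q$.

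The only step demanding real care is the remaining two-sided term $\E[c(U')|W'\rangle\langle W'|]$. Conditioning on $U'=u$ gives the diagonal matrix $\sum_w Q(u,w)|w\rangle\langle w|$; then averaging over $U'$ and invoking $Q(u,w)=Q(w,u)$ should produce $\diag(Q\diag(C)J)$, with the factor $J$ emerging from the $1/N$ that comes out of the uniform average over $U'$. This is the only place in the computation where $J$ appears, and identifying the expression as a single compact matrix product is the main (and essentially the only) non-mechanical step. Once it is in hand, collecting all contributions and simplifying the coefficient of $C$ to $(N-2)/N$ reproduces (\ref{eq:tst}); the balance of the argument is bookkeeping.
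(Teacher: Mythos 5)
Your proposal is correct and follows essentially the same route as the paper's proof: write $T = I - |U\rangle\langle U| + |U\rangle\langle V|$, expand $T^*CT$ into the nine terms (the identity term, four one-sided pieces, and four two-sided pieces carrying the scalar $\langle U|C|U\rangle$), and take expectations termwise, with the symmetry of $Q$ entering when converting $\sum_w Q(u,w)|w\rangle$ into $Q|u\rangle$ and in identifying the last term as $\diag(Q\diag(C)J)$. One small slip: the transpose $T^*= I + (|W'\rangle-|U'\rangle)\langle U'|$ is purely a matrix transpose and does not invoke $Q^*=Q$; symmetry is only needed later when integrating against $Q(U',\cdot)$.
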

\begin{proof}
Fix a constant matrix $C\in \mathsf M_E$ throughout this proof.

For the present purpose to compute the explicit expression of $L$, we may assume that the operator $T$ in its definition (\ref{eq:T*T}) is given by
\begin{align*}
T=I-|U\ra \la U|+|U\ra\la V|.
\end{align*}
Here, as before, $\P(U=x,V=y)\equiv \frac{1}{N}\langle x|Q|y\rangle$.
Then we expand the product $T^*CT$ defining $L$ as
\begin{align}
\begin{split}\label{eq:T*CT}
T^*CT=&\big(I-|U\rangle\langle U|+|V\rangle \langle U|\big)C\big(I-|U\rangle\langle U|+|U\rangle \langle V|\big)\\
=&C-C|U\rangle \langle U|+C|U\rangle \langle V|\\
&-|U\rangle \langle U|C+|U\rangle \langle U|C|U\rangle \langle U|-|U\rangle \langle U|C|U\rangle \langle V|\\
&+|V\rangle \langle U|C-|V\rangle \langle U|C|U\rangle \langle U|+|V\rangle \langle U|C|U\rangle \langle V|.
\end{split}
\end{align}
Taking expectation for both sides of the foregoing equality, we obtain
\begin{align*}
L(C)=&\,\E\left[T^*CT\right]
=C-\sum_{x\in E}\frac{1}{N}C|x\rangle \langle x|+\sum_{x,y\in E}\frac{1}{N}\langle x|Q|y\rangle C |x\rangle \langle y|\\
&-\sum_{x\in E}\frac{1}{N}|x\rangle \langle x| C+\sum_{x\in E}\frac{1}{N}|x\rangle \langle x|C|x\rangle \langle x|-\sum_{x,y\in E}\frac{1}{N}\langle x|Q|y\rangle|x\rangle \langle x|C|x\rangle \langle y|\\
&+\sum_{x,y\in E}\frac{1}{N}\langle x|Q|y\rangle |y\rangle \langle x|C-\sum_{x,y\in E}\frac{1}{N}\langle x|Q|y\rangle|y\rangle  \langle x|C|x\rangle\langle x|\\
&+\sum_{x,y\in E}\frac{1}{N}\langle x|Q|y\rangle|y\rangle \langle x|C|x\rangle\langle y|.
\end{align*}
To simplify the right-hand side of the foregoing equality, we use the assumed symmetry of the Markov kernel $Q$ and the equality $I=\sum_{x\in E}|x\rangle \langle x|$, and invoke the operators $\diag(\,\cdot\,)$ and $J$ defined above in (\ref{def:diag}) and (\ref{def:J}), respectively. It follows that 
\begin{align}
\begin{split}\label{eq:tst1}
L(C)=&C-\frac{1}{N}CI+\frac{1}{N}CQ\\
&-\frac{1}{N}IC+\frac{1}{N}\diag(C)-\frac{1}{N}\diag(C)Q\\
&+\frac{1}{N}QC-\frac{1}{N}Q\diag(C)+\diag \big(Q\diag(C)J\big).
\end{split}
\end{align}
We obtain the required equation (\ref{eq:tst}) from (\ref{eq:tst1}) after rearrangement.
The proof is complete.
\end{proof}

We consider some particular cases of (\ref{eq:tst}).

\begin{cor}\label{cor:tst} With respect to the operator $L$ defined by (\ref{eq:T*T}), we have
\begin{align}
L(J)&=J+\frac{2}{N^2}(I-Q),\label{eq:itPi}\\
\begin{split}
L(Q^s)&=\frac{N-2}{N}Q^s+\frac{2}{N}Q^{s+1}-\frac{1}{N}[\diag(Q^s)Q+\frac{1}{N}Q\diag(Q^s)]\\
&\hspace{.5cm}+\frac{1}{N}\diag(Q^s)+\diag\big(Q\diag(Q^s)J\big),\quad \forall\;s\in \Bbb Z_+.\label{eq:itQ}
\end{split}
\end{align}
In particular, if the $s$-step return probabilities $\langle x|Q^s|x\rangle $ do not depend on $x$ for $s\in\Bbb Z_+ $, then (\ref{eq:itQ}) simplifies to
\begin{align}
L(Q^s)=&\frac{N-2}{N}Q^s+\frac{2}{N}Q^{s+1}-\frac{2\tr(Q^s)}{N^2}Q+\frac{2\tr(Q^s)}{N^2}I.\label{eq:itQw}
\end{align}
\end{cor}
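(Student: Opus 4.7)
My plan is to derive all three identities as direct specializations of the explicit formula (\ref{eq:tst}) from Proposition~\ref{prop:conj}, using only two elementary facts about the matrix $J$: first, $\diag(J)=\frac{1}{N}I$; and second, $QJ=JQ=J$, which follows because $Q$ is row-stochastic and symmetric (so every entry of $QJ$ equals $\frac{1}{N}\sum_z Q(x,z)=\frac{1}{N}$).

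For (\ref{eq:itPi}), I would substitute $C=J$ into (\ref{eq:tst}). The first-order terms give $\frac{N-2}{N}J + \frac{1}{N}(JQ+QJ)=\frac{N-2}{N}J+\frac{2}{N}J=J$. The diagonal correction $-\frac{1}{N}[\diag(J)Q+Q\diag(J)]$ collapses to $-\frac{2}{N^2}Q$, and the last two terms $\frac{1}{N}\diag(J)+\diag(Q\diag(J)J)$ each contribute $\frac{1}{N^2}I$ (for the second one, note $Q\diag(J)J = \frac{1}{N}QJ = \frac{1}{N}J$, whose diagonal is $\frac{1}{N^2}I$). Adding everything gives $J+\frac{2}{N^2}(I-Q)$, as required.

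For (\ref{eq:itQ}), I would substitute $C=Q^s$ into (\ref{eq:tst}) and only simplify the off-diagonal piece, since $Q^s Q = Q Q^s = Q^{s+1}$ turns $\frac{1}{N}(CQ+QC)$ into $\frac{2}{N}Q^{s+1}$. The other three summands are left in their generic form since no further simplification is possible without additional information on $\diag(Q^s)$.

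For (\ref{eq:itQw}), the hypothesis $\langle x|Q^s|x\rangle = c_s$ (independent of $x$) forces $\diag(Q^s)=c_s I$ with $c_s=\tr(Q^s)/N$. Then $\diag(Q^s)Q+Q\diag(Q^s)=2c_sQ$, the term $\frac{1}{N}\diag(Q^s)$ becomes $\frac{\tr(Q^s)}{N^2}I$, and using $QJ=J$ once more we get $\diag(Q\diag(Q^s)J)=\diag(c_s J)=\frac{\tr(Q^s)}{N^2}I$. Collecting the $Q$-terms and the $I$-terms from these three contributions yields $-\frac{2\tr(Q^s)}{N^2}Q+\frac{2\tr(Q^s)}{N^2}I$, which combined with the previously simplified off-diagonal part gives (\ref{eq:itQw}). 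There is no real obstacle: the entire corollary is an exercise in substitution, the only nontrivial input being the identity $QJ=JQ=J$ for the symmetric stochastic kernel $Q$.
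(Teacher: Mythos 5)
Your proposal is correct and follows essentially the same route as the paper: direct substitution of $C=J$ and $C=Q^s$ into (\ref{eq:tst}), together with $QJ=JQ=J$, $\diag(J)=\tfrac{1}{N}I$, and---for the walk-regular case---$\diag(Q^s)=[\tr(Q^s)/N]I$. One incidental observation: the bracketed term in (\ref{eq:itQ}) as printed, $-\tfrac{1}{N}[\diag(Q^s)Q+\tfrac{1}{N}Q\diag(Q^s)]$, appears to carry a stray extra $\tfrac{1}{N}$; since you derive directly from (\ref{eq:tst}) you correctly obtain $-\tfrac{1}{N}[\diag(Q^s)Q+Q\diag(Q^s)]$, which is also the form needed for (\ref{eq:itQw}) to follow.
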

\begin{proof}
To see (\ref{eq:itPi}), we apply to (\ref{eq:tst}) the fact that
$QJ=JQ=J$, and obtain the required equation:
\begin{align*}
L(J)&=\frac{N-2}{N}J+\frac{1}{N}2J-\frac{1}{N^2}2Q +\frac{1}{N^2}I+\frac{1}{N^2}I
=J+\frac{2}{N^2}(I-Q).
\end{align*}

Equation (\ref{eq:itQ}) is a straightforward consequence of (\ref{eq:tst}). For the particular case that $\langle x|Q^s|x\rangle $ do not depend on $x$, we note that $\langle x|Q^s|x\rangle$ is equivalent to the arithmetic mean of the diagonal terms of $Q^s$ and so
$\diag(Q^s)=[\tr(Q^s)/N]I$. It follows that (\ref{eq:itQ}) can be written as
\begin{align*}
L(Q^s)
&=\frac{N-2}{N}Q^s+\frac{2}{N}Q^{s+1}-\frac{2\tr(Q^s)}{N^2}Q+\frac{\tr(Q^s)}{N^2}I+\frac{\tr(Q^s)}{N^2}I\\
&=\frac{N-2}{N}Q^s+\frac{2}{N}Q^{s+1}-\frac{2\tr(Q^s)}{N^2}Q+\frac{2\tr(Q^s)}{N^2}I,
\end{align*}
which is (\ref{eq:itQw}).
\end{proof}

Let us turn to some operator norms related to $L$.
We equip $\mathsf M_E$ with the $\ell_1$-norm:
\begin{align}\label{eq:matrix-norm}
\|C\|\triangleq \sum_{x,y\in E}|\langle x|C|y\rangle|
\end{align}
and consider the (induced) operator norms
\[
\vertiii{S}\triangleq \max\{\|S(C)\|;\|C\|=1,C\in \mathsf M_E\}
\]
for linear operators $S:\mathsf M_E\lra \mathsf M_E$. The following proposition will be used in Section~\ref{sec:lap-2} for the proof of Theorem~\ref{thmm:main-1}.

\begin{prop}\label{prop:norm}
We have the following bounds:
\begin{enumerate}
\item [\rm (i)] $\displaystyle \vertiii{L}= 1$,
\item [\rm (ii)] $\displaystyle \vertiii{L-I}\leq \frac{4}{N}$.
\end{enumerate}
\end{prop}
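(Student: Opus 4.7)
My plan is to handle (i) and (ii) separately, working directly from the representation $L(C) = \E[T^*CT]$ with $T = I - |U\rangle\langle U| + |U\rangle\langle V|$ used in the proof of Proposition~\ref{prop:conj}.

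For the upper bound in (i), I would exploit the fact that for every realization of $(U,V)$, $T$ is a $\{0,1\}$-matrix whose rows are standard basis vectors (row $x$ equals $\langle x|$ for $x\neq U$, while row $U$ equals $\langle V|$), so that every row of $T$ sums to $1$. Non-negativity of the entries together with this row-sum property immediately give, by interchanging summations, both $\|AT\| \leq \|A\|$ and (using column sums of $T^*$) $\|T^*B\| \leq \|B\|$ for any $A,B \in \mathsf M_E$. Applying both bounds yields the pathwise inequality $\|T^*CT\| \leq \|C\|$; taking expectation and using the convexity bound $\|\E[X]\| \leq \E\|X\|$ gives $\vertiii{L}\leq 1$. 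For the matching lower bound, I would evaluate $L(I)$ from (\ref{eq:tst}): with $\diag(I) = I$, $IQ = QI = Q$, $QJ = J$, and $\diag(J) = (1/N)I$, the two contributions $(2/N)Q$ and $-(2/N)Q$ cancel, and the remaining scalar multiples of $I$ collapse to $L(I) = I$. Since $\|I\| = N$, this gives $\vertiii{L} \geq 1$.

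For (ii), I would read off $T^*CT - C$ entry by entry from the structure of $T$. Because column $U$ of $T$ is zero and row $U$ of $T^*$ is zero, both row $U$ and column $U$ of $T^*CT$ vanish; off the union of rows/columns $\{U,V\}$ one has $(T^*CT)_{xy} = \langle x|C|y\rangle$; along row $V$ outside column $U$ the matrix acquires an extra row-$U$ contribution of $C$, and symmetrically along column $V$; finally $(T^*CT)_{VV} = \langle V|C|V\rangle + \langle V|C|U\rangle + \langle U|C|V\rangle + \langle U|C|U\rangle$. Writing $r_x = \sum_y |\langle x|C|y\rangle|$ and $c_y = \sum_x |\langle x|C|y\rangle|$ for the row- and column-sums of $|C|$, a direct tally of the nonzero entries of $T^*CT - C$ (carefully avoiding double-counting the four corner positions $(U,U),(U,V),(V,U),(V,V)$) yields the pathwise bound
\[
\|T^*CT - C\| \;\leq\; 2(r_U + c_U) - 2|\langle U|C|U\rangle| \;\leq\; 2(r_U + c_U).
\]
Taking expectation and using that $U$ has the uniform marginal on $E$ (immediate from $\P(U=x,V=y) = \langle x|Q|y\rangle/N$ and $Q$ being stochastic), we get $\E[r_U + c_U] = (2/N)\|C\|$, hence
\[
\|L(C) - C\| \;\leq\; \E\|T^*CT - C\| \;\leq\; \frac{4}{N}\|C\|,
\]
which is the stated bound.

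The only delicate step is the entry-wise bookkeeping in (ii): the sharp constant $4$ (rather than the $6$ one gets from a direct triangle-inequality expansion of (\ref{eq:tst})) comes from observing that the excess copies of the corner entries $|\langle U|C|U\rangle|,|\langle U|C|V\rangle|,|\langle V|C|U\rangle|$ appearing in the four affected rows/columns cancel against the contribution from position $(V,V)$, leaving a bound involving only the two row-sums and two column-sums at $U$. I do not expect any obstacle beyond this accounting.
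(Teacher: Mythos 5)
Your proposal is correct, and it takes a genuinely different route from the paper's. The paper specializes the explicit formula~(\ref{eq:tst}) to the cases $C=\diag(C)$ and $\diag(C)=0$, then computes the exact value of $\|L(|x\rangle\langle y|)\|$ and $\|L(|x\rangle\langle y|)-|x\rangle\langle y|\|$ on the elementary matrices; linearity and the triangle inequality then finish the job. You instead argue pathwise on the congruence $T^*CT$: for~(i) you use that each realization of $T$ is a $\{0,1\}$-matrix whose rows each sum to one (hence $\|T^*CT\|\le\|C\|$ before taking expectation), with the lower bound supplied by the fixed point $L(I)=I$; for~(ii) you tabulate the nonzero entries of $T^*CT-C$ (they live in rows/columns $U$ and $V$ only), obtain the pathwise bound $\|T^*CT-C\|\le 2(r_U+c_U)-2|\langle U|C|U\rangle|$ in terms of the row- and column-sums of $|C|$ at the index $U$, and then use that $U$ is uniform to get $\frac{4}{N}\|C\|$ in expectation. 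Your accounting checks out (I verified the row-$V$/column-$V$ contributions and the $(V,V)$ corner), and Jensen-type passage $\|\E[X]\|\le\E\|X\|$ is valid for the $\ell_1$ matrix norm. The two proofs buy different things: the paper's is shorter once~(\ref{eq:tst}) is available and gives exact values on the basis matrices, while yours makes transparent why the sharp constant is $4$ (two rows and two columns pinned at the uniformly random $U$, each contributing $\|C\|/N$) and never needs the decomposition into diagonal and off-diagonal parts. One cosmetic point: your phrase ``using column sums of $T^*$'' is slightly roundabout — it's the row sums of $T$ again (which equal the column sums of $T^*$) that drive $\|T^*B\|\le\|B\|$; and the parenthetical claim that a direct triangle-inequality expansion of~(\ref{eq:tst}) gives $6/N$ looks optimistic (a naive term-by-term bound gives $8/N$ to me), but that aside does not affect your argument.
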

\begin{proof}
We note that by (\ref{eq:tst}),
\begin{align}
L(C)&=\frac{N-1}{N}C+\diag(QCJ)\quad\mbox{if }C=\diag(C),\label{eq:diagL}\\
L(C)&=\frac{N-2}{N}C+\frac{1}{N}(CQ+QC)\quad\mbox{if }\diag(C)=0.\label{eq:offdiagL}
\end{align}
By (\ref{eq:diagL}), we have
\begin{align}
\|L(|x\ra\la x|)\|&=\frac{N-1}{N}+\sum_{y\in E}\sum_{z\in E}\langle y|Q|z\rangle \langle z|x\ra\la x|z\rangle\langle z|J|y\rangle
=1,
\label{eq:diag1}
\end{align}
and by (\ref{eq:offdiagL}), for $x\neq y$, 
\begin{align}\label{Ixy}
\|L(|x\ra \la y|)\|=\frac{N-2}{N}+\frac{1}{N}\sum_{z\in E}\langle y|Q|z\rangle+\frac{1}{N}\sum_{z\in E}\langle z|Q|x\rangle=1.
\end{align}
Since $\big\||x\ra \la y|\big\|= 1$ for all $x,y\in E$, we deduce (i) from (\ref{eq:diag1}) and (\ref{Ixy}).

By (\ref{eq:diagL}) and (\ref{eq:offdiagL}) again, we obtain, respectively,
\begin{align}
\big\|L(|x\ra \la x|)-|x\ra \la x|\big\|=&\sum_{y\in E}\left|\frac{-1}{N}\langle y|x\ra \la x|y\ra+\sum_{z\in E}\la y|Q|z\ra\la z|x\ra \la x|z\ra \la z|J|y\ra \right|\notag\\
=&\sum_{y\in E}\left|\frac{-1}{N}\langle y|x\ra \la x|y\ra+\frac{1}{N}\la x|Q|y\ra\right|\notag\\
=&\frac{2}{N}(1-\langle x|Q|x\rangle)\leq \frac{2}{N}\label{L-IIxx}
\end{align}
and, for $x\neq y$,
\begin{align}
&\big\|L(|x\ra \la y|)-|x\ra \la y|\big\|\notag\\
=&\sum_{z,a\in E}\left|-\frac{2}{N}\la z|x\ra \la y|a\ra+\frac{1}{N}\la z|x\ra\langle y|Q|a\rangle+\frac{1}{N}\la z|Q|x\ra\la y|a\ra\right|\label{L-IIxy-0}\\
=&\frac{1}{N}\big(2-\la y|Q|y\ra-\la x|Q|x\ra\big)+\frac{1}{N}\sum_{a:a\neq y}\langle y|Q|a\ra+\frac{1}{N}\sum_{z:z\neq x}\la z|Q|x\ra\label{L-IIxy-1}\\
= &\frac{2}{N}\big(2-\la y|Q|y\ra-\la x|Q|x\ra\big)\leq \frac{4}{N}, \label{L-IIxy}
\end{align}
where (\ref{L-IIxy-1}) follows by noting that the summands on the right-hand side of (\ref{L-IIxy-0}) are nonzero only if $z=x$ or $a=y$. 
By (\ref{L-IIxx}) and (\ref{L-IIxy}), we deduce (ii).
\end{proof}

\section{Approximating correlations}\label{sec:ptpc} 
The matrices $L^n(J)$ allow for simple expressions in the walk-regular case. This is pointed out in (\ref{eq:Ln-intro}) in Section~\ref{sec:intro}, and we will give the proof later on in Lemma~\ref{lem:alpha}. In general, we can still choose an operator $L_0$ which is similar to $L$ so that $L_0^n(J)$ gives the same expression (\ref{eq:Ln-intro}) (with $L^n(J)$ replaced) even if the assumption of walk-regularity is not in force. One choice is 
to replace $\diag(\,\cdot\,)$ with $[\tr(\,\cdot\,)/N]I$ in the representation (\ref{eq:tst}) of $L$, and consider accordingly
\begin{align}\label{def:L0}
 L_0(C)&\triangleq \frac{N-2}{N}C+\frac{1}{N}(CQ+QC)-\frac{2\tr(C)}{N^2}Q
+\frac{2\tr(C)}{N^2}I
\end{align}
for $C\in \mathsf M_E$. We will work with the matrices $L_0^n(J)$ throughout this section, and it will be helpful to keep in mind that they are meant to ``approximate'' $L^n(J)$, 
in a sense to be made precise in Section~\ref{sec:lp}.

\begin{lem}\label{lem:alpha}
Consider a two-parameter function 
$\alpha:\Bbb Z_+\times \Bbb Z_+\lra \R$
defined recursively as follows. Set $\alpha(0,s)\equiv 0$ for all $s\in \Bbb Z_+$ and, for $n\in \Bbb Z_+$,
\begin{align}
\alpha(n+1,0)&=\frac{2}{N^2}+\alpha(n,0)+\frac{2}{N^2}\sum_{s=1}^\infty \alpha(n,s)\tr(Q^s),\label{rec1}\\
\alpha(n+1,1)&=\frac{-2}{N^2}+\frac{N-2}{N}\alpha(n,1)-\frac{2}{N^2}\sum_{s=1}^\infty \alpha(n,s)\tr(Q^s),\label{rec2}\\
\alpha(n+1,s)&=\frac{N-2}{N}\alpha(n,s)+\frac{2}{N}\alpha(n,s-1),\quad s\geq 2.\label{rec3}
\end{align}
Then the action of the $n$-iteration of the linear map $L_0$ defined by (\ref{def:L0}) on $J$ satisfies
\begin{align}
L_0^n(J)=J+\sum_{s=0}^\infty \alpha(n,s)Q^s,\quad \forall\;n\in \Bbb Z_+.\label{eq:L0n}
\end{align}
In particular, if $Q$ is walk-regular, then
\begin{align}\label{eq:Ln}
L^n(J)=L_0^n(J)=J+\sum_{s=0}^\infty \alpha(n,s)Q^s,\quad \forall\;n\in \Bbb N.
\end{align}
\end{lem}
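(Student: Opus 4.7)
The plan is to establish (\ref{eq:L0n}) by induction on $n$, with (\ref{eq:Ln}) following as a short corollary. The base case $n=0$ is trivial since $\alpha(0,s)\equiv 0$ forces the right-hand side to equal $J = L_0^0(J)$.

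For the inductive step, assume (\ref{eq:L0n}) at level $n$, apply $L_0$ to both sides, and use linearity:
\begin{equation*}
L_0^{n+1}(J) = L_0(J) + \sum_{s=0}^\infty \alpha(n,s)\, L_0(Q^s).
\end{equation*}
From the defining formula (\ref{def:L0}), using $QJ = JQ = J$, $\tr(J) = 1$, $\tr(I) = N$ and the zero-trace hypothesis $\tr(Q) = 0$ from (\ref{ass:trace}), a direct computation gives the four ``boundary'' identities $L_0(J) = J + \frac{2}{N^2}(I-Q)$, $L_0(I) = I$, $L_0(Q) = \frac{N-2}{N}Q + \frac{2}{N}Q^2$, and for $s \geq 2$,
\begin{equation*}
L_0(Q^s) = \frac{N-2}{N}Q^s + \frac{2}{N}Q^{s+1} - \frac{2\tr(Q^s)}{N^2}Q + \frac{2\tr(Q^s)}{N^2}I.
\end{equation*}
Substituting these and collecting the coefficients in front of $J$, $I = Q^0$, $Q$ and each $Q^s$ with $s \geq 2$, then comparing with $J + \sum_s \alpha(n+1,s) Q^s$, one reads off precisely the recurrences (\ref{rec1})--(\ref{rec3}). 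The sums $\sum_{s\geq 1}\alpha(n,s)\tr(Q^s)$ appearing in (\ref{rec1})--(\ref{rec2}) can equivalently be written as $\sum_{s\geq 2}\alpha(n,s)\tr(Q^s)$, which is what the matrix computation produces, because the $s=1$ term vanishes by $\tr(Q)=0$.

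To legitimize this manipulation of infinite series, I would first record the finite speed of propagation $\alpha(n,s) = 0$ for all $s > n$. This is a secondary induction on $n$: the base $n=0$ is the definition, and (\ref{rec3}) together with the fact that both $\alpha(n,s)$ and $\alpha(n,s-1)$ vanish for $s \geq 2$ and $s > n+1$ handles the step. Consequently every sum encountered in the proof is in fact a finite sum.

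For (\ref{eq:Ln}) under walk-regularity, Corollary~\ref{cor:tst}, specifically (\ref{eq:itPi}) and (\ref{eq:itQw}), shows that $L(J)$ and $L(Q^s)$ are given by exactly the same formulas as $L_0(J)$ and $L_0(Q^s)$. Since (\ref{eq:L0n}) puts $L_0^n(J)$ in the linear span of $\{J,I,Q,Q^2,\ldots\}$, an induction $L^{n+1}(J) = L(L^n(J)) = L_0(L^n(J)) = L_0^{n+1}(J)$ concludes the proof. The only nontrivial point in the whole argument is the bookkeeping required to align the three separate cases $s=0$, $s=1$, $s\geq 2$ of the recursion with the distinct behaviors of $L_0$ on $I$, $Q$ and higher powers; once $\tr(Q)=0$ is used to absorb the $s=1$ term into the generic trace sum, there is no genuine analytic obstacle.
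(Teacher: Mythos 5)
Your proof is correct and follows essentially the same inductive scheme as the paper's for both claims. The one substantive difference is in the walk-regular conclusion (\ref{eq:Ln}): the paper invokes Hoffman's theorem to place $J$ inside the polynomial algebra $\mathcal{A}=\{g(Q)\}$ and then argues that $L=L_0$ on $\mathcal{A}$, whereas you observe directly from the just-proved identity (\ref{eq:L0n}) that $L_0^n(J)$ already lies in the \emph{linear span} of $\{J,I,Q,Q^2,\ldots\}$, on which $L$ and $L_0$ coincide by Corollary~\ref{cor:tst} and (\ref{def:L0}). This dispenses with Hoffman's theorem and is slightly more direct, but the underlying step --- exhibit a subspace containing the $L_0$-orbit of $J$ on which $L$ restricts to $L_0$ --- is the same.
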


We note that $\alpha$ has a finite speed of propagation, or more precisely,
$\alpha(n,s)=0$ whenever $s\geq n+1$, and (\ref{rec3}) takes the form of a discrete transport equation.

\begin{proof}[Proof of Lemma~\ref{lem:alpha}]
We prove (\ref{eq:L0n}) by an induction on $n\in\Bbb Z_+$. The case $n=0$ follows plainly by the assumed initial condition of $\alpha$. Suppose that (\ref{eq:L0n}) is true for some $n\in \Bbb Z_+$. Then we have
\begin{align*}
L_0^{n+1}(J)=L_0\big(L^n_0(J)\big)=&L_0\left(J+\sum_{s=0}^\infty \alpha(n,s)Q^s\right)=L_0(J)+\sum_{s=0}^\infty \alpha(n,s)L_0(Q^s),
\end{align*}
where the infinite series are only finite sums since $\alpha$ has a finite speed of propagation. By the definition (\ref{def:L0}) of $L_0$, we can express $L_0(J)$ and $L_0(Q^s)$ on the right-hand side of the foregoing equality by linear combinations of $J$ and $Q^s$ (cf. the proof of Corollary~\ref{cor:tst}), and get
\begin{align*}
L_0^{n+1}(J)
=&\left(J-\frac{2}{N^2}Q+\frac{2}{N^2}I\right)\\
&+\sum_{s=0}^\infty \alpha(n,s)\left(\frac{N-2}{N}Q^s+\frac{2}{N}Q^{s+1}-\frac{2\tr(Q^s)}{N^2}Q+\frac{2\tr(Q^s)}{N^2}I\right)\notag\\
\begin{split}
\hspace{-1cm}=&J+\left(\frac{2}{N^2}+\alpha(n,0)\frac{N-2}{N}+\sum_{s=0}^\infty \alpha(n,s)\frac{2\tr(Q^s)}{N^2}\right)I\\
&+\left(-\frac{2}{N^2}+\alpha(n,1)\frac{N-2}{N}+\alpha(n,0)\frac{2}{N}-\sum_{s=0}^\infty \alpha(n,s)\frac{2\tr(Q^s)}{N^2}\right)Q\\
&+\sum_{s=2}^\infty \left(\alpha(n,s)\frac{N-2}{N}+\alpha(n,s-1)\frac{2}{N}\right)Q^s.
\end{split}
\end{align*}
Since $\tr(I)=N$, (\ref{eq:L0n}) with $n$ replaced by $n+1$ follows from the last equality and the definition of $\alpha(n+1,\,\cdot\,)$ (see (\ref{rec1})--(\ref{rec3})). By mathematical induction (\ref{eq:L0n}) is true for all $n\in \Bbb Z_+$.

The proof of (\ref{eq:Ln}) proceeds with an induction on $n\in \Bbb N$. We need two observations. Notice that when $Q$ is walk-regular,
\begin{align}\label{L=L0}
L(J)=L_0(J)\quad\mbox{ and }\quad L(Q^s)=L_0(Q^s)\quad\forall\; s\in \Bbb Z_+,
\end{align}
by Corollary~\ref{cor:tst} and (\ref{def:L0}).
In addition, it follows from Hoffman's theorem that $J=f(Q)$ for some polynomial $f$ over $\R$ (see \cite[Proposition 3.2]{B:AGT}). We see that
\begin{align}\label{LL0-rest}
\mbox{(\ref{L=L0}) holds}\quad \Longleftrightarrow \quad L=L_0\;\mbox{on}\;\mathcal A\triangleq \{g(Q)\}
\end{align}
where $g$ ranges over all polynomials over $\Bbb C$, and $\mathcal A$ defines an algebra. In addition, it is plain from the definition (\ref{def:L0}) of $L_0$ that
\begin{align}\label{L0-A}
L_0(\mathcal A)\subseteq \mathcal A.
\end{align}

We are ready to give the proof of (\ref{eq:Ln}). It holds for $n=1$ by (\ref{L=L0}). If (\ref{eq:Ln}) holds for some $n\in \Bbb N$, then we get
\[
L^{n+1}(J)=L\big(L_0^n(J)\big)=L_0\big(L_0^n(J)\big)=L_0^{n+1}(J),
\]
where the second equality follows from (\ref{LL0-rest}) and (\ref{L0-A}).
Hence, by mathematical induction, (\ref{eq:Ln}) is true for all $n\in \Bbb N$. The proof is complete.
\end{proof}

Next, we derive the generating function of the above discrete function $\alpha$, using functional calculus for the symmetric matrix $Q$. Recall that we assume the state space of $Q$ has size $N>8$.

\begin{thmm}\label{thmm:GH}
Let $\alpha$ be the two-parameter function defined in Lemma~\ref{lem:alpha} for $L_0$. Write $\mathscr G\alpha(\zeta,q)$ for the two-parameter generating function of $\alpha$:
\begin{align}
\G\alpha(\zeta,q)\triangleq \sum_{n=0}^\infty \zeta^n\sum_{s=0}^\infty q^s\alpha(n,s).
\label{def:A0}
\end{align}
Then $\G\alpha(\zeta,q)$ converges absolutely for any $\zeta,q\in \Bbb C$ with $|\zeta|<1$ and $|q|\leq  1$. Moreover for such $\zeta,q$, 
\begin{align}
\G\alpha(\zeta,q)&=\frac{2\zeta(1-q)[N-\zeta( N-2+2 q)]^{-1}}{N(1-\zeta)^2\tr\left(\frac{1}{N-\zeta(N-2+2Q)}\right)}.\label{eq:A0}
\end{align}
\end{thmm}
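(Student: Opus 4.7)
The plan is to convert the partial recurrence (\ref{rec1})--(\ref{rec3}) into a closed functional equation for $\G\alpha(\zeta,q)$ and then solve it.

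\textbf{Collapse to a one-step recursion in $n$.} I set $A_n(q):=\sum_{s\ge 0}q^s\alpha(n,s)$; the finite speed of propagation in (\ref{rec3}) together with $\alpha(0,\cdot)\equiv 0$ makes each $A_n$ a polynomial of degree at most $n$. Multiplying (\ref{rec3}) by $q^s$, summing over $s\ge 2$, and adjoining the $s=0,1$ contributions from (\ref{rec1})--(\ref{rec2}), the $\alpha(n,0)$ and $\alpha(n,1)$ cross-terms are designed to cancel cleanly; using $\tr(Q^0)=N$ to absorb $N\alpha(n,0)$ into $\sum_{s\ge 1}\alpha(n,s)\tr(Q^s)$, the outcome is the affine recursion
\[
A_{n+1}(q)=\frac{N-2+2q}{N}A_n(q)+\frac{2(1-q)\bigl[1+\tr(A_n(Q))\bigr]}{N^2}.
\]

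\textbf{Pass to the generating function.} Multiplying by $\zeta^{n+1}$, summing over $n\ge 0$, using $A_0\equiv 0$, and interpreting $\sum_n\zeta^n\tr(A_n(Q))=\tr(\G\alpha(\zeta,Q))$ through the functional calculus of the symmetric $Q$ yields
\[
\G\alpha(\zeta,q)\cdot\frac{N-\zeta(N-2+2q)}{N}=\frac{2\zeta(1-q)}{N^2}\left[\frac{1}{1-\zeta}+\tr\bigl(\G\alpha(\zeta,Q)\bigr)\right].
\]
Denote the bracket by $D$. This already exhibits $\G\alpha(\zeta,q)$ as $2\zeta(1-q)D\bigl\{N[N-\zeta(N-2+2q)]\bigr\}^{-1}$. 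Substituting $q\to Q$ and applying $\tr$ reduces to a single scalar linear equation for $\tr(\G\alpha(\zeta,Q))$; writing $K:=\tr\bigl(2\zeta(1-Q)\bigl\{N[N-\zeta(N-2+2Q)]\bigr\}^{-1}\bigr)$, I get $\tr(\G\alpha(\zeta,Q))=KD$ and hence $D=[(1-\zeta)(1-K)]^{-1}$. A short cancellation using $1=\tr(I/N)$ collapses the numerator of $1-K$ to $N(1-\zeta)$, giving
\[
1-K=(1-\zeta)\,\tr\!\left(\frac{1}{N-\zeta(N-2+2Q)}\right).
\]
Plugging back into the displayed formula for $\G\alpha(\zeta,q)$ produces exactly (\ref{eq:A0}).

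\textbf{Convergence and analytic extension.} Taking $\sup_{|q|\le 1}$ in the Step-1 recursion and using $|\tr(A_n(Q))|\le N\sup_{|q|\le 1}|A_n(q)|$ (since the spectrum of $Q$ lies in $[-1,1]$) produces a bound of the form $\sup_{|q|\le 1}|A_n(q)|\le C(1+c/N)^n$; this secures absolute convergence of $\G\alpha(\zeta,q)$ on a small disk about $\zeta=0$ where the manipulations above are unconditionally rigorous. The right-hand side of (\ref{eq:A0}) is manifestly analytic in $\zeta$ on $\{|\zeta|<1\}$ and continuous in $|q|\le 1$: the matrix $N-\zeta(N-2+2Q)$ is invertible since $|\zeta(N-2+2Q)|\le|\zeta|\cdot N<N$, and the scalar $\tr\bigl([N-\zeta(N-2+2Q)]^{-1}\bigr)$ is a sum of strictly positive terms for real $\zeta\in(0,1)$, nonvanishing there and extended to the complex disk by a standard identity-theorem argument. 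Analytic continuation then promotes (\ref{eq:A0}) to the full region in the statement.

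The main obstacles, as I see them, are the bookkeeping in the first paragraph --- checking that the three boundary recurrences assemble without residue into the single affine map in $\tr(A_n(Q))$ --- and, on the analytic side, verifying that the denominator $\tr\bigl([N-\zeta(N-2+2Q)]^{-1}\bigr)$ of (\ref{eq:A0}) is genuinely nonvanishing on the full open disk $|\zeta|<1$, so that the continuation step in the last paragraph is legitimate; everything in between is routine linear algebra.
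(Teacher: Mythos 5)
Your algebraic computation in the first two paragraphs is correct and in fact a cleaner packaging of what the paper does: you fold all three boundary recursions into the single affine map
\[
A_{n+1}(q)=\frac{N-2+2q}{N}\,A_n(q)+\frac{2(1-q)\bigl[1+\tr A_n(Q)\bigr]}{N^2},
\]
whereas the paper (Lemma~\ref{lem:solve}) keeps the three component series $\sum_n\zeta^n\alpha(n,0)$, $\sum_n\zeta^n\alpha(n,1)$, $\sum_n\zeta^n\sum_{s\ge 2}q^s\alpha(n,s)$ separate and solves them in sequence. The identity $1-K=(1-\zeta)\tr\bigl([N-\zeta(N-2+2Q)]^{-1}\bigr)$ checks out, and the a priori bound you sketch establishes absolute convergence on a small $\zeta$-disk, matching the role of Lemma~\ref{lem:bdd-0}. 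All of that is sound.

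The gap is exactly the one you flag at the end, and it is not a formality: your last paragraph does not prove that $\tr\bigl([N-\zeta(N-2+2Q)]^{-1}\bigr)$ is nonvanishing on the full open disk $\{|\zeta|<1\}$. Positivity of each summand for real $\zeta\in(0,1)$ says nothing about the disk --- compare $1+\zeta^2$, which is positive on $(0,1)$ and vanishes at $\zeta=\sqrt{-1}$ --- and the ``identity-theorem argument'' you invoke only extends the \emph{equality} between two functions where both are already holomorphic; it cannot conjure analyticity of the right-hand side past a zero of the denominator. Without nonvanishing, the continuation stops as soon as the right-hand side picks up a pole, and the theorem's claim at arbitrary $|\zeta|<1$ is not established. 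The paper closes this hole with Lemma~\ref{lem:bdd-1}: each eigenvalue $q\in[-1,1]$ contributes a M\"obius-transform term $\zeta\mapsto (1-\zeta)/(N-\zeta(N-2+2q))$ whose image of a circle $C(0,1-\vep/N)$ is a circle $a_q-b_q e^{\sqrt{-1}\theta}$ in the right half-plane; averaging over $q$ the modulus is minimized at $\theta=0$, i.e.\ at the real point $1-\vep/N$, giving a uniform strictly positive lower bound on every such circle. Some argument of this kind --- reducing the nonvanishing claim on circles to the real axis via geometry of the individual M\"obius images, or an equivalent sign argument --- is needed before your continuation step is legitimate.
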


Apply the foregoing theorem to (\ref{eq:L0n}) by the spectral representation of $f(Q)$ for complex functions $f$ which are analytic around $[-1,1]$. We obtain the following corollary immediately.

\begin{cor}\label{cor:L0R0-gen}
The generating function of the square matrices $L_0^n(J)$ with respect to exponent $n\in \Bbb Z_+$ is given by
\begin{align}\label{eq:genL}
\begin{split}
\mathscr G L_0 (J)(\zeta)&\triangleq \sum_{n=0}^\infty \zeta^n L_0^n(J)\\
&=\frac{1}{1-\zeta}J+\frac{2\zeta(I-Q)[N-\zeta( N-2+2 Q)]^{-1}}{N(1-\zeta)^2\tr\left(\frac{1}{N-\zeta(N-2+2Q)}\right)}.
\end{split}
\end{align}
Here, the infinite series of matrices converges absolutely with respect to the $\ell_1$-norm on $\mathsf M_E$ (see (\ref{eq:matrix-norm})) whenever $\zeta\in \Bbb C$ with $|\zeta|<1$.
\end{cor}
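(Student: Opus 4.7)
The plan is to combine the explicit series representation of $L_0^n(J)$ from Lemma~\ref{lem:alpha} with the closed-form generating function of $\alpha$ in Theorem~\ref{thmm:GH}, mediated by functional calculus applied to the symmetric matrix $Q$. I would start from
\[
\sum_{n=0}^\infty \zeta^n L_0^n(J)=\sum_{n=0}^\infty\zeta^n J+\sum_{n=0}^\infty \zeta^n\sum_{s=0}^\infty \alpha(n,s)Q^s.
\]
The first term sums to $(1-\zeta)^{-1}J$ when $|\zeta|<1$. For the second, the inner sum over $s$ is in fact finite for each $n$ because $\alpha(n,s)=0$ once $s\geq n+1$, so no subtle interchange is needed on that side.

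Next I would use the spectral decomposition $Q=\sum_j q_j P_j$ with $q_j\in[-1,1]$ (all eigenvalues lie in this interval by the symmetry and stochasticity of $Q$). Then $\sum_{s=0}^\infty \alpha(n,s)Q^s=\sum_j P_j\sum_{s=0}^\infty \alpha(n,s)q_j^s$, and summing in $n$ yields
\[
\sum_{n=0}^\infty \zeta^n\sum_{s=0}^\infty\alpha(n,s)Q^s=\sum_j P_j\,\G\alpha(\zeta,q_j)=\G\alpha(\zeta,Q),
\]
where the middle equality uses Theorem~\ref{thmm:GH}, which gives the absolute convergence of the scalar double series for $|\zeta|<1$ and $|q_j|\leq 1$ and hence legitimizes Fubini eigenvalue by eigenvalue, and the last equality is the definition of $\G\alpha(\zeta,Q)$ via functional calculus, available because $q\mapsto \G\alpha(\zeta,q)$ is analytic on a neighborhood of $[-1,1]$ for fixed $|\zeta|<1$ (the only possible singularities of the scalar formula \eqref{eq:A0} are at $q=1$ and at zeros of the trace-denominator, neither of which occur on $[-1,1]$ for $|\zeta|<1$). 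Substituting the closed form \eqref{eq:A0} with the symbolic $q$ replaced by the matrix $Q$ gives the claimed second summand on the right-hand side of \eqref{eq:genL}, after factoring out $(I-Q)$ from the numerator.

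It remains to justify absolute convergence of $\sum_n \zeta^n L_0^n(J)$ in the $\ell_1$-norm for $|\zeta|<1$. I would do this directly from the spectral representation: both $J$ and each $Q^s$ have $\ell_1$-norm bounded independently of $s$ by a constant depending only on $N$, while $|\alpha(n,s)|$ is controlled by the coefficients of $\G\alpha(\zeta,q)$, which by Theorem~\ref{thmm:GH} are uniformly bounded on compact subsets of $\{|\zeta|<1\}\times\{|q|\leq 1\}$. Combining these with finite-speed of propagation of $\alpha$ gives $\|L_0^n(J)\|\leq C(N)(1+n)$ say, which is far more than enough for absolute convergence when $|\zeta|<1$.

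The only mildly delicate step is the functional-calculus substitution, but given that $Q$ is real symmetric with eigenvalues in $[-1,1]$ and the right-hand side of \eqref{eq:A0} is meromorphic in $q$ with no poles on $[-1,1]$ for $|\zeta|<1$, this reduces to the standard fact that $f(Q)$ is well-defined for $f$ analytic on a neighborhood of the spectrum and agrees with any convergent power series representation of $f$. Everything else is bookkeeping.
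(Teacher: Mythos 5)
Your proof takes essentially the same route as the paper, which disposes of the corollary in one sentence by combining the series representation $L_0^n(J)=J+\sum_s\alpha(n,s)Q^s$ from Lemma~\ref{lem:alpha} with the scalar generating function of Theorem~\ref{thmm:GH} via functional calculus of $Q$; your spelling-out of the spectral decomposition and the pole location of $q\mapsto\G\alpha(\zeta,q)$ is exactly what the phrase ``spectral representation of $f(Q)$'' is shorthand for. The one place I would push back is the assertion $\|L_0^n(J)\|\le C(N)(1+n)$: uniform boundedness of $\G\alpha$ on compact subsets of $\{|\zeta|<1\}\times\{|q|\le 1\}$ does not give a polynomial bound on the Taylor coefficients in $\zeta$; Cauchy's estimates give only $|\alpha(n,s)|\le C(r)\,r^{-n}$ for each fixed $r<1$, and together with the finite speed of propagation this yields $\|L_0^n(J)\|\le N + (n+1)\,N\,C(r)\,r^{-n}$, which is geometric, not polynomial. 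The absolute convergence claim is nevertheless correct, since for $|\zeta|<1$ one picks $r\in(|\zeta|,1)$ and the factor $(|\zeta|/r)^n$ saves the day. A cleaner way to reach the same conclusion, and perhaps what was intended by the paper, is to observe that Theorem~\ref{thmm:GH} already asserts the \emph{absolute} convergence of the double series defining $\G\alpha(\zeta,q)$ at $q=1$, so that
\[
\sum_{n=0}^\infty |\zeta|^n\sum_{s=0}^\infty |\alpha(n,s)|<\infty,
\]
and since $\|Q^s\|=N$ for every $s$ (in the $\ell_1$-norm of~(\ref{eq:matrix-norm})) this bounds $\sum_n|\zeta|^n\|L_0^n(J)-J\|$ directly, without any appeal to Cauchy coefficient estimates.
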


The rest of this section is devoted to the proof of Theorem~\ref{thmm:GH}. 
The conclusion will be given at the end of this section.

We begin with an a priori estimate to ensure that the radius of convergence for $\G\alpha$ is not degenerate. See also Lemma~\ref{lem:alpha-growth} for different estimates.

\begin{lem}\label{lem:bdd-0}
The entries of $\alpha$ satisfy
\begin{align}\label{ineq:radii-a}
\max\left\{|\alpha(n,0)|,|\alpha(n,1)|,\sum_{s=2}^\infty|\alpha(n,s)|\right\}\leq \frac{1}{N}\left(1+\frac{6}{N}\right)^{n-1},\quad n\in \Bbb N.
\end{align}
\end{lem}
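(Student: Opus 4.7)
The plan is a direct induction on $n$ with the single quantity
\[
A_n \triangleq \max\Bigl\{|\alpha(n,0)|,\ |\alpha(n,1)|,\ \sum_{s=2}^\infty |\alpha(n,s)|\Bigr\},
\]
with the inductive claim $A_n\le \tfrac{1}{N}(1+\tfrac{6}{N})^{n-1}$ for all $n\ge 1$.

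For the base case $n=1$, I would just unwind the recurrences (\ref{rec1})--(\ref{rec3}) starting from $\alpha(0,s)\equiv 0$, which gives $\alpha(1,0)=2/N^2$, $\alpha(1,1)=-2/N^2$ and $\alpha(1,s)=0$ for $s\ge 2$. Hence $A_1=2/N^2\le 1/N$.

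For the inductive step, the key input is that $Q$ is symmetric with spectrum in $[-1,1]$, so $|\tr(Q^s)|\le\sum_i|q_i|^s\le N$ for every $s\in\Bbb N$. Plugging this into (\ref{rec1}) and (\ref{rec2}) together with the triangle inequality, and separating the sum $\sum_{s\ge 1}$ into the $s=1$ term and the $s\ge 2$ tail, yields the bounds
\begin{align*}
|\alpha(n+1,0)| &\le \tfrac{2}{N^2}+|\alpha(n,0)|+\tfrac{2}{N}\bigl(|\alpha(n,1)|+\textstyle\sum_{s\ge 2}|\alpha(n,s)|\bigr)\le \tfrac{2}{N^2}+A_n\bigl(1+\tfrac{4}{N}\bigr),\\
|\alpha(n+1,1)| &\le \tfrac{2}{N^2}+\tfrac{N-2}{N}|\alpha(n,1)|+\tfrac{2}{N}\bigl(|\alpha(n,1)|+\textstyle\sum_{s\ge 2}|\alpha(n,s)|\bigr)\le \tfrac{2}{N^2}+A_n\bigl(1+\tfrac{2}{N}\bigr).
\end{align*}
For the tail, a re-indexing argument applied to (\ref{rec3}) gives
\[
\sum_{s=2}^\infty|\alpha(n+1,s)|\le \tfrac{N-2}{N}\sum_{s=2}^\infty|\alpha(n,s)|+\tfrac{2}{N}\sum_{s=1}^\infty|\alpha(n,s)|\le A_n\bigl(1+\tfrac{2}{N}\bigr),
\]
where the clean cancellation between the $(N-2)/N$ and $2/N$ factors leaves only $(1+2/N)$.

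The only mildly tricky point is absorbing the constant term $2/N^2$ (which never decays) into the geometric factor. Concretely, I would verify algebraically that
\[
\tfrac{2}{N^2}+A_n\bigl(1+\tfrac{4}{N}\bigr)\le A_n\bigl(1+\tfrac{6}{N}\bigr)\cdot\text{(a quantity $\ge 1$)}
\]
by using the inductive hypothesis in the form $\tfrac{2}{N^2}\le \tfrac{2}{N^2}(1+\tfrac{6}{N})^{n-1}$, which is precisely what the slack $6/N-4/N=2/N$ in the exponential base is designed to accommodate. This is what forces the base to be $1+6/N$ rather than $1+4/N$. Combining this with the other two estimates closes the induction and proves (\ref{ineq:radii-a}).
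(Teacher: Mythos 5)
Your proof is correct and follows essentially the same route as the paper: both derive three inequalities for $|\alpha(n+1,0)|$, $|\alpha(n+1,1)|$, and $\sum_{s\ge 2}|\alpha(n+1,s)|$ from the recurrences (\ref{rec1})--(\ref{rec3}) using $|\tr(Q^s)|\le N$, then close by induction on $n$. The only difference is that you spell out the induction the paper leaves implicit, including the absorption of the additive $2/N^2$ term via $(1+6/N)^{n-1}\ge 1$.
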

\begin{proof}
Note that $|\tr(Q^s)/N|\leq 1$ for all $s\in \Bbb Z_+$. Hence, (\ref{rec1})--(\ref{rec3}) imply 
\begin{align*}
&|\alpha(n+1,0)|\leq \frac{2}{N^2}+|\alpha(n,0)|+\frac{2}{N}|\alpha(n,1)|+\frac{2}{N}\sum_{s=2}^\infty |\alpha(n,s)|,\\
&|\alpha(n+1,1)|\leq \frac{2}{N^2}+|\alpha(n,1)|+\frac{2}{N}\sum_{s=2}^\infty |\alpha(n,s)|,\\
&\sum_{s=2}^\infty |\alpha(n+1,s)|\leq \sum_{s=2}^\infty |\alpha(n,s)|+\frac{2}{N}|\alpha(n,1)|.
\end{align*}
Then (\ref{ineq:radii-a}) can be checked by induction on $n$ (recall the initial condition of $\alpha$) and the foregoing inequalities.
The proof is complete.
\end{proof}

\begin{lem}\label{lem:solve}
Let $\zeta,q\in \Bbb C$ be such that  
\begin{align}
|\zeta|<\frac{N}{N+6}\quad\mbox{ and }\quad |q|\leq 1.\label{zetaq}
\end{align}
Then the infinite series (\ref{def:A0}) defining $\mathscr G\alpha(\zeta,q)$ converges absolutely, and 
we have
\begin{align}
&\sum_{n=0}^\infty \zeta^n\alpha(n,0)=\frac{-[N-\zeta(N-2)]}{N(1-\zeta)}\times\sum_{n=0}^\infty \zeta^n\alpha(n,1),\label{alpha0-gen}\\
&\sum_{n=0}^\infty \zeta^n\alpha(n,1)=\frac{-2\zeta}{(1-\zeta)[N-\zeta(N-2)]^2\tr\left(\frac{1}{N-\zeta(N-2+2Q)}\right)},\label{alpha1-gen}\\
&\sum_{n=0}^\infty \zeta^n\sum_{s=2}^\infty q^s\alpha(n,s)=\frac{2\zeta q^2}{N-\zeta(N-2+2q)}\times
\sum_{n=0}^\infty \zeta^n\alpha(n,1).\label{alpha-gen}
\end{align}
Hence, (\ref{eq:A0}) holds for $\zeta,q\in \Bbb C$ satisfying (\ref{zetaq}).
\end{lem}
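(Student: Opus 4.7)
The plan is to introduce the one-parameter generating functions $A_s(\zeta):=\sum_{n\geq 0}\zeta^n\alpha(n,s)$, translate each of (\ref{rec1})--(\ref{rec3}) into a relation among these, and then sum over $s$ weighted by $q^s$ to assemble $\mathscr G\alpha(\zeta,q)$. Absolute convergence of the double series (\ref{def:A0}) under (\ref{zetaq}) is immediate from Lemma~\ref{lem:bdd-0}: summing the three bounds stated there yields $\sum_{s\geq 0}|\alpha(n,s)|\leq \tfrac{3}{N}(1+6/N)^{n-1}$ for $n\geq 1$, and since $|q|\leq 1$ the hypothesis $|\zeta|<N/(N+6)$ makes $\sum_{n,s}|\zeta|^n|q|^s|\alpha(n,s)|$ finite.

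The geometric structure of (\ref{rec3}) handles $s\geq 2$. Because $\alpha(0,s)=0$, translating (\ref{rec3}) gives $A_s(\zeta)=r(\zeta)\,A_{s-1}(\zeta)$ for $s\geq 2$ with $r(\zeta):=2\zeta/[N-(N-2)\zeta]$, hence $A_s(\zeta)=r(\zeta)^{s-1}A_1(\zeta)$ for all $s\geq 1$. Summing the resulting geometric tail in $qr(\zeta)$ (which lies in the open unit disc under (\ref{zetaq})) and simplifying yields (\ref{alpha-gen}). For (\ref{alpha0-gen}), translating (\ref{rec1}) and (\ref{rec2}) into generating-function identities and adding them causes the constants $\pm 2\zeta/[N^2(1-\zeta)]$ to cancel and, simultaneously, the entire correction $\sum_{s\geq 1}\tr(Q^s)A_s(\zeta)$ to cancel, leaving $(1-\zeta)A_0(\zeta)+\bigl(1-\tfrac{N-2}{N}\zeta\bigr)A_1(\zeta)=0$, which rearranges to (\ref{alpha0-gen}).

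The main algebraic work is (\ref{alpha1-gen}). Using $A_s=r^{s-1}A_1$ for $s\geq 1$ one evaluates
\[
\sum_{s\geq 1}\tr(Q^s)A_s(\zeta)=A_1(\zeta)\sum_{s\geq 1}r^{s-1}\tr(Q^s)=A_1(\zeta)\,\tr\bigl(Q(I-rQ)^{-1}\bigr),
\]
and since $(I-rQ)^{-1}=\mu(\mu I-2\zeta Q)^{-1}$ with $\mu:=N-(N-2)\zeta$ and $\nu:=N-\zeta(N-2+2Q)$, this equals $\mu A_1(\zeta)\,\tr(Q/\nu)$. Plugging this into the generating-function form of (\ref{rec2}) and solving for $A_1(\zeta)$ produces
\[
A_1(\zeta)=\frac{-2\zeta}{(1-\zeta)\,\mu\bigl[N+2\zeta\,\tr(Q/\nu)\bigr]}.
\]
The decisive step is the trace identity $N+2\zeta\,\tr(Q/\nu)=\mu\,\tr(1/\nu)$, which collapses the denominator to $\mu^2\,\tr(1/\nu)$ and thereby yields (\ref{alpha1-gen}). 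This identity is the eigenvalue-by-eigenvalue consequence, through functional calculus on $Q$, of the elementary rewrite $\tfrac{q}{\mu-2\zeta q}=\tfrac{1}{2\zeta}\bigl(\tfrac{\mu}{\mu-2\zeta q}-1\bigr)$. I expect this trace collapse to be the main obstacle: one has to spot that the ``spurious'' $Q$-weighted trace in the denominator is equivalent, up to a calibrated affine combination, to the $1$-weighted trace appearing in the final formula.

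Finally, (\ref{eq:A0}) is assembled as $\mathscr G\alpha(\zeta,q)=A_0(\zeta)+qA_1(\zeta)+\sum_{s\geq 2}q^s A_s(\zeta)$; substituting (\ref{alpha0-gen}) and (\ref{alpha-gen}) and factoring out $A_1(\zeta)$, the bracket reduces through the short identity $-\nu_q+qN(1-\zeta)=(q-1)\mu$, where $\nu_q:=N-\zeta(N-2+2q)$, to $(q-1)\mu^2/[N(1-\zeta)\nu_q]$. Multiplying by the explicit $A_1(\zeta)$ from (\ref{alpha1-gen}) then cancels the $\mu^2$ factor and reproduces (\ref{eq:A0}).
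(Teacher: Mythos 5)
Correct, and essentially the same route as the paper: translate the recurrences (\ref{rec1})--(\ref{rec3}) into generating-function identities, solve for $\sum_n\zeta^n\alpha(n,1)$ using linearity of trace on $Q$, and assemble via (\ref{alpha0-gen}) and (\ref{alpha-gen}). The only cosmetic difference is that you express the whole weighted sum as $\sum_{s\geq 1}\tr(Q^s)A_s(\zeta)=A_1(\zeta)\,\tr\bigl(Q(I-rQ)^{-1}\bigr)$, yielding the single trace $\tr(Q/\nu)$ directly, whereas the paper separates the $s=1$ term and plugs in (\ref{alpha-gen}) for $s\geq 2$, arriving at $\tr(Q)$ and $\tr(Q^2/\nu)$ and then collapsing them by linearity of trace---the two reductions are algebraically identical.
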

\begin{proof}
It follows immediately from Lemma~\ref{lem:bdd-0} that $\G \alpha(\zeta,q)$ converges absolutely for $\zeta,q\in \Bbb C$ satisfying (\ref{zetaq}). We may assume throughout the proof that $\zeta\neq 0$.

First, we derive (\ref{alpha0-gen}) and (\ref{alpha-gen}) in order. We add up both sides of (\ref{rec1}) and (\ref{rec2})
and get
\[
\alpha(n+1,0)+\alpha(n+1,1)=\alpha(n,0)+\frac{N-2}{N}\alpha(n,1),\quad n\in \Bbb Z_+.
\]
Using the foregoing equality and the fact that $\alpha(0,\cdot)\equiv 0$, we deduce that
\begin{align*}
\frac{1}{\zeta}\sum_{n=0}^\infty \zeta^n\alpha(n,0)+\frac{1}{\zeta}\sum_{n=0}^\infty \zeta^n\alpha(n,1)&=\sum_{n=0}^\infty \zeta^{n}\alpha(n+1,0)+\sum_{n=0}^\infty \zeta^{n}\alpha(n+1,1)
\\
&=\sum_{n=0}^\infty \zeta^n\alpha(n,0)+\frac{N-2}{N}\sum_{n=0}^\infty \zeta^n\alpha(n,1),
\end{align*}
which implies (\ref{alpha0-gen}).
Next, by (\ref{rec3}) and the fact that $\alpha(0,\,\cdot\,)\equiv 0$, we get
\begin{align*}
&\frac{1}{\zeta}\sum_{n=0}^\infty \zeta^n\sum_{s=2}^\infty q^s \alpha(n,s)=
\sum_{n=0}^\infty \zeta^n\sum_{s=2}^\infty q^s\alpha(n+1,s)\\
&\hspace{.5cm}=\left(\frac{N-2}{N}+\frac{2q}{N}\right)\sum_{n=0}^\infty\zeta^n \sum_{s=2}^\infty q^s\alpha(n,s)+\frac{2q^2}{N}\sum_{n=0}^\infty \zeta^n\alpha(n,1),
\end{align*}
and the equality (\ref{alpha-gen}) follows.

Let us solve for the series
$\sum_{n=0}^\infty \zeta^n\alpha(n,1)$ which appears on both of the right-hand sides of (\ref{alpha0-gen}) and (\ref{alpha-gen}). By the initial condition $\alpha(0,\,\cdot\,)\equiv 0$ and (\ref{rec2}),
\begin{align*}
\begin{split}
&\frac{1}{\zeta}\sum_{n=0}^\infty \zeta^n\alpha(n,1)
=\sum_{n=0}^\infty \zeta^n\alpha(n+1,1)\\
&\hspace{1cm}=\frac{-2}{N^2(1-\zeta)}
+\frac{N-2}{N}\sum_{n=0}^\infty \zeta^n\alpha(n,1)
-\frac{2\tr(Q )}{N^2}\sum_{n=0}^\infty \zeta^n\alpha(n,1)\\
&\hspace{1.5cm}-\frac{2}{N^2}\tr\left(\sum_{n=0}^\infty \zeta^n\sum_{s=2}^\infty Q^s\alpha(n,s)\right),
\end{split}
\end{align*}
where the absolute convergence of the infinite series is justified by Lemma~\ref{lem:bdd-0}.
By the foregoing equality, we get
\begin{align*}
&\sum_{n=0}^\infty \zeta^n\alpha(n,1)\\
=&\frac{-2\zeta}{N^2(1-\zeta)}+\frac{\zeta (N-2)}{N}\sum_{n=0}^\infty \zeta^n\alpha(n,1)-\frac{2\zeta\tr(Q )}{N^2}\sum_{n=0}^\infty \zeta^n\alpha(n,1)\\
&-\frac{2\zeta}{N^2}\tr\left(\sum_{n=0}^\infty \zeta^n\sum_{s=2}^\infty Q^s\alpha(n,s)\right)\\
=&\frac{-2\zeta}{N^2(1-\zeta)}+\left[\frac{\zeta (N-2)}{N}- \frac{2\zeta\tr(Q )}{N^2}- \frac{4\zeta^2}{N^2}\tr\left( \frac{Q^2}{N-\zeta(N-2+2Q)}
\right)\right]\\
&\times \sum_{n=0}^\infty \zeta^n\alpha(n,1),
\end{align*}
where the second equality follows from (\ref{alpha-gen}), and hence,
\begin{align}\label{alphan1}
\begin{split}
&\sum_{n=0}^\infty \zeta^n\alpha(n,1)\\
&\hspace{.5cm}=\frac{-2\zeta}{N(1-\zeta)\left[N-N\zeta+2\zeta+2\zeta\frac{\tr(Q)}{N}+\frac{4\zeta^2}{N}  \tr\left( \frac{Q^2}{N-\zeta(N-2+2Q)}
\right)\right]}.
\end{split}
\end{align}
By linearity of trace, we obtain 
\begin{align*}
&N-N\zeta+2\zeta+2\zeta\frac{\tr(Q)}{N}+\frac{4\zeta^2}{N}  \tr\left( \frac{Q^2}{N-\zeta(N-2+2Q)}
\right)\\
=&\frac{[N(1-\zeta)+2\zeta]^2}{N}\tr\left(\frac{1}{N-\zeta(N-2+2Q)}\right),
\end{align*}
and hence,
(\ref{alpha1-gen}) by the foregoing equality and (\ref{alphan1}).

Finally, we recall (\ref{def:A0}) and use (\ref{alpha0-gen})--(\ref{alpha-gen}) to obtain
\begin{align*}
\G\alpha(\zeta,q)&=\left\{\frac{-[N-\zeta(N-2)]}{N(1-\zeta)}+q+\frac{2\zeta q^2}{N-\zeta(N-2+2q)}\right\}
\\
&\hspace{1cm}\times\frac{-2\zeta}{(1-\zeta)[N-\zeta(N-2)]^2\tr\left(\frac{1}{N-\zeta(N-2+2Q)}\right)},
\end{align*}
from which we deduce the equality (\ref{eq:A0}) for $ \zeta,q\in \Bbb C$ satisfying (\ref{zetaq}). The proof is complete.
\end{proof}

From now on, we write $C(0,R)$, $D(0,R)$ and $\overline{D}(0,R)$ for the circle, the open disc and the close disc, respectively, centered at $0$ with radius $R\in (0,\infty)$ in the complex plane.
We study the denominator of the function  on the right-hand side of (\ref{eq:A0}).

\begin{lem}\label{lem:bdd-1}
For any $\vep\in [0,N)$, 
\begin{align}
\begin{split}\label{ineq:Mobm}
&\min_{\zeta\in C(0,1-\frac{\vep}{N})}
\left|\frac{1}{N}\tr\left(\frac{1-\zeta}{N-\zeta(N-2+2Q)}\right)\right|\\
&\hspace{4cm}=\frac{1}{N^2}\tr\Bigg(\frac{\vep}{2-2Q+\vep-\frac{\vep}{N}(2-2Q)}\Bigg),\\
\end{split}\\
\begin{split}
&\max_{\zeta\in C(0,1-\frac{\vep}{N})}
\left|\frac{1}{N}\tr\left(\frac{1-\zeta}{N-\zeta(N-2+2Q)}\right)\right|\\
&\hspace{4cm}= \frac{1}{N^2}\tr\Bigg(\frac{2-\frac{\vep}{N}}{
1+\big(1-\frac{\vep}{N}\big)\big(1-\frac{2}{N}+\frac{2Q}{N}\big)}\Bigg).\label{ineq:MobM}
\end{split}
\end{align}
Here, meromorphic functions are defined at their removable singularities in the natural way,  and the right-hand side of the equality in (\ref{ineq:Mobm}) is read as $1/N^2$ if $\vep=0$ (since $1$ is an eigenvalue of $Q$).
Moreover, in the above display, the minimum and the maximum are attained at $1-\frac{\vep}{N}$ and $-(1-\frac{\vep}{N})$, respectively. 
\end{lem}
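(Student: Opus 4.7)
The plan is to apply functional calculus to the symmetric matrix $Q$. Since the eigenvalues $q$ of $Q$ lie in $[-1,1]$, the numbers $\mu_q := (N-2+2q)/N$ lie in $[1-4/N,1]\subset(0,1]$ (using $N>8$). Setting $F_q(\zeta) := (1-\zeta)/(1-\zeta\mu_q)$, with the removable singularity $\mu_q=1$ defined by $F_q\equiv 1$, I would rewrite
\[
h(\zeta)\;:=\;\frac{1}{N}\tr\!\left(\frac{1-\zeta}{N-\zeta(N-2+2Q)}\right)\;=\;\frac{1}{N^{2}}\sum_{q}F_{q}(\zeta),
\]
so that the task reduces to locating the extrema of $|h|$ on the circle $|\zeta|=r$ for $r:=1-\vep/N$. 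The explicit right--hand sides of (\ref{ineq:Mobm}) and (\ref{ineq:MobM}) are nothing but $h(r)$ and $h(-r)$, respectively, which I would verify by substituting $1-r=\vep/N$ and $\mu_q=1-2/N+2q/N$ and clearing denominators; the degenerate case $\vep=0$ in (\ref{ineq:Mobm}) then picks up only the simple Perron eigenvalue $q=1$, giving $1/N^{2}$.

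For the upper bound I would differentiate $|F_q(re^{i\theta})|^{2}=(1-2r\cos\theta+r^{2})/(1-2r\mu_q\cos\theta+r^{2}\mu_q^{2})$ in $\theta$ and simplify, obtaining
\[
\frac{d}{d\theta}\,|F_q(re^{i\theta})|^{2}\;=\;\frac{2r\sin\theta\,(1-\mu_q)(1-r^{2}\mu_q)}{\bigl(1-2r\mu_q\cos\theta+r^{2}\mu_q^{2}\bigr)^{2}},
\]
whose sign is that of $\sin\theta$, since $\mu_q\le 1$ and $r\mu_q<1$. Hence $|F_q|$ attains its maximum on the circle at $\theta=\pi$, where its value is the positive number $F_q(-r)$. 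Summing and applying the triangle inequality gives
\[
|h(\zeta)|\;\le\;\frac{1}{N^{2}}\sum_{q}|F_{q}(\zeta)|\;\le\;\frac{1}{N^{2}}\sum_{q}F_{q}(-r)\;=\;h(-r),
\]
with equality at $\zeta=-r$; this is the maximum assertion.

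For the lower bound the triangle inequality runs the wrong way, and the key observation is to switch to the real part of $F_q$. A parallel quotient--rule computation, in which the $\cos\theta$ terms in the numerator cancel cleanly, simplifies to
\[
\frac{d}{d\theta}\,\Re F_q(re^{i\theta})\;=\;\frac{r\sin\theta\,(1-\mu_q)(1-r^{2}\mu_q^{2})}{\bigl(1-2r\mu_q\cos\theta+r^{2}\mu_q^{2}\bigr)^{2}}.
\]
Again the sign is that of $\sin\theta$, so $\Re F_q$ is minimized on the circle at $\theta=0$ with value $F_q(r)>0$. Summing over $q$ and using that $h(r)$ is then a positive real, one concludes
\[
|h(\zeta)|\;\ge\;\Re h(\zeta)\;=\;\frac{1}{N^{2}}\sum_{q}\Re F_{q}(\zeta)\;\ge\;h(r),
\]
with equality at $\zeta=r$, giving the minimum. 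The only conceptual obstacle here is precisely this lower bound: the triangle inequality can only deliver an upper bound, so one must replace $|h|$ by the smaller quantity $\Re h$ and exploit the clean one--variable monotonicity of each $\Re F_q$ in $\theta$.
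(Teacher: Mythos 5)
Your argument is correct, and it is in fact a touch more careful than the paper's at the decisive step. Both proofs expand $\frac{1}{N}\tr(\cdot)$ over the eigenvalues of $Q$ into $\frac{1}{N^{2}}\sum_{q}F_{q}(\zeta)$ and study each M\"obius image on the circle $|\zeta|=r$. The paper then asserts, in (\ref{Mq:rotat}), that $M_{q}\big(re^{i\theta}\big)=a_{q}-b_{q}e^{i\theta}$ for $\theta$-independent $a_{q},b_{q}>0$, so that the trace-average becomes literally $A-Be^{i\theta}$ and the extrema at $\theta=0,\pi$ are immediate. But (\ref{Mq:rotat}) as stated is too strong: matching the $e^{2i\theta}$ coefficient in the putative identity $1-re^{i\theta}=(a-be^{i\theta})\bigl(N-re^{i\theta}(N-2+2q)\bigr)$ forces $br(N-2+2q)=0$, hence $b=0$ once $q<1$; a M\"obius map sends a centered circle to a circle as a \emph{set}, but reparametrizes the angle nonlinearly, so the sum is not $A-Be^{i\theta}$. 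The two monotonicity facts you extract by differentiation---$|F_{q}(re^{i\theta})|^{2}$ increasing on $(0,\pi)$ via the sign of $(1-\mu_{q})(1-r^{2}\mu_{q})$, and $\Re F_{q}(re^{i\theta})$ increasing there via $(1-\mu_{q})(1-r^{2}\mu_{q}^{2})$---are exactly what the conclusion actually needs and hold without any parametrization identity. Your substitution of $\Re h$ for $|h|$ on the lower-bound side is indeed where the argument must diverge from the triangle-inequality upper bound, and your termwise sums are attained simultaneously at $\zeta=r$ and $\zeta=-r$, with the $\vep=0$ boundary case picking up only the simple eigenvalue $q=1$ as you note. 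In short: same decomposition and same answer, but your derivative computation supplies the pointwise monotonicity that the paper's appeal to (\ref{Mq:rotat}) does not actually deliver.
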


\begin{proof}
For every $q\in [-1,1]$, consider the M\"obius transformation 
\[
M_q(\zeta)\triangleq  \frac{1-\zeta}{N-\zeta(N-2+2q)}.
\]
Note that $M_1$ is just the constant map $1/N$.

Let us make some observations for $M_q$, when $q\in [-1,1)$.
By a standard result of M\"obius transformations, $M_q$ maps $C(0,1-\frac{\vep}{N})$ to a nondegenerate circle, say $C_q$, in $\Bbb C$ because it is nonconstant and analytic in an open set containing $\overline{D}(0,1)$.
The circle $C_q$ is symmetric about the real line because $M_q$ is defined by real coefficients, and plainly intersects the real line at 
\begin{align}\label{+1}
\displaystyle  M_q\left(1-\frac{\vep}{N}\right)=\frac{\vep/N}{2-2q+\vep-\frac{\vep}{N}(2-2q)}
\end{align}
and
\begin{align}\label{-1}
M_q\left(-1+\frac{\vep}{N}\right)=\frac{2-\frac{\vep}{N}}{
N\big[1+\big(1-\frac{\vep}{N}\big)\big(1-\frac{2}{N}+\frac{2q}{N}\big)\big]},
\end{align}
which are distinct strictly positive  real numbers. 
This means that $C_q$ is contained in the half plane $\{\zeta\in \Bbb C;\Re(\zeta)>0\}$. In addition, note that for each $q\in [-1,1)$, the value in (\ref{+1}) is strictly less than the value in (\ref{-1}), and $M_q(\sqrt{-1})$ has strictly negative imaginary part. We deduce that 
for $q\in [-1,1)$ and $\zeta=(1-\frac{\vep}{N})e^{\sqrt{-1}\theta}$ with $\theta\in [0,2\pi]$,
\begin{align}\label{Mq:rotat}
M_q(\zeta)=a_q-b_qe^{\sqrt{-1}\theta}
\end{align}
for some $a_q,b_q\in (0,\infty)$ independent of $\theta$. The foregoing equality holds trivially for $q=1$ if we set $a_1=1/N$ and $b_1=0$.

By the special form (\ref{Mq:rotat}) of $M_q$ for all $q\in [-1,1]$, the optimization problems in (\ref{ineq:Mobm}) and (\ref{ineq:MobM}) take the forms of minimizing and maximizing $|A-Be^{\sqrt{-1}\theta}|$ subject to $\theta\in [0,2\pi]$, for fixed $A,B\in (0,\infty)$. For the latter two, solutions are given by $\theta=0$ and $\theta=\pi$, respectively. 
Hence, the equalities in (\ref{ineq:Mobm}) and (\ref{ineq:MobM}) follow upon using (\ref{+1}) and (\ref{-1}). 
\end{proof}

\begin{rmk}
Note that \cite[Theorem 1]{P:ZPS} studies locations of zeros for complex functions taking the form $\zeta\lmt P(\zeta)+\zeta^k\int_{[0,1]}(1-\zeta q)^{-1}\mu(dq)$, which arises from the investigation of Riesz summability. Here, $P$ are polynomials of degree $k-1$ for $k\in \Bbb Z_+$ ($P(\zeta)\equiv 0$ if $k=0$) and $\mu$ is a finite measure on $[0,1]$.
The context in \cite[Theorem 1]{P:ZPS} overlaps in part the context of Lemma~\ref{lem:bdd-1}. \qed 
\end{rmk}

\paragraph{\bf Conclusion for the proof of Theorem~\ref{thmm:GH}.}
We have seen that (\ref{eq:A0}) holds when $\zeta,q\in \Bbb C$ satisfying (\ref{zetaq}), by Lemma~\ref{lem:solve}. By a standard result of several complex variables (see, e.g., H\"ormander~\cite[Theorem 2.2.1]{H:SCV}) and
 Lemma~\ref{lem:bdd-1}, the function in two complex variables on the right-hand side of (\ref{eq:A0}) is analytic in the open polydisc $D=\{(\zeta,q)\in \Bbb C\times \Bbb C;|\zeta|<1,|q|<1\}$. Hence, by Cauchy's inequalities for analytic functions in several complex variables \cite[Theorem~2.2.7]{H:SCV}, $\G\alpha(\zeta,q)$ converges absolutely on the polydisc $D$. Next, if we fix $\zeta\in \Bbb C$ such that $|\zeta|<1$ and repeat the above argument with respect to the single complex variable $q$, then the assertion of Theorem~\ref{thmm:GH} can be extended up to the boundary case $|q|=1$. The proof is complete. \qed

\section{Laplace transforms of first-order meeting times}\label{sec:lp}

\subsection{Meeting time distributions, voter correlations and approximating correlations}\label{sec:lap-1}
In this section, 
we derive an infinite series expression for the Laplace transform of $M_{U,V}$ in terms of the true correlations
\[
\frac{1}{N}\E[\langle \beta_u|L^n(J)|\beta_u\rangle]=\E_{\beta_u}[\xi_n(U)\xi_n(V_\infty)]
\]
(recall (\ref{eq:coupling})),
and compute the analogous series in terms of the approximating correlations $\frac{1}{N}\E[\langle \beta_u|L^n_0(J)|\beta_u\rangle]$.
The results of the present section will be applied in Section~\ref{sec:lap-2} for the proof of Theorem~\ref{thmm:main-1-0}.

From now on, we write $S_\xi(C)\equiv \la \xi|C|\xi\rangle$ for all deterministic configurations $\xi$, and in addition,
\begin{align}\label{def:Sbetau}
S_{\beta_u}(C)\equiv \E[\langle \beta_u|C|\beta_u\rangle],\quad C\in\mathsf M_E,
\end{align}
for the Bernoulli configurations $\beta_u=\sum_x \beta_u(x)|x\ra$ (recall that $\beta_u(x)$ are i.i.d. Bernoulli with mean $u$).

\begin{lem}\label{lem:lap-ac-1}
For every $\lambda\in (0,\infty)$ and $\xi\in \{1,0\}^E$,
\begin{align}\label{BuLn-2}
\begin{split}
&\frac{\lambda}{N+\lambda}\sum_{n=0}^\infty\left(\frac{N}{N+\lambda}\right)^n \frac{1}{N}S_\xi L_0^n(J)
=p_1(\xi)^2\\
&\hspace{4cm}+\frac{1}{N}S_\xi\left(\frac{2(I-Q)(\lambda+2-2Q)^{-1}}{ \lambda\tr\left(\frac{1}{\lambda+2-2Q}\right)}\right),
\end{split}
\end{align}
where the series on the left-hand side converges absolutely. 
\end{lem}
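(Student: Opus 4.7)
\medskip

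The plan is to obtain the identity by evaluating the matrix-valued generating function from Corollary~\ref{cor:L0R0-gen} at the specific point $\zeta=N/(N+\lambda)$, which lies in $(0,1)$ since $\lambda>0$. Absolute convergence of $\sum_{n\geq 0}\zeta^n L_0^n(J)$ with respect to the matrix norm in (\ref{eq:matrix-norm}) at this $\zeta$, guaranteed by Corollary~\ref{cor:L0R0-gen}, transfers to absolute convergence of the scalar series on the left-hand side of (\ref{BuLn-2}) after applying the linear functional $C\mapsto \frac{1}{N}S_\xi(C)$, since this functional is bounded on $(\mathsf M_E,\|\cdot\|)$.

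The main computation is an algebraic simplification. With $\zeta=N/(N+\lambda)$ one has $1-\zeta=\lambda/(N+\lambda)$ and
\begin{align*}
N-\zeta(N-2+2Q)=\frac{N(\lambda+2-2Q)}{N+\lambda}.
\end{align*}
Multiplying the identity in (\ref{eq:genL}) by $\lambda/(N+\lambda)$ and using $\lambda/(N+\lambda)\cdot 1/(1-\zeta)=1$, the first term collapses to $J$. For the second term, one substitutes the display above into $[N-\zeta(N-2+2Q)]^{-1}$ and into the trace, and all factors of $(N+\lambda)$ and $N$ cancel, leaving
\begin{align*}
\frac{\lambda}{N+\lambda}\,\mathscr G L_0(J)(\zeta)
=J+\frac{2(I-Q)(\lambda+2-2Q)^{-1}}{\lambda\,\tr\bigl(\frac{1}{\lambda+2-2Q}\bigr)}.
\end{align*}

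Applying $\frac{1}{N}S_\xi(\,\cdot\,)$ to both sides yields the desired identity, once we identify $\frac{1}{N}S_\xi(J)$ with $p_1(\xi)^2$. But this is immediate from the definition $J=\frac{1}{N}\sum_{x,y}|x\rangle\langle y|$, which gives
\begin{align*}
\frac{1}{N}\langle \xi|J|\xi\rangle=\frac{1}{N^2}\Bigl(\sum_{x\in E}\xi(x)\Bigr)^{\!2}=p_1(\xi)^2,
\end{align*}
since $p_1(\xi)$ denotes the fraction of ones in $\xi$. I do not expect any real obstacle here; the whole argument is a specialization-and-simplification of the generating-function formula already established in Corollary~\ref{cor:L0R0-gen}. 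The only point requiring mild care is the passage of absolute convergence from the matrix norm to the scalar series, but this follows at once from $|S_\xi(C)|\leq\|C\|$ because $\xi\in\{0,1\}^E$.
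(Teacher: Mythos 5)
Your proof is correct and follows essentially the same route as the paper: both substitute $\zeta = N/(N+\lambda)$ into the generating-function formula from Corollary~\ref{cor:L0R0-gen}, apply $\frac{\lambda}{N+\lambda}\cdot\frac{1}{N}S_\xi$, and identify $\frac{1}{N}S_\xi(J)=p_1(\xi)^2$. You simply spell out the algebraic cancellation in slightly more detail and make explicit the (correct) bound $|S_\xi(C)|\le\|C\|$ transferring absolute convergence to the scalar series.
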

\begin{proof}
We apply $\frac{\lambda}{N+\lambda}\times \frac{1}{N}S_\xi$ to both sides of (\ref{eq:genL}) with $\zeta$ set to be $N/(N+\lambda)$. Then notice that
\begin{align*}
\frac{1}{N}S_\xi(J)=\frac{\langle \xi|\1\rangle \langle \1|\xi\rangle}{N^2}=p_1(\xi)^2
\end{align*}
and the second term on the right-hand side of (\ref{eq:genL}) with $\zeta$ set to be $N/(N+\lambda)$ becomes
\begin{align*}
\frac{2(N+\lambda) (I-Q)(\lambda+2-2Q)^{-1}}{\lambda^2\tr\left(\frac{1}{\lambda+2-2Q}\right)} .
\end{align*}
The equality (\ref{BuLn-2}) now follows plainly.
\end{proof}

Next, we relate the Laplace transform of $M_{U,V}$ to the generating function of $\frac{1}{N}S_{\beta_u}L_0^n(J)$ (in $n$) in the following lemma. We define the mean local density $p_{10}(\xi)$ of configuration $\xi$ by 
\begin{align}
p_{10}(\xi)\triangleq \sum_{x,y\in E}\frac{1}{N}\langle x|Q|y\rangle \xi(x)\xi(y)= \frac{1}{N}\langle \xi|Q|\xi\rangle.\label{def:p10}
\end{align}

\begin{lem}\label{lem:MUV-exp}
\begin{enumerate}
\item [(\rm i)] For every $\xi\in \{1,0\}^E$ and $n\in \Bbb Z_+$,
\[
\frac{1}{N}S_\xi L^{n+1}(J)-\frac{1}{N}S_\xi L^n(J)=\frac{2}{N^2}\E_\xi[p_{10}(\xi_n)].
\]

\item [\rm (ii)]For every $\lambda\in (0,\infty)$ and $u\in (0,1)$,
\begin{align*}
\begin{split}
&\E\big[e^{-\lambda M_{U,V}}\big]\\
&=1-\frac{\lambda N^2}{2u(1-u)(N+\lambda)}\sum_{n=0}^\infty \left(\frac{N}{N+\lambda}\right)^n\left(\frac{1}{N}S_{\beta_u}L^{n+1}(J)-\frac{1}{N}S_{\beta_u}L^n(J)  \right),
\end{split}
\end{align*}
where the series on the right-hand side converges absolutely.

\item [\rm (iii)] For every $\lambda\in (0,\infty)$ and $u\in (0,1)$,
\begin{align*}
\begin{split}
&\left.\E\left[\int_0^\infty e^{-\lambda t/2}\1_{\{V\}}(X^U_t)dt\right]\right/\Bbb E\left[\int_0^\infty e^{-\lambda t/2}\1_{\{V\}}(X^V_t)dt\right]\\
&=1-\frac{\lambda N^2}{2u(1-u)(N+\lambda)}\sum_{n=0}^\infty \left(\frac{N}{N+\lambda}\right)^n\left(\frac{1}{N}S_{\beta_u}L_0^{n+1}(J)-\frac{1}{N}S_{\beta_u}L_0^n(J)\right),
\end{split}
\end{align*}
where the series on the right-hand side converges absolutely.
\end{enumerate}
\end{lem}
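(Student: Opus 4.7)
My overall plan is to telescope each of the three series through the single identity
\[
L^{n+1}(J)-L^{n}(J)=L^{n}\bigl(L(J)-J\bigr)=\tfrac{2}{N^{2}}L^{n}(I-Q),
\]
which follows from (\ref{eq:itPi}) and linearity of $L^{n}$.

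For (i), I would combine this with the coupling argument behind (\ref{eq:coupling}): the same computation shows $\la\xi|L^{n}(C)|\xi\ra=\E_{\xi}[\la\xi_{n}|C|\xi_{n}\ra]$ for any $C\in\mathsf M_{E}$ and any deterministic $\xi\in\{0,1\}^{E}$, since $T_{n}\cdots T_{1}\xi=\xi_{n}$ and the $T_{i}$'s are independent of $\xi_{0}$. Taking $C=I-Q$ and using $\xi_{n}(x)^{2}=\xi_{n}(x)$ evaluates $\la\xi_{n}|(I-Q)|\xi_{n}\ra$ as $N(\bar\xi_{n}-p_{10}(\xi_{n}))$, which after multiplication by $\tfrac{2}{N^{3}}$ matches the right-hand side of (i) (using also the martingale identity $\E_{\xi}[\bar\xi_{n}]=\bar\xi$ to absorb the mean density). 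For (ii), I would then apply the duality (\ref{eq:MTVM}) at $x=U$, $y=V$: marginalising over the uniform $U$ and $V\mid U\sim Q(U,\cdot)$ turns $\E_{\beta_{u}}[\xi_{t}(U)\xi_{t}(V)]$ into $\E_{\beta_{u}}[p_{10}(\xi_{t})]$. Taking a Laplace transform in $t$ and Poisson-time-changing $\xi_{t}=\xi_{N_{t}}$ through the rate-$N$ clock converts the $t$-integral into a geometric series in $n$ with ratio $N/(N+\lambda)$; substituting (i) to rewrite $\E_{\beta_{u}}[p_{10}(\xi_{n})]$ in terms of $h_{n+1}-h_{n}$ with $h_{n}=\tfrac{1}{N}S_{\beta_{u}}L^{n}(J)$ and collecting the constants $u/\lambda$ and $u/(1-u)$ gives the stated $1-\cdots$ form. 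Absolute convergence follows from $\vertiii{L}=1$ (Proposition~\ref{prop:norm}(i)).

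The hard part is (iii), where I cannot reuse (i) because $L_{0}$ has no voter-model meaning; the plan is instead to evaluate the series purely algebraically via Corollary~\ref{cor:L0R0-gen}. Abel summation at $\zeta=N/(N+\lambda)$ gives
\[
\sum_{n}\zeta^{n}(h_{n+1}-h_{n})=\tfrac{\lambda}{N}G(\zeta)-\tfrac{N+\lambda}{N}h_{0},
\]
with $h_{n}=\tfrac{1}{N}S_{\beta_{u}}L_{0}^{n}(J)$ and $G(\zeta)=\tfrac{1}{N}S_{\beta_{u}}\G L_{0}(J)(\zeta)$ read off Corollary~\ref{cor:L0R0-gen}. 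Using the Bernoulli covariance $\E[\la\beta_{u}|A|\beta_{u}\ra]=u^{2}\la\mathbf{1}|A|\mathbf{1}\ra+u(1-u)\tr(A)$ together with $(I-Q)\mathbf{1}=0$ (equivalently $(\lambda+2-2Q)^{-1}\mathbf{1}=\lambda^{-1}\mathbf{1}$), the non-$J$ part of $L_{0}^{n}(J)$ contributes only its trace---its $u^{2}$-piece vanishes---while the $J$-part of $G(\zeta)$ cancels exactly against $h_{0}$. What survives is $\tfrac{2(N+\lambda)u(1-u)}{N^{2}\lambda}\cdot\tr((I-Q)/(\lambda+2-2Q))/\tr(1/(\lambda+2-2Q))$; multiplying by the prefactor $\lambda N^{2}/[2u(1-u)(N+\lambda)]$ and subtracting from $1$ yields $\tr(Q/(\lambda+2-2Q))/\tr(1/(\lambda+2-2Q))$, which by (\ref{ratio}) is the desired ratio of Green functions. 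The main obstacle is bookkeeping these algebraic cancellations; the probabilistic telescoping used in (ii) is replaced in (iii) by an algebraic collapse enabled by $(I-Q)\mathbf{1}=0$ annihilating all the $u^{2}$-pieces at once.
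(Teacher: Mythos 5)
Your overall route — telescoping via $L^{n+1}(J)-L^{n}(J)=\tfrac{2}{N^{2}}L^{n}(I-Q)$ and the coupling identity $\la\xi|L^{n}(C)|\xi\ra=\E_{\xi}[\la\xi_{n}|C|\xi_{n}\ra]$ for (i), duality plus Poissonization for (ii), and evaluating the $L_0$-generating function from Corollary~\ref{cor:L0R0-gen} at $\zeta=N/(N+\lambda)$ with the Bernoulli covariance for (iii) — is the same as the paper's, except that you derive (i) self-containedly whereas the paper cites \cite[Proposition~3.1 (iii)]{CCC:WF}. Your Abel-summation bookkeeping for (iii) and the observation that $(I-Q)\mathbf 1=0$ kills the $u^{2}$-piece and that the $J$-part cancels $h_0$ are exactly the algebraic collapse the paper carries out via Lemma~\ref{lem:lap-ac-1} and (\ref{Sbetau-0}).

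There is, however, a genuine inconsistency in your verification of (i) that you should not gloss over. With the literal formula (\ref{def:p10}), namely $p_{10}(\xi)=\tfrac1N\la\xi|Q|\xi\ra$, your computation gives
\[
\frac{1}{N}S_\xi L^{n+1}(J)-\frac{1}{N}S_\xi L^{n}(J)
=\frac{2}{N^{3}}\,\E_\xi\la\xi_n|(I-Q)|\xi_n\ra
=\frac{2}{N^{2}}\,\E_\xi\!\left[p_1(\xi_n)-p_{10}(\xi_n)\right],
\]
with $p_1(\xi)=\tfrac1N\sum_x\xi(x)$. The density martingale identity $\E_\xi[p_1(\xi_n)]=p_1(\xi)$ does not convert this into $\tfrac{2}{N^{2}}\E_\xi[p_{10}(\xi_n)]$; the two expressions disagree already at $n=0$, $\xi\equiv 1$ (one is $0$, the other $2/N^{2}$). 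The resolution is that (\ref{def:p10}) contains a typo: for the duality step in (ii) to run, $p_{10}$ must be the discordant-pair density $\sum_{x,y}\tfrac1N\la x|Q|y\ra\,\xi(x)\bigl(1-\xi(y)\bigr)=p_1(\xi)-\tfrac1N\la\xi|Q|\xi\ra$, under which your computation and the statement of (i) agree. Say this explicitly rather than appealing vaguely to the martingale identity to ``absorb the mean density''; as written, the sentence asserts an equality that is false under the printed definition.
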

\begin{proof}
The proof of (i) follows immediately from the fact: for every $\xi\in \{1,0\}^E$ and $n\in \Bbb Z_+$,
\begin{align}\label{p1:timeevolution}
\frac{1}{N}S_{\xi}L^n(J)=\frac{1}{N}S_\xi(J)+\frac{2}{N^2}\sum_{j=0}^{n-1}\E_\xi[p_{10}(\xi_j)],
\end{align}
where $p_{10}(\xi)$ is defined by (\ref{def:p10}).
The above equation is implicit in the proof of \cite[Proposition~3.1 (iii)]{CCC:WF} since $\frac{1}{N}S_\xi L^n(J)\equiv \E_\xi[\xi_n(U)\xi_n(V_\infty)]$.

To prove (ii), we resort to the duality equation (\ref{eq:MTVM})
and use the fact that $(\xi_t)$ is equal to $(\xi_n)$ time-changed by an independent Poisson process with rate $N$. We get
\begin{align*}
1-\E[e^{-\lambda M_{U,V}}]&=\int_0^\infty \lambda e^{-\lambda s}\frac{\E_{\beta_u}[p_{10}(\xi_s)]}{u(1-u)}ds\\
&\hspace{-2cm}=\frac{1}{u(1-u)}\frac{\lambda}{N+\lambda}\sum_{n=0}^\infty \left(\frac{N}{N+\lambda}\right)^n\E_{\beta_u}[p_{10}(\xi_n)]\\
&\hspace{-2cm}=\frac{\lambda N^2}{2u(1-u)(N+\lambda)}\sum_{n=0}^\infty \left(\frac{N}{N+\lambda}\right)^n \left(\frac{1}{N}S_{\beta_u}L^{n+1}(J)-\frac{1}{N}S_{\beta_u}L^n(J)\right),
\end{align*}
where the last equality follows from (i).

It remains to prove (iii). On one hand, we note that
\begin{align*}
&\left.\E\left[\int_0^\infty e^{-t\lambda/2}\1_{\{V\}}(X^{U}_t)dt\right]\right/ \E\left[\int_0^\infty e^{-\lambda t/2}\1_{\{V\}}(X^{V}_t)dt\right]\\
&= \left.\frac{1}{N}\sum_{x,y\in E}\langle x|Q|y\rangle \left\langle x\left|\int_0^\infty e^{-t\lambda/2+t(Q-I)}dt\right|y\right\rangle\right/\frac{1}{N}\sum_{y\in E}\left\langle y\left|
\int_0^\infty e^{-t\lambda/2+t(I-Q)}dt
\right|y\right\rangle\\
&=\left.\frac{1}{N}\tr\Bigg(\frac{Q}{\frac{\lambda}{2}+(I-Q)}\Bigg)\right/\frac{1}{N}\tr\Bigg(\frac{1}{\frac{\lambda}{2}+(I-Q)}\Bigg)\\
&=\left.\tr\Bigg(\frac{Q}{\lambda+2(I-Q)}\Bigg)\right/\tr\Bigg(\frac{1}{\lambda+2(I-Q)}\Bigg),
\end{align*}
and so (iii) follows if we can show that
\begin{align*}
&\left.\tr\Bigg(\frac{Q}{\lambda+2(I-Q)}\Bigg)\right/\tr\Bigg(\frac{1}{\lambda+2(I-Q)}\Bigg)\\
=&1-\frac{\lambda N^2}{2u(1-u)(N+\lambda)}\sum_{n=0}^\infty \left(\frac{N}{N+\lambda}\right)^n\left(\frac{1}{N}S_{\beta_u}L_0^{n+1}(J)-\frac{1}{N}S_{\beta_u}L_0^n(J)\right).
\end{align*}
The foregoing equality, however, is a consequence of (\ref{BuLn-2}) if we apply a randomization by $\beta_u$ to both sides of (\ref{BuLn-2}) and notice \begin{align}
\begin{split}\label{Sbetau-0}
&\frac{1}{N}S_{\beta_u}\Bigg(\frac{2(I-Q)(\lambda+2-2Q)^{-1}}{\lambda \tr\left(\frac{1}{\lambda+2-2Q}\right)}\Bigg)\\
&\hspace{1.8cm}=\left.\left[2(u-u^2)\tr\left(\frac{1-Q}{\lambda+2-2Q}\right)\right]\right/\left[\lambda N \tr\left(\frac{1}{\lambda+2-2Q}\right)\right].
\end{split}
\end{align}

To see (\ref{Sbetau-0}), we first compute
\begin{align}
S_{\beta_u}(Q^s)&=\E\left[\sum_{x,y\in E}\beta(x)\langle x|Q^s|y\rangle\beta(y)\right]\notag\\
&=\sum_{\stackrel{\scriptstyle  x,y\in E}{x\neq y}}\langle x|Q^s|y\rangle u^2+\sum_{x\in E}\langle x|Q^s|x\rangle u\notag\\
&=u^2N+(u-u^2)\tr(Q^s),\quad \forall\;s\in \Bbb Z_+,\label{Sbetau}
\end{align}
and similarly, $S_{\beta_u}(J)=(N-1)u^2+u$. Hence, we get
\begin{align}\label{SSSSS}
S_{\beta_u}\big(Q^s\1_{(-\infty,1)(Q)}\big)
&=S_{\beta_u}\left(Q^s-J\right)
=(u-u^2)\tr\big(Q^s;Q<1\big).
\end{align}
Here, we write 
\begin{align}\label{eq:trace-set}
\tr\big(f(Q);Q\in A\big)\triangleq \tr\big(f(Q)\1_{A}(Q)\big),\quad A\subseteq \R.
\end{align}
Then by polynomial approximation, (\ref{SSSSS}) implies
\begin{align}
S_{\beta_u}\big((I-Q)(\lambda+2-2Q)^{-1}\big)&=S_{\beta_u}\big((I-Q)(\lambda+2-2Q)^{-1}\1_{(-\infty,1)}(Q)\big)\notag\\
&=(u-u^2)
\tr\left(\frac{1-Q}{\lambda+2-2Q};Q<1\right)\notag\\
&=(u-u^2)
\tr\left(\frac{1-Q}{\lambda+2-2Q}\right).\label{Subeta-2}
\end{align}
The equality (\ref{Subeta-2}) is enough to get (\ref{Sbetau-0}), and the proof is complete.
\end{proof}

\begin{cor}\label{cor:MUV-wr}
Suppose that $Q$ is walk-regular. Then
\begin{align}
\begin{split}
&\E[e^{-\lambda M_{U,V}}]\\
&\hspace{.5cm}=\left.\E\left[\int_0^\infty e^{-\lambda t/2}\1_{\{V\}}(X^U_t)dt\right]\right/
\E\left[\int_0^\infty e^{-\lambda t/2}\1_{\{V\}}(X^V_t)dt\right].\label{MUV-1}
\end{split}
\end{align}
\end{cor}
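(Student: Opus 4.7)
The plan is to exploit the two infinite-series representations already established in Lemma~\ref{lem:MUV-exp} (ii) and (iii), which express the Laplace transform $\E[e^{-\lambda M_{U,V}}]$ and the ratio of Green functions, respectively, as identical affine functionals of the sequences $\{S_{\beta_u}L^n(J)\}_{n\geq 0}$ and $\{S_{\beta_u}L_0^n(J)\}_{n\geq 0}$. Thus the only thing to verify in the walk-regular case is that these two sequences of matrices coincide.

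First I would invoke Lemma~\ref{lem:alpha}, specifically equation~(\ref{eq:Ln}), which asserts that when $Q$ is walk-regular, the iterates satisfy
\begin{equation*}
L^n(J) = L_0^n(J) = J + \sum_{s=0}^\infty \alpha(n,s)Q^s \quad \forall\; n\in \Bbb N,
\end{equation*}
with the convention that $L^0(J) = L_0^0(J) = J$ (which is covered by $\alpha(0,\cdot)\equiv 0$). Applying the functional $\frac{1}{N}S_{\beta_u}$ then yields
\begin{equation*}
\frac{1}{N}S_{\beta_u}L^n(J) = \frac{1}{N}S_{\beta_u}L_0^n(J), \quad \forall\; n\in \Bbb Z_+.
\end{equation*}

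Next, plugging this identity into the series representation of $\E[e^{-\lambda M_{U,V}}]$ from Lemma~\ref{lem:MUV-exp}(ii) and comparing term by term with the series representation of the Green function ratio from Lemma~\ref{lem:MUV-exp}(iii) gives the desired equality~(\ref{MUV-1}). Both series converge absolutely by the same lemma, so the identification is legitimate without any further analytic justification.

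There is essentially no obstacle here: the heavy lifting has already been done in Lemma~\ref{lem:alpha} (which produces the explicit common expression for $L^n(J)$ and $L_0^n(J)$ under walk-regularity) and in Lemma~\ref{lem:MUV-exp} (which converts both sides of~(\ref{MUV-1}) into the same functional of the iterates applied to $J$). The corollary is thus a direct syntactic consequence of collating these two lemmas.
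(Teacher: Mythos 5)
Your proposal is correct and follows essentially the same route as the paper: invoke Lemma~\ref{lem:alpha}, equation~(\ref{eq:Ln}), to identify $L^n(J)$ with $L_0^n(J)$ under walk-regularity, apply $\frac{1}{N}S_{\beta_u}$, and then compare the two series representations in Lemma~\ref{lem:MUV-exp}(ii) and (iii) term by term. No gaps.
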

\begin{proof}
Equation (\ref{MUV-1}) follows at once if we compare (ii) and (iii) of Lemma~\ref{lem:MUV-exp}, since
\[
\frac{1}{N}S_{\beta_u}L^n(J)=\frac{1}{N}S_{\beta_u}L_0^n(J),\quad \forall\;n\in \Bbb Z_+,
\]
by (\ref{eq:Ln}) in Lemma~\ref{lem:alpha}.
The proof is complete.
\end{proof}

\subsection{Approximation}\label{sec:lap-2} 
This section is devoted to the proof of Theorem~\ref{thmm:main-1-0}. 
Recall the notation $S_\xi$ and $S_{\beta_u}$ defined at the beginning of Section~\ref{sec:lap-1}. To prove Theorem~\ref{thmm:main-1}, it is enough to bound
\begin{align}
\begin{split}\label{T}
&\frac{\lambda N^2}{2u(1-u)(N+\lambda)}
\sum_{n=0}^\infty \left(\frac{N}{N+\lambda}\right)^n\left(\frac{1}{N}S_{\beta_u}L^{n+1}(J)-\frac{1}{N}S_{\beta_u}L^n(J)\right)\\
&-\frac{\lambda N^2}{2u(1-u)(N+\lambda)}\sum_{n=0}^\infty \left(\frac{N}{N+\lambda}\right)^n\left(\frac{1}{N}S_{\beta_u}L_0^{n+1}(J)-\frac{1}{N}S_{\beta_u}L_0^n(J)\right)
\end{split}
\end{align}
by Lemma~\ref{lem:MUV-exp} (ii) and (iii).
For fixed $m\in \Bbb N$,
we will handle separately and in order the objects: (1) the partial sum of the first term in (\ref{T}) for $n$ ranging from $mN+1$ to $\infty$,
(2) the partial sum of the second term in (\ref{T}) for $n$ ranging from $mN+1$ to $\infty$, and (3) the difference in (\ref{T}) with the upper limits $\infty$ of the two series replaced with $mN$.

The first object described above is easy to deal with.

\begin{lem}\label{lem:T1}
For every $\lambda\in(0,\infty)$, $u\in (0,1)$ and $m\in \Bbb N$,
\begin{align*}
&\left|
\frac{\lambda N^2}{2u(1-u)(N+\lambda)}
\sum_{n=mN+1}^{\infty}\left(\frac{N}{N+\lambda}\right)^n\left(\frac{1}{N}S_{\beta_u}L^{n+1}(J)-\frac{1}{N}S_{\beta_u}L^n(J)\right)\right|\\
&\hspace{8cm}\leq \frac{1}{u(1-u)}\left(\frac{N}{N+\lambda}\right)^{mN}.
\end{align*}
\end{lem}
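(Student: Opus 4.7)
The plan is to reduce the tail sum to a geometric series by using the probabilistic interpretation of successive differences of $\frac{1}{N}S_{\beta_u}L^n(J)$ given in Lemma~\ref{lem:MUV-exp}(i), together with the trivial bound $p_{10}\leq 1$.

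First I would invoke Lemma~\ref{lem:MUV-exp}(i) to rewrite each summand as
\[
\frac{1}{N}S_{\beta_u}L^{n+1}(J)-\frac{1}{N}S_{\beta_u}L^n(J)=\frac{2}{N^2}\E_{\beta_u}[p_{10}(\xi_n)],
\]
which is in particular nonnegative. Next I would observe that for any configuration $\xi\in\{1,0\}^E$ one has $0\leq p_{10}(\xi)\leq 1$: the lower bound is obvious from the defining sum (\ref{def:p10}), while the upper bound follows because $Q$ is stochastic, so $\sum_{x,y}\frac{1}{N}\langle x|Q|y\rangle=1$ and each $\xi(x)\xi(y)\in\{0,1\}$. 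In particular $\E_{\beta_u}[p_{10}(\xi_n)]\leq 1$, so every increment is bounded above by $2/N^2$.

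With these two ingredients, I would bound the absolute value of the tail sum by
\[
\frac{\lambda N^2}{2u(1-u)(N+\lambda)}\sum_{n=mN+1}^{\infty}\left(\frac{N}{N+\lambda}\right)^n\cdot\frac{2}{N^2}
=\frac{\lambda}{u(1-u)(N+\lambda)}\sum_{n=mN+1}^{\infty}\left(\frac{N}{N+\lambda}\right)^n,
\]
and then evaluate the geometric series:
\[
\sum_{n=mN+1}^{\infty}\left(\frac{N}{N+\lambda}\right)^n=\left(\frac{N}{N+\lambda}\right)^{mN+1}\cdot\frac{N+\lambda}{\lambda}.
\]
Substituting back, the factors of $\lambda$ and $N+\lambda$ cancel and the resulting expression is $\frac{1}{u(1-u)}\left(\frac{N}{N+\lambda}\right)^{mN+1}$, which is dominated by $\frac{1}{u(1-u)}\left(\frac{N}{N+\lambda}\right)^{mN}$, giving the claimed estimate.

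There is no real obstacle here; the only thing to be careful about is invoking (i) of Lemma~\ref{lem:MUV-exp} correctly (so that the summands are manifestly nonnegative and the absolute value simply passes inside), and noting that the geometric series has a closed form that cancels exactly the prefactor $\lambda N^2/[2u(1-u)(N+\lambda)]$ up to the harmless factor $N/(N+\lambda)\leq 1$ that we drop at the end.
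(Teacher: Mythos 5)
Your proposal is correct and matches the paper's own argument: both invoke Lemma~\ref{lem:MUV-exp}(i) to identify the increments with $\frac{2}{N^2}\E_{\beta_u}[p_{10}(\xi_n)]\in[0,2/N^2]$, and then sum the geometric tail. The only cosmetic difference is that you keep the extra factor $N/(N+\lambda)$ from starting at $n=mN+1$ and then drop it, whereas the paper bounds the slightly larger tail starting at $n=mN$ directly.
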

\begin{proof}
By Lemma~\ref{lem:MUV-exp} (i), 
\[
\frac{N^2}{2}\left(\frac{1}{N}S_{\beta_u}L^{n+1}(J)-\frac{1}{N}S_{\beta_u}L^n(J)\right)=\E_{\beta_u}[p_{10}(\xi_n)]\in [0,1].
\]
Hence,
\begin{align*}
&\left|\frac{\lambda N^2}{2u(1-u)(N+\lambda)}\sum_{n=mN}^\infty \left(\frac{N}{N+\lambda}\right)^n\left(\frac{1}{N}S_{\beta_u}L^{n+1}(J)-\frac{1}{N}S_{\beta_u}L^n(J)\right)\right|\\
&\hspace{.5cm}\leq \frac{\lambda}{u(1-u)(N+\lambda)}\sum_{n=mN}^\infty 
\left(\frac{N}{N+\lambda}\right)^n=\frac{1}{u(1-u)}\left(\frac{N}{N+\lambda}\right)^{mN},
\end{align*}
as required.
\end{proof}

To bound the second object described below (\ref{T}), we turn to the asymptotics of the discrete function $\alpha$ defined in Lemma~\ref{lem:alpha}.

\begin{lem}\label{lem:alpha-growth}
For every $\vep\in (0,1]$ and $n\in \Bbb N$, we have
\begin{align}
&|\alpha(n,0)|\leq \frac{(\vep^{-1}+4\sqrt{2})}{\pi}\times \frac{4+\vep}{\vep} \times\frac{1}{N}\left(\frac{N}{N-\vep}\right)^{\max\{n-1,0\}}, \label{alpha0-exp-bdd}\\
&|\alpha(n,1)|\leq \frac{9}{\pi}\times \frac{4+\vep}{\vep} \times\frac{1}{N}\left(\frac{N}{N-\vep}\right)^{\max\{n-1,0\}},\label{alpha1-exp-bdd}\\
\begin{split}
&\sup_{q\in [-1,1]}\left|\sum_{s=2}^\infty q^s\alpha(n,s)\right|\leq \frac{1}{\pi}\left(\vep^{-1}+\frac{16\sqrt{2}}{(1-\cos 1)^{1/2}}\right)\times \frac{4+\vep}{\vep}\\
&\hspace{6cm}\times\frac{1}{N}\left(\frac{N}{N-\vep}\right)^{\max\{n-2,0\}}.\label{alpha-exp-bdd}
\end{split}
\end{align}
\end{lem}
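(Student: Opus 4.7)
My plan is to extract the coefficients $\alpha(n,0)$, $\alpha(n,1)$, and $\sum_{s\geq 2}q^s\alpha(n,s)$ from their explicit generating functions (equations (\ref{alpha0-gen})--(\ref{alpha-gen}) of Lemma~\ref{lem:solve}) via Cauchy's integral formula on the circle $|\zeta|=R:=1-\vep/N$, which lies strictly inside the radius of convergence $N/(N+6)$ established in Lemma~\ref{lem:bdd-0}. Each such generating function has the schematic form $f(\zeta)=-2\zeta\cdot P(\zeta)/[(1-\zeta)[N-\zeta(N-2)]^2 T(\zeta)]$, where $T(\zeta)=\tr\big([N-\zeta(N-2+2Q)]^{-1}\big)$ and $P(\zeta)$ is an elementary rational factor (trivial for $\alpha(n,1)$; equal to $-[N-\zeta(N-2)]/[N(1-\zeta)]$ for $\alpha(n,0)$; proportional to $\zeta q^2/[N-\zeta(N-2+2q)]$ for the sum). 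The extra factor of $\zeta$ in every numerator, and $\zeta^2$ for the $s\geq 2$ case, shift the exponent in Cauchy's formula by $1$ (respectively $2$), matching precisely the exponents $R^{-(n-1)}$ (respectively $R^{-(n-2)}$) in the claimed bounds.

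The core analytic ingredient is the parametrization, implicit in the proof of Lemma~\ref{lem:bdd-1}, that on the contour $\zeta=Re^{i\theta}$ one has
\[
(1-\zeta)T(\zeta)=A-Be^{i\theta},\qquad A,B\geq 0,
\]
with $A-B=\tr\big(\vep/[(1-\vep/N)(2-2Q)+\vep]\big)/N\geq 1/N$ (the floor coming from the trivial eigenvalue $q=1$) and $A+B=\tr\big((2-\vep/N)/[1+(1-\vep/N)(1-(2-2Q)/N)]\big)/N$. Combined with the elementary lower bounds $|N-\zeta(N-2)|\geq 2+\vep(1-2/N)$ on the contour and (for the third estimate) $|N-\zeta(N-2+2q)|\geq \vep$ on $|q|\leq 1$ obtained by the same Möbius computation as in Lemma~\ref{lem:bdd-1}, this controls the modulus of the integrand from below.

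To pass from these pointwise estimates to the stated constants, I would use the residue-based identity
\[
\int_0^{2\pi}\frac{d\theta}{|A-Be^{i\theta}|^2}=\frac{2\pi}{A^2-B^2},
\]
(computable by substituting $z=e^{i\theta}$ and taking the residue at the interior pole $z=B/A$), together with Cauchy--Schwarz to bound $\int d\theta/|A-Be^{i\theta}|\leq 2\pi/\sqrt{A^2-B^2}\leq 2\pi\sqrt{N/(A+B)}$. For the third bound, I would further split the contour into $|\theta|<1$ (where this $L^2$ identity is used) and $|\theta|\geq 1$ (where the uniform lower bound $1-\cos\theta\geq 1-\cos 1$ gives $|A-Be^{i\theta}|\geq\sqrt{2AB(1-\cos 1)}$), which accounts for the $(1-\cos 1)^{-1/2}$ factor in (\ref{alpha-exp-bdd}). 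The $\vep^{-1}$ contributions in (\ref{alpha0-exp-bdd}) and (\ref{alpha-exp-bdd}) come from tracking the contribution of the pointwise behavior of $f_s$ near $\zeta=R$, where $|N-\zeta(N-2+2)|$ collapses to $\vep$; the $(4+\vep)/\vep$ amplification corresponds to the crude $L^\infty$ bound when $T(R)$ is close to its floor $1/\vep$.

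The step I expect to be delicate is producing the $1/N$ prefactor together with the sharp exponential rate uniformly in the spectrum of $Q$: a pointwise bound $\|f_s\|_{L^\infty(|\zeta|=R)}$ can be as large as $O(N)$ (achieved in the extremal case where only the trivial eigenvalue is near $1$), which would not suffice. The $L^2$ identity above is what makes the argument work, since it exploits the fact that $|A-Be^{i\theta}|$ is small only on an arc of angular width $O(1/N)$ around $\theta=0$. A secondary bookkeeping issue is to verify that $T(\zeta)$ has no zeros in $|\zeta|\leq R$ so that Cauchy's formula genuinely applies; this follows from $|(1-\zeta)T(\zeta)|\geq 1/N$ of Lemma~\ref{lem:bdd-1} together with the fact that $T(\zeta)$'s poles lie at $\zeta=N/(N-2+2q)\geq 1$ outside the contour.
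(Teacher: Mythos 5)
Your overall outline is correct: the paper also extracts $\alpha(n,0)$, $\alpha(n,1)$ and $\sum_{s\geq 2}q^s\alpha(n,s)$ by Cauchy's formula on the circle of radius $R=1-\vep/N$, the $\zeta$ (and $\zeta^2$) shifts yield the right exponents, and the M\"obius parametrization $(1-\zeta)\tr(\cdots)=A-Be^{\sqrt{-1}\theta}$ with $A,B\geq 0$ is exactly what Lemma~\ref{lem:bdd-1} is for.

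The analytic core, however, has a genuine gap. Take $\alpha(n,1)$: by (\ref{alpha1-gen}), after absorbing $(1-\zeta)$ into the trace the Cauchy integrand is (up to $R^{-(n-1)}/(2\pi)$)
\[
\frac{2}{\big|N-Re^{\sqrt{-1}\theta}(N-2)\big|^2\,\big|A-Be^{\sqrt{-1}\theta}\big|}.
\]
You propose to use the uniform bound $|N-\zeta(N-2)|\geq 2+\vep(1-2/N)$ together with the $L^2$ residue identity and Cauchy--Schwarz for the $|A-Be^{\sqrt{-1}\theta}|$ factor. But with a constant lower bound on $|N-\zeta(N-2)|$, the $\theta$-integral of the remaining factor is $\int d\theta/|A-Be^{\sqrt{-1}\theta}|\lesssim 2\pi/\sqrt{A^2-B^2}$, which (using either of your two floors for $A-B$) is at best $O(1)$, while the factor $1/N$ normalizing the trace cancels against the $N$ that comes from turning $\tr$ into $\frac{1}{N}\tr$. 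You end up with $|\alpha(n,1)|\lesssim R^{-(n-1)}$, missing the $1/N$ prefactor entirely. (Even if you Cauchy--Schwarz the whole product into $\int|N-\zeta(N-2)|^{-4}$ and $\int|A-Be^{\sqrt{-1}\theta}|^{-2}$, you get $O(1/\sqrt{N})$, not $O(1/N)$: the singularities of both factors are concentrated on the same $O(1/N)$ arc, and H\"older distributes them suboptimally.) What the paper actually does is split the contour at $|\arg z|=1/N$ (arcs $\Gamma_1$, $\Gamma_2$ in (\ref{Gamma1})--(\ref{Gamma2})): on $\Gamma_1$ it uses the pointwise floors and arc length $O(1/N)$; on $\Gamma_2$ it uses $|1-ae^{\sqrt{-1}\theta}|\geq\sqrt{2a(1-\cos\theta)}$ for \emph{each} of $|N-NR e^{\sqrt{-1}\theta}|$ and $|N-Re^{\sqrt{-1}\theta}(N-2)|$ (so the integrand is $\lesssim \frac{1}{N^2(1-\cos\theta)}$ times the constant trace floor) and then the elementary identity $\sup_{r}r\int_r^\pi\frac{d\theta}{1-\cos\theta}=2$ (equation (\ref{ineq:cos})). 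It is this $N\theta$-growth of the non-trace factors, not the $L^2$ behavior of $A-Be^{\sqrt{-1}\theta}$, that produces the $1/N$. Two secondary points: your $A-B\geq 1/N$ (the contribution of $q=1$ alone) is far weaker than the uniform per-eigenvalue bound $A-B\geq\frac{\vep}{4+\vep}$ coming from Lemma~\ref{lem:bdd-1}'s right-hand side, and the latter is what yields the $(4+\vep)/\vep$ factor; and the role of $(1-\cos 1)^{-1/2}$ in (\ref{alpha-exp-bdd}) is from the paper's sup-bound $\sup_{r}r^2\int_r^\pi(1-\cos\theta)^{-3/2}d\theta\leq 2/(1-\cos 1)^{1/2}$, not from a constant floor on $\{|\theta|\geq 1\}$.
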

\begin{proof}
Throughout this proof, we fix $\vep\in (0,1]$. We start with the proof of (\ref{alpha0-exp-bdd}), and fix $n\geq 1$.
For $\zeta=(1-\frac{\vep}{N})z$, it follows from (\ref{alpha0-gen}), (\ref{alpha1-gen}) and Cauchy's integral formula that
\begin{align}
\begin{split}\label{Gamma-goal}
&\alpha(n,0)=\frac{1}{2\pi \sqrt{-1}}\int_{C(0,1)}\frac{1}{z^{n+1}}\\
&\hspace{2cm}\times\frac{2z}{[N-N(1-\frac{\vep}{N})z][N-(1-\frac{\vep}{N})z(N-2)]}\\
&\hspace{2.5cm}\times \frac{1}{
\frac{1}{N}\tr\left(\frac{1-(1-\frac{\vep}{N})z}{N-(1-\frac{\vep}{N})z(N-2+2Q)}\right)}dz\times \frac{1}{N}\left(1-\frac{\vep}{N}\right)^{-n+1}.
\end{split}
\end{align}
In the following, we bound
\begin{align}\label{Gamma}
\begin{split}
&\Bigg|\frac{1}{2\pi \sqrt{-1}}\int_{\Gamma_j}\frac{1}{z^{n+1}}\times\frac{2z}{[N-N(1-\frac{\vep}{N})z][N-(1-\frac{\vep}{N})z(N-2)]}\\
&\hspace{5.5cm}\times \frac{1}{\frac{1}{N}
\tr\left(\frac{1-(1-\frac{\vep}{N})z}{N-(1-\frac{\vep}{N})z(N-2+2Q)}\right)}dz\Bigg|,
\end{split}
\end{align}
for $j=1,2$,
where the arcs $\Gamma_j$ are defined by
\begin{align}
\Gamma_1&\triangleq \{z\in C(0,1)\setminus \{-1\};|\arg(z)|\leq 1/N\},\label{Gamma1}\\
\Gamma_2&\triangleq \{z\in C(0,1)\setminus \{-1\};|\arg(z)|\in (1/N,\pi)\}\cup \{-1\}\label{Gamma2}
\end{align}
and their disjoint union is equal to $C(0,1)$.
Here for any $z\in \Bbb C\setminus (-\infty,0]$, we write $\arg(z)=\theta$ for $z=|z|e^{i\theta}$ for $\theta\in (-\pi,\pi)$.

We handle (\ref{Gamma}) for $j=1$ first. Using (\ref{ineq:Mobm}), we see that
\begin{align}\label{Mobm-bdd}
\min_{\zeta\in C(0,1-\frac{\vep}{N})}
\left|\frac{1}{N}\tr\left(\frac{1-\zeta}{N-\zeta(N-2+2Q)}\right)\right|\geq \left(\frac{\vep}{4+\vep}\right)\frac{1}{N}.
\end{align}
By (\ref{Mobm-bdd}) and the fact that
$\min_{z\in C(0,1)}\big|1-az\big|=1-a$ for any $a\in (0,1)$,  we have
\begin{align}
&\Bigg|\frac{1}{2\pi \sqrt{-1}}\int_{\Gamma_1}\frac{1}{z^{n+1}}\times\frac{2z}{[N-N(1-\frac{\vep}{N})z][N-(1-\frac{\vep}{N})z(N-2)]}\notag\\
&\hspace{5.5cm}\times \frac{1}{
\frac{1}{N}\tr\left(\frac{1-(1-\frac{\vep}{N})z}{N-(1-\frac{\vep}{N})z(N-2+2Q)}\right)}dz\Bigg|\notag\\
&\hspace{.5cm}\leq \frac{1}{2\pi} \int_{-1/N}^{1/N}\frac{2}{\vep\left(2+\vep-\frac{2\vep}{N}\right)\times \vep/(4+\vep)\times (1/N)} d\theta\leq \frac{1}{\vep \pi }\times \frac{4+\vep}{\vep}.\label{Gamma1-1}
\end{align}

Next, we bound (\ref{Gamma}) for $j=2$.
Note that
\begin{align*}
\forall\; \delta\in (0,1)\;\mbox{and}\; \theta\in [0,2\pi],\quad |1-(1-\delta)e^{\sqrt{-1}\theta}|
\geq 
&\sqrt{2(1-\delta)}\sqrt{1-\cos \theta}.
\end{align*}
Applying the foregoing inequality to
\begin{align*}
&\frac{1}{N}\left|N-N\left(1-\frac{\vep}{N}\right)z\right|=\left|1-\left(1-\frac{\vep}{N}\right)\right|
\quad\mbox{and}\\
&\frac{1}{N}\left|N-\left(1-\frac{\vep}{N}\right)z(N-2)\right|=\left|1-\left(1-\frac{\vep}{N}\right)\left(1-\frac{2}{N}\right)z\right|\quad \mbox{for }z\in C(0,1),
\end{align*}
and using (\ref{Mobm-bdd}), 
we see that
\begin{align}
\begin{split}\notag
&\Bigg|\frac{1}{2\pi \sqrt{-1}}\int_{\Gamma_2}\frac{1}{z^{n+1}}\times\frac{2z}{[N-N(1-\frac{\vep}{N})z][N-(1-\frac{\vep}{N})z(N-2)]}\notag\\
&\hspace{5.5cm}\times \frac{1}{\frac{1}{N}
\tr\left(\frac{1-(1-\frac{\vep}{N})z}{N-(1-\frac{\vep}{N})z(N-2+2Q)}\right)}dz\Bigg|\notag\\
\end{split}\\
&\hspace{.2cm}\leq \frac{1}{2\pi}\int_{\{\theta\in \R:1/N\leq |\theta|\leq \pi\}}\frac{2}{N\sqrt{2\big(1-\frac{\vep}{N}\big)(1-\cos \theta)}} \notag\\
&\hspace{.5cm}\times\frac{1}{N\sqrt{2\left(1-\frac{\vep}{N}\right)\left(1-\frac{2}{N}\right)(1-\cos\theta)}}\times \frac{1}{\vep/(4+\vep)\times (1/N)}d\theta\notag\\
&\leq \frac{8^{1/2}}{\pi}\times\frac{4+\vep}{\vep}\times \frac{1}{N}\int_{1/N}^\pi \frac{1}{1-\cos \theta}d\theta\leq \frac{4\sqrt{2}}{\pi}\times \frac{4+\vep}{\vep}
,\label{B1:Gamma2}
\end{align}
where the next to the last inequality follows since $\vep/N,2/N\leq 1/2$ and
\begin{align}\label{ineq:cos}
\sup_{r\in (0,1]}r\int_r^\pi \frac{d\theta}{1-\cos \theta}d\theta=2.
\end{align}
Applying (\ref{Gamma1-1}) and (\ref{B1:Gamma2}) to (\ref{Gamma-goal}), we see that $|\alpha(n,0)|$ can be bounded as in (\ref{alpha0-exp-bdd}). 

The proofs of (\ref{alpha1-exp-bdd}) and (\ref{alpha-exp-bdd}) 
follow the same lines as the proof of (\ref{alpha0-exp-bdd}), except for minor changes and the application of the inequality
\[
\sup_{r\in (0,1]}r^2\int_r^\pi \frac{d\theta}{(1-\cos \theta)^{3/2}}\leq \frac{2}{(1-\cos 1)^{1/2}}
\]
instead of (\ref{ineq:cos}) in proving (\ref{alpha-exp-bdd}). The details are left to the readers.
\end{proof}

\begin{lem}\label{lem:T2}
Fix $\vep\in (0,1]$. Then for every $\lambda\in (\vep,\infty)$ such that $(\lambda-\vep)N>\lambda\vep$, $u\in (0,1)$ and $m\in \Bbb N$,
we have
\begin{align}\label{eq:A0nalytic2}
&\left|\frac{\lambda N^2 }{2u(1-u)(N+\lambda)}\sum_{n=mN+1}^\infty \left(\frac{N}{N+\lambda}\right)^n
\left(\frac{1}{N}S_{\beta_u} L_0^{n+1}(J)-\frac{1}{N}S_{\beta_u} L_0^n(J)\right)\right|\notag\\
&\hspace{.5cm}\leq \left[\Big(1+\frac{\lambda}{N}\Big)\left(1-\frac{\vep}{N}\right)\right]^{-(mN+1)}\times \frac{C_\vep\lambda (N-\vep)}{(\lambda-\vep)N-\lambda\vep},
\end{align}
where the constant $C_\vep$ is defined by (\ref{def:Cvep}).
\end{lem}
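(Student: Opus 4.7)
The plan is to reduce the expression to a scalar series in $\tr(Q^s)$, using the expansion $L_0^n(J) = J + \sum_{s=0}^\infty \alpha(n,s)Q^s$ from Lemma~\ref{lem:alpha}. First I would verify that $L_0^n(J)\1 = \1$ for every $n$: the formula (\ref{def:L0}) yields $L_0(C)\1 = \frac{N-1}{N}C\1 + \frac{1}{N}QC\1$, so the property $C\1 = \1$ is preserved (since $Q\1=\1$), and $J\1=\1$ starts the induction. Applying the series expansion to $\1$ and using $Q^s\1=\1$ then forces $\sum_{s=0}^\infty \alpha(n,s)=0$. Combined with $\frac{1}{N}S_{\beta_u}(Q^s) = u^2 + \frac{u(1-u)}{N}\tr(Q^s)$ from (\ref{Sbetau}), this collapses the $u^2$ piece and gives
\[
\frac{1}{N}S_{\beta_u}\bigl(L_0^{n+1}(J) - L_0^n(J)\bigr) = \frac{u(1-u)}{N}(T_{n+1} - T_n),\quad T_n \triangleq \sum_{s=0}^\infty \alpha(n,s)\tr(Q^s).
\]

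The next and most delicate step is to establish the identity
\[
T_{n+1} - T_n = \frac{2}{N}\bigl(1 + T_n^{(1)}\bigr), \quad T_n^{(1)} \triangleq \sum_{s=0}^\infty \alpha(n,s)\tr(Q^{s+1}).
\]
The cleanest derivation is to compute $(L_0 - I)L_0^n(J)$ term-by-term using (\ref{def:L0}) and the series expansion, and then take trace while invoking $\tr(I-Q) = N$ (from the trace condition); equivalently one can manipulate the recurrences (\ref{rec1})--(\ref{rec3}) directly. Substituting back in (\ref{eq:A0nalytic2}), the prefactor $\frac{\lambda N^2}{2u(1-u)(N+\lambda)} \cdot \frac{u(1-u)}{N}\cdot\frac{2}{N}$ collapses to $\frac{\lambda}{N+\lambda}$, and the quantity becomes
\[
\frac{\lambda}{N+\lambda}\sum_{n=mN+1}^\infty \zeta^n + \frac{\lambda}{N+\lambda}\sum_{n=mN+1}^\infty \zeta^n T_n^{(1)},\quad \zeta = \frac{N}{N+\lambda},
\]
whose first sum is elementary and evaluates to $(N/(N+\lambda))^{mN+1}$.

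For the $T_n^{(1)}$ sum, I would bound $|T_n^{(1)}|$ using Lemma~\ref{lem:alpha-growth}: the trace condition kills the $s=0$ contribution, so
\[
|T_n^{(1)}| \leq N|\alpha(n,1)| + N\sup_{q\in[-1,1]}\Bigl|\sum_{s\geq 2}\alpha(n,s)q^s\Bigr| \leq D\,\rho^{n-1},
\]
with $\rho = N/(N-\vep)$ and an explicit constant $D = D(\vep)$ read off from (\ref{alpha1-exp-bdd})--(\ref{alpha-exp-bdd}). The geometric series $\sum_{n\geq mN+1}(\zeta\rho)^n$ converges precisely when $\zeta\rho<1$, which is the hypothesis $(\lambda-\vep)N>\lambda\vep$; its closed form produces both the factor $[(1+\lambda/N)(1-\vep/N)]^{-(mN+1)}$ and the denominator $(\lambda-\vep)N-\lambda\vep$. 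Elementary estimates $(N-\vep)^2/N \leq N-\vep$ and $\bigl((\lambda-\vep)N-\lambda\vep\bigr)/\bigl(\lambda(N-\vep)\bigr)\leq 1$ let me combine both pieces into a single multiple of $\frac{\lambda(N-\vep)}{(\lambda-\vep)N-\lambda\vep}[(1+\lambda/N)(1-\vep/N)]^{-(mN+1)}$, after which the numerical inequality $1+D\leq C_\vep$ (straightforwardly verified for $\vep\in(0,1]$ from the explicit forms of $D$ and (\ref{def:Cvep})) completes the bound.

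The main obstacle is the telescoping identity $T_{n+1}-T_n = \frac{2}{N}(1+T_n^{(1)})$. Without it, a naive triangle bound $|T_{n+1}-T_n|\leq|T_{n+1}|+|T_n|$ loses a full factor of $N$ and cannot yield the target form: isolating the constant ``$1$'' inside the series is precisely what produces the $(N/(N+\lambda))^{mN+1}$ piece on its own while keeping the $T_n^{(1)}$ remainder of the correct order in $\lambda$. A secondary bookkeeping issue is carrying out the constant chase to match $C_\vep$ exactly, but the generous inequality $1+D\leq C_\vep$ is comfortable to verify.
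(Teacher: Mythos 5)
Your proposal is correct and follows essentially the same route as the paper: expand $L_0^n(J)=J+\sum_s\alpha(n,s)Q^s$, use $\sum_s\alpha(n,s)=0$ to kill the $u^2$ piece in $S_{\beta_u}$, simplify $T_{n+1}-T_n$ via the recurrences (\ref{rec1})--(\ref{rec3}) and the trace condition, and then control the remainder with Lemma~\ref{lem:alpha-growth} before summing the geometric series. The only differences are cosmetic: you derive $\sum_s\alpha(n,s)=0$ via $L_0^n(J)\1=\1$ rather than by adding the recurrences directly, and you peel off the constant ``$1$'' into its own geometric series (giving $\zeta^{mN+1}$ exactly) before recombining via $\zeta\leq\zeta\rho$ and $\lambda(N-\vep)\geq(\lambda-\vep)N-\lambda\vep$, whereas the paper bundles everything into the single uniform bound $|T_{n+1}-T_n|\leq\tfrac{2C_\vep}{N}\rho^n$ and sums once. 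Both treatments land on the stated bound, and your constant check $1+D\leq C_\vep$ indeed holds with comfortable slack since $\tfrac{4+\vep}{\vep}\geq 5$.
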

\begin{proof} 
First we claim the following inequality: for all $n\in \Bbb Z_+$
\begin{align}
\begin{split}
&\left|\frac{1}{N}S_{\beta_u}L^{n+1}_0(J)-\frac{1}{N}S_{\beta_u}L^n_0(J)\right|\leq \frac{2C_\vep u(1-u) }{N^2}
\left(\frac{N}{N-\vep}\right)^n,\label{Budiff-growth}
\end{split}
\end{align}
where the constant $C_\vep$ is defined by (\ref{def:Cvep}).
By (\ref{eq:L0n}) and (\ref{Sbetau}), we have
\begin{align*}
&\frac{1}{N}S_{\beta_u}L_0^{n+1}(J)-\frac{1}{N}S_{\beta_u}L^n_0(J)
=\sum_{s=0}^\infty [\alpha(n+1,s)-\alpha(n,s)]
\frac{u^2N+(u-u^2)\tr(Q^s)}{N}.
\end{align*}
Note that $\sum_{s=0}^\infty \alpha(n,s)=0$, which can be seen by using the initial condition of $\alpha$ and adding up both sides of (\ref{rec1})--(\ref{rec3}). Hence,
by the foregoing display and then the recursive equations (\ref{rec1})--(\ref{rec3}) for $\alpha$, we get
\begin{align*}
&\frac{1}{N}S_{\beta_u}L_0^{n+1}(J)-\frac{1}{N}S_{\beta_u}L_0^n(J)
=\frac{u(1-u)}{N}\sum_{s=0}^\infty [\alpha(n+1,s)-\alpha(n,s)]\tr(Q^s)\notag\\
=&\frac{u(1-u)}{N}\Bigg\{\left[\frac{2}{N^2}+\frac{2}{N^2}\sum_{s=1}^\infty \alpha(n,s)\tr(Q^s)\right]\tr(I)\\
&\hspace{1cm}+\left[\frac{-2}{N^2}-\frac{2}{N}\alpha(n,1)-\frac{2}{N^2}\sum_{s=1}^\infty \alpha(n,s)\tr(Q^s)\right]\tr(Q)\notag\\
&\hspace{1.5cm}-\frac{2}{N}\sum_{s=2}^\infty [\alpha(n,s)-\alpha(n,s-1)]\tr(Q^s)\Bigg\}\\
=&\frac{u(1-u)}{N}\Bigg\{\left[\frac{2}{N}-\frac{2\tr(Q)}{N^2}\right]+\left[\frac{2\tr(Q^2)}{N}-\frac{2\tr(Q)^2}{N^2}\right]\alpha(n,1)\\
&\hspace{1.5cm}-\frac{2\tr(Q)}{N^2}\tr\left(\sum_{s=2}^\infty Q^s\alpha(n,s)\right)+\frac{2}{N}\tr\left(Q\sum_{s=2}^\infty Q^{s}\alpha(n,s)\right)\Bigg\}.
\end{align*}
Our claim (\ref{Budiff-growth}) follows upon applying Lemma~\ref{lem:alpha-growth} to the right-hand of the foregoing equality.

To finish the proof of (\ref{eq:A0nalytic2}), we use (\ref{Budiff-growth}) and get
\begin{align*}
&\Bigg|\frac{\lambda N^2 }{2u(1-u)(N+\lambda)}\sum_{n=mN+1}^\infty \left(\frac{N}{N+\lambda}\right)^n
\left(\frac{1}{N}S_{\beta_u}L^{n+1}_0(J)-\frac{1}{N}S_{\beta_u}L^{n}_0(J)\right)\Bigg|\\
\leq &\frac{\lambda N^2}{2u(1-u)(N+\lambda)}\times \frac{2C_\vep u(1-u) }{N^2}\times \sum_{n=mN+1}^\infty \left(\frac{N}{N+\lambda}\right)^n \left(\frac{N}{N-\vep}\right)^n\\
=& \left[\Big(1+\frac{\lambda}{N}\Big)\left(1-\frac{\vep}{N}\right)\right]^{-(mN+1)}\times \frac{C_\vep\lambda (N-\vep)}{(\lambda-\vep)N-\lambda\vep},
\end{align*}
as stated in in the required inequality (\ref{eq:A0nalytic2}). The proof is complete.
\end{proof}

The final stage is on bounding the third object described below (\ref{T}):
\begin{align}
\begin{split}\label{T'}
&\frac{\lambda N^2}{2u(1-u)(N+\lambda)}
\sum_{n=0}^{mN} \left(\frac{N}{N+\lambda}\right)^n\Bigg[\left(\frac{1}{N}S_{\beta_u}L^{n+1}(J)-\frac{1}{N}S_{\beta_u}L^n(J)\right)\\
&\hspace{4.5cm}-\left(\frac{1}{N}S_{\beta_u}L_0^{n+1}(J)-\frac{1}{N}S_{\beta_u}L_0^n(J)\right)\Bigg].
\end{split}
\end{align}

\begin{lem}\label{lem:LL0-diff}
For all $n\in \Bbb N$, 
\begin{align}\label{eq:LLn}
L^{n}(J)-L^{n}_0(J)=\sum_{j=0}^{n-1}L^j(\eta_{n-j}),
\end{align}
where
$\eta_n\triangleq L\big(L_0^{n-1}(J)\big)-L_0^n(J)$
satisfies
\begin{align}\label{eq:etan}
\begin{split}
\eta_n
=&\sum_{s=0}^\infty \Bigg\{\left[\frac{1}{N}\diag(Q^s)+\diag\big(Q\diag(Q^s)J\big)-2\frac{\tr(Q^s)}{N^2}\right]I\\
&\hspace{.5cm}+\left[-\frac{\diag(Q^s)}{N}Q-Q\frac{\diag(Q^s)}{N}+\frac{2\tr(Q^s)}{N^2}Q\right]\Bigg\}\alpha(n-1,s).
\end{split}
\end{align}
\end{lem}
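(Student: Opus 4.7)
The plan is to prove (\ref{eq:LLn}) by an elementary telescoping induction, and (\ref{eq:etan}) by a direct evaluation of $L - L_0$ on the finite series representation of $L_0^{n-1}(J)$ supplied by Lemma~\ref{lem:alpha}.

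For (\ref{eq:LLn}), I would induct on $n$. The base case $n=1$ is the definition of $\eta_1$. For the inductive step, I would decompose
\[
L^{n+1}(J) - L_0^{n+1}(J) = L\bigl(L^n(J) - L_0^n(J)\bigr) + \bigl(L(L_0^n(J)) - L_0^{n+1}(J)\bigr),
\]
recognize the second summand as $\eta_{n+1}$ by its very definition, plug the inductive hypothesis into the first summand, distribute $L$ over its finite sum, and relabel the index to obtain the stated formula with $n$ replaced by $n+1$.

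For (\ref{eq:etan}), start from $\eta_n = (L-L_0)\bigl(L_0^{n-1}(J)\bigr)$ and apply Lemma~\ref{lem:alpha} to write $L_0^{n-1}(J) = J + \sum_{s=0}^\infty \alpha(n-1,s)\, Q^s$; this is a finite sum by the finite-speed-of-propagation property of $\alpha$. Subtracting the definition (\ref{def:L0}) of $L_0$ from the explicit representation (\ref{eq:tst}) of $L$ in Proposition~\ref{prop:conj} yields, for any $C \in \mathsf M_E$,
\[
(L-L_0)(C) = \frac{\diag(C)}{N} + \diag\bigl(Q\diag(C)J\bigr) - \frac{2\tr(C)}{N^2}I - \frac{\diag(C)Q + Q\diag(C)}{N} + \frac{2\tr(C)}{N^2}Q.
\]
Setting $C = J$ and using $\diag(J) = I/N$, $\tr(J) = 1$, and $QJ = JQ = J$, the various terms collapse and one sees $(L-L_0)(J) = 0$; this is the $s = 0$ case of (\ref{L=L0}), which does not require walk-regularity. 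Substituting $C = Q^s$ into the displayed identity and summing against the weights $\alpha(n-1,s)$ then reproduces (\ref{eq:etan}) verbatim.

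The argument is essentially algebraic bookkeeping: the only genuine verification is the vanishing $(L-L_0)(J) = 0$, which eliminates the $J$-summand from the expansion of $L_0^{n-1}(J)$ and leaves precisely the sum over $s\geq 0$ appearing in the statement. No convergence issues arise because the series is finite, and no further properties of $Q$ beyond symmetry and the representation in Proposition~\ref{prop:conj} are used.
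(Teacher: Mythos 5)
Your proof is correct and, for the first identity~(\ref{eq:LLn}), follows the same route as the paper: the paper sets $\vep_n = L^n(J) - L_0^n(J)$, derives $\vep_n = L(\vep_{n-1}) + \eta_n$ from $L^n(J) = L(\vep_{n-1}) + L(L_0^{n-1}(J))$, and iterates --- precisely your telescoping induction in different clothing.

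For~(\ref{eq:etan}) you take a genuinely cleaner path. The paper brute-forces two separate expansions: it applies $L$ term by term to $L_0^{n-1}(J) = J + \sum_s \alpha(n-1,s)Q^s$ via Corollary~\ref{cor:tst}, then independently rewrites $L_0^n(J)$ by feeding the recursion~(\ref{rec1})--(\ref{rec3}) into the series~(\ref{eq:L0n}) so as to re-express the coefficients $\alpha(n,\cdot)$ in terms of $\alpha(n-1,\cdot)$, and finally subtracts the two long displays. You instead observe at the outset that $\eta_n = L(L_0^{n-1}(J)) - L_0(L_0^{n-1}(J)) = (L-L_0)\bigl(L_0^{n-1}(J)\bigr)$, compute $(L-L_0)(C)$ once from~(\ref{eq:tst}) and~(\ref{def:L0}), verify the annihilation $(L - L_0)(J) = 0$ (which, as you note, needs only $QJ = J$ and $\diag(J) = I/N$, not walk-regularity), and then hit the finite series with a linear operator. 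This bypasses the $\alpha$-recursion entirely and replaces two long coefficient-matching computations with one short one; it also makes transparent why $J$ drops out of the formula. One minor mislabeling: the vanishing $(L-L_0)(J)=0$ is the first assertion of~(\ref{L=L0}), not its ``$s=0$ case'' (which would be $(L-L_0)(I) = 0$, a different and in general false statement); this does not affect the argument.
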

\begin{proof}
Set $\vep_n=L^n(J)-L^n_0(J)$. For any $n\in \Bbb N$, we have
\begin{align*}
L^{n}(J)=&L\big(L^{n-1}(J)\big)=L(\vep_{n-1})+L\big(L_0^{n-1}(J)\big)
=L(\vep_{n-1})+\eta_{n}+ L_0^{n}(J)
\end{align*}
by the definition of $\eta_{n}$, and 
it follows that 
\[
\vep_{n}=L(\vep_{n-1})+\eta_{n}. 
\]
We obtain (\ref{eq:LLn}) by iterating the above equality and using the equality $\vep_1=\eta_1$ (recall Corollary~\ref{cor:tst} and (\ref{def:L0})).

Next, we show the explicit form (\ref{eq:etan}) of $\eta_n$.
Fix $n\in \Bbb N$, and recall the definition (\ref{def:L0}) of $L_0$.
By (\ref{eq:L0n}) and Corollary~\ref{cor:tst}, we have
\begin{align*}
&L\big(L_0^{n-1}(J)\big)
=\left[J+\frac{2}{N^2}(I-Q)\right]+\sum_{s=0}^\infty \alpha(n-1,s)L(Q^s)\\
=&J+\frac{2}{N^2}(I-Q)+\sum_{s=0}^\infty \alpha(n-1,s)\Bigg\{\frac{N-2}{N}Q^s+\frac{2}{N}Q^{s+1}\\
&-\frac{1}{N}[\diag(Q^s)Q+Q\diag(Q^s)]+\frac{1}{N}\diag(Q^s)+\diag\big(Q\diag(Q^s)J\big)\Bigg\}\\
=&J+\Bigg[\frac{2}{N^2}+\frac{N-2}{N}\alpha(n-1,0)+\sum_{s=0}^\infty \alpha(n-1,s)\frac{1}{N}\diag(Q^s)\\
&\hspace{.5cm}+\sum_{s=0}^\infty \alpha(n-1,s)\diag\big(Q\diag(Q^s)J\big)\Bigg] I\\
&+\Bigg\{\left[-\frac{2}{N^2}+\frac{N-2}{N}\alpha(n-1,1)+\frac{2}{N}\alpha(n-1,0)-\sum_{s=0}^\infty \alpha(n-1,s)\frac{1}{N}\diag(Q^s)\right]Q\\
&\hspace{.5cm}-Q\left[\sum_{s=0}^\infty\alpha(n-1,s)\frac{1}{N}\diag(Q^s)\right]\Bigg\}\\
&+\sum_{s=2}^\infty \left[\frac{N-2}{N}\alpha(n-1,s)+\frac{2}{N}\alpha(n-1,s-1)\right]Q^s.
\end{align*}
On the other hand, if we express the coefficients $\alpha(n,\,\cdot\,)$ of $L^n_0(J)$ in (\ref{eq:L0n}) by $\alpha(n-1,\,\cdot\,)$ using the partial recurrence equations (\ref{rec1})--(\ref{rec3}) obeyed by $\alpha$, then
\begin{align*}
L^n_0(J)
=&J+\left[\frac{2}{N^2}+\alpha(n-1,0)+\frac{2}{N^2}\sum_{s=1}^\infty \alpha(n-1,s)\tr(Q^s)\right]I\\
&+\left[-\frac{2}{N^2}+\frac{N-2}{N}\alpha(n-1,1)-\frac{2}{N^2}\sum_{s=1}^\infty \alpha(n-1,s)\tr(Q^s)\right]Q\\
&+\sum_{s=2}^\infty \left[\frac{N-2}{N}\alpha(n-1,s)+\frac{2}{N}\alpha(n-1,s-1)\right]Q^s\\
=&J+\left[\frac{2}{N^2}+\frac{N-2}{N}\alpha(n-1,0)+\sum_{s=0}^\infty \alpha(n-1,s)\frac{2\tr(Q^s)}{N^2}\right]I\\
&+\left[-\frac{2}{N^2}+\frac{N-2}{N}\alpha(n-1,1)+\frac{2}{N}\alpha(n-1,0)-\sum_{s=0}^\infty \alpha(n-1,s)\frac{2\tr(Q^s)}{N^2}\right]Q\\
&+\sum_{s=2}^\infty \left[\frac{N-2}{N}\alpha(n-1,s)+\frac{2}{N}\alpha(n-1,s-1)\right]Q^s.
\end{align*}
Comparing the last equalities in the foregoing two displays, we deduce that $\eta_n$ satisfies the required expression (\ref{eq:etan}). The proof is complete.
\end{proof}

The next lemma is on bounding the square matrices followed by $\alpha(n-1,s)$ in the expression (\ref{eq:etan}). Recall the definition (\ref{def:RQ}) of $\mathcal R_Q^\gamma(x,s)$ and
the $\ell_1$-norm on $\mathsf M_E$ defined by (\ref{eq:matrix-norm}).

\begin{lem}\label{lem:eigenbdd}
For all $s\in \Bbb Z_+$ and $\gamma\in [0,1]$,
\begin{align}
\begin{split}
& \left\|\diag\big(Q\diag(Q^s)J\big)-\frac{\tr(Q^s)}{N^2}I\right\|\leq \left\|\frac{1}{N}\diag(Q^s)-\frac{\tr(Q^s)}{N^2}I\right\|\\
&\hspace{1.5cm} \leq \min\left\{\frac{4\min_{x\in E}[N-\#\mathcal R_Q^\gamma(x,s)]}{N}+\gamma,\frac{2\tr(|Q|^s;Q<1)}{N}\right\}
 \label{delta-bdd}
 \end{split}
\end{align}
(for the last trace term, recall the notation (\ref{eq:trace-set})).
\end{lem}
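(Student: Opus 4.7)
The plan is to compute both $\ell_1$-norms in (\ref{delta-bdd}) explicitly in terms of the diagonal profile $d(z) := \la z|Q^s|z\ra$ and its mean $\bar d := \tr(Q^s)/N$, and then produce two competing estimates for the sharper of the two norms. Since every entry of $J$ equals $1/N$ and $\diag(Q^s)$ is diagonal with entries $d(z)$, a direct expansion identifies $\diag\bigl(Q\diag(Q^s)J\bigr)$ as the diagonal matrix with $x$th entry $(Qd)_x/N$, and $\tr(Q^s)I/N^2$ as the diagonal matrix with constant entry $\bar d/N$. Thus the two norms in (\ref{delta-bdd}) equal $\frac{1}{N}\sum_x|(Qd)_x-\bar d|$ and $\frac{1}{N}\sum_x|d(x)-\bar d|$, respectively. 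The first inequality then follows from the identity $(Qd)_x-\bar d=\sum_z Q(x,z)(d(z)-\bar d)$ (using $Q\1=\1$), the triangle inequality, and the column-stochasticity $\sum_x Q(x,z)=1$ (a consequence of the assumed symmetry).

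For the first term inside the minimum, I would fix $x_0$ minimizing $N-\#\mathcal R_Q^\gamma(x,s)$ over $x\in E$ and set $R:=\mathcal R_Q^\gamma(x_0,s)$. Splitting the sum according to whether $y\in R$ and using $|d(y)-d(x_0)|\le\gamma$ on $R$ together with the crude pointwise bound $|d(y)-d(x_0)|\le 2$ off $R$ yields
\[
\frac{1}{N}\sum_y|d(y)-d(x_0)|\le\gamma+\frac{2(N-\#R)}{N}.
\]
Since $|\bar d-d(x_0)|$ is controlled by the same average, the triangle inequality $|d(y)-\bar d|\le|d(y)-d(x_0)|+|d(x_0)-\bar d|$ produces an upper bound on $\frac{1}{N}\sum_y|d(y)-\bar d|$ matching the first term inside the minimum in (\ref{delta-bdd}) (up to the numerical constants).

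For the second term inside the minimum, I would exploit the spectral decomposition of the symmetric matrix $Q$. Letting $\{\phi_i,\lambda_i\}_{i=1}^N$ be an orthonormal eigenbasis of $Q$ with $\phi_1=N^{-1/2}\1$ and $\lambda_1=1$ (simple, by irreducibility), one obtains
\[
d(y)-\bar d=\sum_{i\ge 2}\lambda_i^s\Bigl(\phi_i(y)^2-\tfrac{1}{N}\Bigr),
\]
and summing $|\cdot|$ over $y$ with $\sum_y|\phi_i(y)^2-1/N|\le\sum_y\phi_i(y)^2+\sum_y\tfrac{1}{N}=2$ gives
\[
\frac{1}{N}\sum_y|d(y)-\bar d|\le\frac{2}{N}\sum_{i\ge 2}|\lambda_i|^s=\frac{2\,\tr(|Q|^s;\,Q<1)}{N},
\]
which is the second term. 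The main obstacle is just careful bookkeeping of the constants in the first estimate so that they line up with the precise $\frac{4}{N}$ and $\gamma$ stated in (\ref{delta-bdd}); beyond that, both halves of the minimum reduce to a short triangle-inequality argument against a well-chosen reference value---a concrete point $x_0\in E$ for the first bound, and the spectral mean for the second.
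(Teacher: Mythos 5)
Your plan reproduces the paper's argument for the first inequality (its Step 1) and for the trace bound (its Step 4) essentially verbatim: the former via $(Qd)_x-\bar d=\sum_zQ(x,z)\big(d(z)-\bar d\big)$ and stochasticity, the latter via the spectral decomposition together with $\sum_y|\psi_i(y)^2-1/N|\le 2$, where, as in your notation, $d(y):=\langle y|Q^s|y\rangle$ and $\bar d:=\tr(Q^s)/N$. For the middle bound the paper first isolates the pointwise estimate $|d(y)-\bar d|\le \tfrac{2[N-\#\mathcal R_Q^\gamma(y,s)]}{N}+\gamma$ (its (\ref{eq:locbdd-1})) and then averages, whereas you pass directly through the reference value $d(x_0)$; the two are the same idea.

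Where your version falls short is the $\gamma$-coefficient, and it is not purely bookkeeping. Your triangle inequality $|d(y)-\bar d|\le|d(y)-d(x_0)|+|d(x_0)-\bar d|$ necessarily doubles the $\gamma$-contribution (both summands carry an $O(\gamma)$ term), because $d(x_0)$ sits at the center, i.e.\ possibly at an endpoint, of the width-$2\gamma$ window $[d(x_0)-\gamma,d(x_0)+\gamma]$ that contains $\{d(y):y\in R\}$. As written you get $2\gamma+\tfrac{4(N-\#R)}{N}$, not $\gamma+\tfrac{4(N-\#R)}{N}$. The fix is a different reference point: center at the conditional average $\bar d_R:=\tfrac{1}{\#R}\sum_{y\in R}d(y)$. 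Values confined to an interval of length $2\gamma$ have average $\ell_1$-deviation at most $\gamma$ from their own mean, so $\tfrac{1}{N}\sum_{y\in R}|d(y)-\bar d_R|\le\tfrac{\#R}{N}\gamma$, while $|\bar d_R-\bar d|\le\tfrac{N-\#R}{N}$; together with the tail $\tfrac{1}{N}\sum_{y\notin R}|d(y)-\bar d|\le\tfrac{N-\#R}{N}$ this gives $\gamma+\tfrac{2(N-\#R)}{N}$, comfortably within the stated bound. That said, at the point where the lemma is consumed (Lemma~\ref{lem:T3}), the estimate is anyway enlarged to a multiple of $\Delta_Q^\gamma$, so a residual factor of $2$ on $\gamma$ would be harmless downstream.
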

\begin{proof}
Fix $s\in \Bbb Z_+$.
We will prove (\ref{delta-bdd}) in four steps.\\

\noindent{\bf (Step 1).} Note that
\begin{align*}
\left\|\diag\big(Q\diag(Q^s)J\big)-\frac{\tr(Q^s)}{N^2}I\right\|&=\frac{1}{N}\sum_{y\in E}\left|\sum_{x\in E} \la y|Q|x\ra \la x|Q^s|x\ra -\frac{\tr(Q^s)}{N}\right|\\
&=\frac{1}{N}\sum_{y\in E}\left|\sum_{x\in E} \la x|Q|y\ra \left[\la x|Q^s|x\ra -\frac{\tr(Q^s)}{N}\right]\right|\\
&\leq \sum_{x\in E} \left|\frac{1}{N}\la x|Q^s|x\ra -\frac{\tr(Q^s)}{N^2}\right|\\
&=\left\|\frac{1}{N}\diag(Q^s)-\frac{\tr(Q^s)}{N^2}I\right\|.
\end{align*}
The first inequality in (\ref{delta-bdd}) follows from the foregoing display.\\

\noindent {\bf (Step 2).} Before giving the proof of the second inequality in (\ref{delta-bdd}), we claim that
for any $x\in E$,
\begin{align}
\left|\langle x|Q^s|x\rangle-\frac{\tr(Q^s)}{N}\right|\leq \frac{2[N-\#\mathcal R_Q^\gamma(x,s)]}{N}+\gamma.\label{eq:locbdd-1}
\end{align}
To see this, consider
\begin{align*}
\left|\langle x|Q^s|x\rangle-\frac{\tr(Q^s)}{N}\right|&\leq \frac{1}{\#\mathcal R_Q^\gamma(x,s)}\sum_{y\in \mathcal R_Q^\gamma(x,s)}\left|\langle x|Q^s|x\rangle-\frac{\#\mathcal R_Q^\gamma(x,s)}{N}\langle y|Q^s|y\rangle\right|\\
&\hspace{.5cm}+\frac{N-\#\mathcal R_Q^\gamma(x,s)}{N}\\
&\leq  \frac{N-\#\mathcal R_Q^\gamma(x,s)}{N}\left[\langle x|Q^s|x\rangle+1\right]+\gamma\\
&\leq \frac{2[N-\#\mathcal R^\gamma_Q(x,s)]}{N}+\gamma,
\end{align*}
which gives our claim (\ref{eq:locbdd-1}).\\

\noindent {\bf (Step 3).} We claim that
\begin{align}
\left\|\frac{1}{N}\diag(Q^s)-\frac{\tr(Q^s)}{N^2}I\right\|\leq \frac{4\min_{x\in E}[N-\#\mathcal R_Q^\gamma(x,s)]}{N}+\gamma.\label{delta-bdd-1-1}
\end{align}
For any $x\in E$,
\begin{align*}
\left\|\frac{1}{N}\diag(Q^s)-\frac{\tr(Q^s)}{N^2}I\right\|=&\frac{1}{N}\sum_{y\in E} \left|\langle y|Q^s|y\rangle-\frac{\tr(Q^s)}{N}\right|\\
\leq &\frac{1}{ N}\sum_{y\in \mathcal R_Q^\gamma(x,s)}\left|\langle y|Q^s|y\rangle-\frac{\tr(Q^s)}{N}\right|+\frac{2[N-\#\mathcal R_Q^\gamma(x,s)]}{N}\\
\leq &\frac{\# \mathcal R_Q^\gamma(x,s)}{N}\left(\frac{2[N-\#\mathcal R^Q_\gamma(x,s)]}{N}+\gamma\right)+\frac{2[N-\#\mathcal R_Q^\gamma(x,s)]}{N}\\
\leq &\frac{4[N-\#\mathcal R_Q^\gamma(x,s)]}{N}+\gamma,
\end{align*}
where the second inequality follows from the inequality (\ref{eq:locbdd-1}). 
Since the last inequality holds for arbitrary $x\in E$, (\ref{delta-bdd-1-1}) follows.\\

\noindent {\bf (Step 4).} Finally, we claim that
\begin{align}
\left\|\frac{1}{N}\diag(Q^s)-\frac{\tr(Q^s)}{N^2}I\right\|\leq \frac{2}{N}\tr\left(|Q|^s;Q<1\right).\label{delta-bdd-1-2}
\end{align}
Let $\{\psi_q;q\in \sigma(Q)\} $ be a basis of $\Bbb C^E$ corresponding to the eigenvalues $q$ of $Q$ ($\sigma(Q)$ denotes the spectrum of $Q$). We may assume that the eigenfunctions $\psi_q$ are orthonormal with respect to the counting measure over $E$. 
Then we have
\begin{align*}
\left\|\frac{1}{N}\diag(Q^s)-\frac{\tr(Q^s)}{N^2}I\right\|&=\frac{1}{N^2}\sum_{x\in E}\left|\sum_{y\in E}\Big(\langle x|Q^s|x\rangle-\langle y|Q^s|y\rangle\Big)\right|\\
&=\frac{1}{N^2}\sum_{x\in E}\left|\sum_{y\in E}\sum_{q\in \sigma(Q)}\Big(\psi_q(x)^2-\psi_q(y)^2\Big) q^s\right|\\
&\leq \frac{2\tr(|Q|^s;Q<1)}{N},
\end{align*}
where the last equality follows since the unique eigenfunction corresponding to $1$ is a constant function. The last inequality gives (\ref{delta-bdd-1-2}).

By (\ref{delta-bdd-1-1}) and (\ref{delta-bdd-1-2}), the second inequality in (\ref{delta-bdd}) follows at once. The proof is complete.
\end{proof}

\begin{lem}\label{lem:T3}
For any $\lambda\in (0,\infty)$, $u\in (0,1)$, $m\in \Bbb N$ and $\gamma\in [0,1]$, we have
\begin{align}
\begin{split}\label{diff0}
&\Bigg|\frac{\lambda N^2}{2u(1-u)(N+\lambda)}
\sum_{n=0}^{mN} \left(\frac{N}{N+\lambda}\right)^n\Bigg[\left(\frac{1}{N}S_{\beta_u}L^{n+1}(J)-\frac{1}{N}S_{\beta_u}L^n(J)\right)\\
&\hspace{1cm}-\left(\frac{1}{N}S_{\beta_u}L_0^{n+1}(J)-\frac{1}{N}S_{\beta_u}L_0^n(J)\right)\Bigg]\Bigg|\leq  \frac{40\Delta^\gamma_Q}{(1-u)}\left(1+\frac{6}{N}\right)^{mN}
\end{split}
\end{align}
where $\Delta_Q^{\gamma}$ is defined in (\ref{def:deltaQ}).
\end{lem}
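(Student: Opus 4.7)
The plan is to compare the two sums in (\ref{diff0}) termwise via the difference $\epsilon_n := L^n(J) - L_0^n(J)$. By Lemma~\ref{lem:LL0-diff}, $\epsilon_{n+1} = L\epsilon_n + \eta_{n+1}$ where $\eta_n = (L - L_0)(L_0^{n-1}(J))$, and a direct computation from (\ref{eq:tst}) and (\ref{def:L0}) gives
\[
(L-L_0)(C) = -\tfrac{1}{N} R_C Q - \tfrac{1}{N} Q R_C + \tfrac{1}{N} R_C + \diag(Q R_C J), \qquad R_C := \diag(C) - \tfrac{\tr(C)}{N}\, I.
\]
In particular, $\eta_k$ depends on $L_0^{k-1}(J)$ only through its traceless diagonal part $R_{L_0^{k-1}(J)}$.

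The key preliminary step is $S_{\beta_u}(\eta_k) = 0$ for every $k$. Using the identity $S_{\beta_u}(M) = (u - u^2)\tr(M) + u^2 \langle \mathbf{1}|M|\mathbf{1}\rangle$, both $\tr(\eta_k)$ and $\langle \mathbf{1}|\eta_k|\mathbf{1}\rangle$ vanish: each of the four constituent pieces reduces to a factor of $\tr(R_{L_0^{k-1}(J)}) = 0$, $\sum_x Q(x,y) = 1$ (double stochasticity from symmetry plus row-stochasticity), or $\sum_x Q(x,x) = 0$. For the $R_C Q$ and $Q R_C$ traces, the zero-trace hypothesis $\tr(Q) = 0$ combined with $Q \geq 0$ forces $Q(x,x) = 0$ for every $x$, which is essential. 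Consequently, from $\epsilon_{n+1} - \epsilon_n = (L-I)\epsilon_n + \eta_{n+1}$ one gets $S_{\beta_u}(\epsilon_{n+1} - \epsilon_n) = S_{\beta_u}((L-I)\epsilon_n)$; combining $|S_{\beta_u}(C)| \leq u\|C\|$ with Proposition~\ref{prop:norm} (so $\vertiii{L - I} \leq 4/N$ and $\vertiii{L} = 1$) and iterating yields $|S_{\beta_u}(\epsilon_{n+1} - \epsilon_n)| \leq (4u/N)\sum_{k=2}^n \|\eta_k\|$ (using $\epsilon_1 = \eta_1 = 0$). Substituting into the left-hand side of (\ref{diff0}), interchanging summations, and bounding $(N/(N+\lambda))^k \leq 1$, the quantity to be estimated is at most $\frac{2}{1-u}\sum_{k=2}^{mN}\|\eta_k\|$.

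It then remains to prove $\|\eta_k\| \leq C\, \Delta_Q^\gamma\, (1+6/N)^{k-2}/N$, after which the geometric sum $\sum_{k=2}^{mN}(1+6/N)^{k-2} \leq (N/6)(1+6/N)^{mN}$ delivers the stated bound with universal constant $40$. Starting from $\|\eta_k\| \leq (4/N)\|R_{L_0^{k-1}(J)}\|$ (the double stochasticity of $Q$ gives $\|R_C Q\|, \|Q R_C\| \leq \|R_C\|$ and also $\|\diag(Q R_C J)\| \leq \|R_C\|/N$), I would expand $R_{L_0^{k-1}(J)} = \sum_{s \geq 2}\alpha(k-1, s) R_{Q^s}$ (noting $R_{Q^0} = R_{Q^1} = 0$ under the hypotheses), apply Lemma~\ref{lem:eigenbdd} to each $\|R_{Q^s}\|$, and use $\sum_{s \geq 2}|\alpha(k-1, s)| \leq (1/N)(1 + 6/N)^{k-2}$ from Lemma~\ref{lem:bdd-0}. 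The main hurdle is reconciling the per-$s$ estimates of Lemma~\ref{lem:eigenbdd} with the single constant $\Delta_Q^\gamma$, defined as a minimum over $s$: since the clean inequality $\min\{a(s), b(s)\} \leq C\Delta_Q^\gamma$ does not hold uniformly in $s$, the proof must either exploit the interaction between the $s$-profile of the Lemma~\ref{lem:eigenbdd} summands and the decay of $|\alpha(k-1,s)|$, or choose adaptively between the two arms of the min in Lemma~\ref{lem:eigenbdd} over different ranges of $s$.
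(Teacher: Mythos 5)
Your route is essentially the paper's: the same decomposition $\epsilon_{n+1}=L\epsilon_n+\eta_{n+1}$ from Lemma~\ref{lem:LL0-diff}, the same operator norms from Proposition~\ref{prop:norm}, Lemma~\ref{lem:eigenbdd} for per-$s$ control of the diagonal defect, and Lemma~\ref{lem:bdd-0} for $\sum_s|\alpha(k-1,s)|$. Your one genuine departure is the observation that $S_{\beta_u}(\eta_k)=0$ (I checked the four pieces: yes, the diagonal of $Q$ vanishes under (\ref{ass:trace}), and double stochasticity handles the rest), which lets you drop the $\|\eta_{n+1}\|$ term from the recursion and get $|S_{\beta_u}(\epsilon_{n+1}-\epsilon_n)|\leq (4u/N)\sum_{k\leq n}\|\eta_k\|$. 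The paper does not use this cancellation; it keeps $\eta_{n+1}$ as a separate term and absorbs it into the constant. Your version is cleaner and in fact yields a constant smaller than $40$ in the end, so nothing is lost.

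The "hurdle" you flag at the end — that the $s$-dependent bound of Lemma~\ref{lem:eigenbdd} cannot be dominated uniformly in $s$ by a quantity defined as $\min_{s}$ — is a real observation, but it is not something your argument needs to resolve by adaptive splitting over $s$: it is precisely where the paper itself writes the single inequality ``$\|\cdot\|\le 16\Delta_Q^\gamma$ for all $s$,'' and that step only holds if $\Delta_Q^\gamma$ in (\ref{def:deltaQ}) is read with $\max_{s\in\Bbb N}$ rather than $\min_{s\in\Bbb N}$ (equivalently, $\sup_s\min\{a(s),b(s)\}$). The surrounding text supports this reading: the discussion after Theorem~\ref{thmm:main-1} ("the task of bounding $\Delta_Q^\gamma$ falls upon controlling $\tr(|Q|^{s_0})/N$" once $a(s)$ is small for $s\le s_0$), the proof of Corollary~\ref{cor:main2}, and the comparison with Proposition~\ref{prop:MT} all require the uniform-in-$s$ quantity. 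Read literally as a $\min_s$, the per-$s$ inequality fails (e.g.\ with walk-regularity at one $s$ giving $\Delta^\gamma_Q=0$ while the defect is large at another $s$). So: with $\Delta_Q^\gamma$ interpreted as $\max_s\min\{a(s),b(s)\}$ — which is what the paper actually uses — Lemma~\ref{lem:eigenbdd} gives $\frac1N\|R_{Q^s}\|\leq 4\Delta_Q^\gamma$ uniformly in $s$; then $\|\eta_k\|\le \frac{4}{N}\|R_{L_0^{k-1}(J)}\|\le 16\Delta_Q^\gamma\sum_{s\ge2}|\alpha(k-1,s)|\le \frac{16\Delta_Q^\gamma}{N}(1+6/N)^{k-2}$, and your geometric sum closes the argument with room to spare.
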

\begin{proof}
We study the summands of the term in (\ref{T'}) for $1\leq n\leq mN$ (the summand with $n=0$ is zero by Corollary~\ref{cor:tst} and (\ref{def:L0})).
By (\ref{eq:LLn}), we have
\begin{align}
&\left(\frac{1}{N}S_{\beta_u}L^{n+1}(J)-\frac{1}{N}S_{\beta_u}L^n(J)\right)-\left(\frac{1}{N}S_{\beta_u}L_0^{n+1}(J)-\frac{1}{N}S_{\beta_u}L_0^n(J)\right)
\notag\\
=&\frac{S_{\beta_u}}{N}\left(\sum_{j=0}^nL^j\big(\eta_{n+1-j}\big)-\sum_{j=0}^{n-1}L^j\big(\eta_{n-j}\big)\right)\notag\\
\begin{split}\label{p1L0}
=&\frac{S_{\beta_u}}{N}
\left(\eta_{n+1}+\sum_{j=0}^{n-1}(L-I)L^j\big(\eta_{n-j}\big)\right).
\end{split}
\end{align}
Since
$\vertiii{S_{\beta_u}}=u$, (\ref{p1L0}) implies 
\begin{align}
&\left|\left(\frac{1}{N}S_{\beta_u}L^{n+1}(J)-\frac{1}{N}S_{\beta_u}L^n(J)\right)-\left(\frac{1}{N}S_{\beta_u}L_0^{n+1}(J)-\frac{1}{N}S_{\beta_u}L_0^n(J)\right)\right|\notag\\
\leq &\frac{u}{N}\Big(\|\eta_{n+1}\|+\vertiii{L-I}\times\|\eta_n\|+\cdots +\vertiii{L-I}\times \vertiii{L}^{n-1}\times \|\eta_1\|\Big)\notag\\
\leq &\frac{u}{N}\left(\|\eta_{n+1}\|+\frac{4}{N}\|\eta_n\|+\cdots +\frac{4}{N}\|\eta_1\|\right),
\label{eq:LL0-diff01}
\end{align}
where in the last equality we use $\vertiii{L}=1$ and $\vertiii{L-I}\leq 4/N$ by Lemma~\ref{prop:norm}.

To handle the right-hand side of (\ref{eq:LL0-diff01}), we consider the $\ell_1$-norms (defined as in (\ref{eq:matrix-norm})) of the matrices followed by $\alpha(n-1,s)$ in the expression (\ref{eq:etan}) for $\eta_n$. We have
\begin{align}
&\Bigg\|\left[\frac{1}{N}\diag(Q^s)+\diag\big(Q\diag(Q^s)J\big)-2\frac{\tr(Q^s)}{N^2}\right]I\notag\\
&+\left[-\frac{1}{N}\diag(Q^s)Q-Q\frac{1}{N}\diag(Q^s)+\frac{2\tr(Q^s)}{N^2}Q\right]\Bigg\|\notag\\
\leq &\left\|\frac{1}{N}\diag(Q^s)-\frac{\tr(Q^s)}{N^2}I\right\|+\left\|\diag\big(Q\diag(Q^s)J\big)-\frac{\tr(Q^s)}{N^2}I\right\|\notag\\
&+\left\|\left(-\frac{1}{N}\diag(Q^s)+\frac{\tr(Q^s)}{N^2}I\right)Q\right\|\notag\\
&+\left\|Q\left(-\frac{1}{N}\diag(Q^s)+\frac{\tr(Q^s)}{N^2}I\right)\right\|\notag\\
\leq &\left\|\frac{1}{N}\diag(Q^s)-\frac{\tr(Q^s)}{N^2}I\right\|+\left\|\diag\big(Q\diag(Q^s)J\big)-\frac{\tr(Q^s)}{N^2}I\right\|\notag\\
&+\left\|\frac{1}{N}\diag(Q^s)-\frac{\tr(Q^s)}{N^2}I\right\|+\left\|\frac{1}{N}\diag(Q^s)-\frac{\tr(Q^s)}{N^2}I\right\|,\label{eq:LL0-diff2}
\end{align}
where the next to the last equality follows from the fact that $Q$ is a symmetric probability matrix.
Then apply Lemma~\ref{lem:eigenbdd} to (\ref{eq:LL0-diff2}), and we obtain
\begin{align*}
&\Bigg\|\left[\frac{1}{N}\diag(Q^s)+\diag\big(Q\diag(Q^s)J\big)-2\frac{\tr(Q^s)}{N^2}\right]I\notag\\
&\hspace{1cm}+\left[-\frac{1}{N}\diag(Q^s)Q-\frac{1}{N}Q\diag(Q^s)+\frac{2\tr(Q^s)}{N^2}Q\right]\Bigg\|
\leq 16\Delta_Q^{\gamma}
\end{align*}
for $\Delta_Q^{\gamma}$ defined by (\ref{def:deltaQ}). 
Applying (\ref{eq:LL0-diff2}) to the right-hand side of (\ref{eq:etan}), we find that for every $n\in \Bbb N$,
\begin{align*}
\|\eta_n\|\leq &16\Delta_Q^{\gamma}\times \sum_{s=0}^\infty |\alpha(n-1,s)|
\leq   \frac{48\Delta_Q^{\gamma}}{N}\left(1+\frac{6}{N}\right)^{n-1},
\end{align*}
where the last equality follows from Lemma~\ref{lem:bdd-0}.
By the foregoing inequality and (\ref{eq:LL0-diff01}), we obtain that
for all $1\leq n\leq mN$,
\begin{align*}
&\left|\left(\frac{1}{N}S_{\beta_u}L^{n+1}(J)-\frac{1}{N}S_{\beta_u}L^n(J)\right)-\left(\frac{1}{N}S_{\beta_u}L_0^{n+1}(J)-\frac{1}{N}S_{\beta_u}L_0^n(J)\right)\right|\notag\\
&\hspace{.5cm}\leq \frac{48 \Delta_Q^{\gamma}u}{N^2}\left\{\left(1+\frac{6}{N}\right)^{n}+\frac{4}{N}\times \left(1+\frac{6}{N}\right)^{n-1}+\cdots+\frac{4}{N}\times\left(1+\frac{6}{N}\right)+\frac{4}{N}
\right\}\\
&\hspace{.5cm}\leq \frac{80\Delta_Q^{\gamma}u}{N^2}\left(1+\frac{6}{N}\right)^n.
\end{align*}
The foregoing inequality is enough to obtain the required inequality (\ref{diff0}) (recall that the summand with $n=0$ is zero).
\end{proof}

\paragraph{\bf Conclusion for the proof of Theorem~\ref{thmm:main-1}.} The required inequality (\ref{crit:main0}) of the theorem follows plainly by applying
Lemma~\ref{lem:T1}, Lemma~\ref{lem:T2} and Lemma~\ref{lem:T3}, if we recall Lemma~\ref{lem:MUV-exp} (ii) and (iii) and take $u$ to be $1/2$.
\qed

\subsection{Application to large random regular graphs}\label{sec:cor-main2}
We give the proof of Corollary~\ref{cor:main2} in this section, and fix $k\geq 2$. 
Assume that the state space on which $Q^{(n)}$ lives is given by $E_n$ and has size $N_n\nearrow \infty$. 
It follows from a standard result of random regular graphs that for every $s\in \Bbb Z_+$, 
 \[
\lim_{n\to\infty}\frac{\min_{x\in E_n}[N_n-\#\mathcal R^0_{Q^{(n)}}(x,s)]}{N_n}=0\quad\mbox{a.s.}
\]
(cf. Wormald~\cite[Section 2]{W:MRG} for the above convergence as well as the present assumption that the random regular graphs are simple and connected).
We also know that a.s., 
the sequence of empirical eigenvalues distributions of $Q^{(n)}$, namely
\[
\frac{1}{N_n}\#\{r\in (-\infty,q];r\mbox{ is an eigenvalue of }Q^{(n)}\},\quad q\in \R,
\]
converges weakly to the (normalized) Kesten-McKay distribution with density given by (\ref{KM:SM}),
and hence, (\ref{crit:main}) holds. 
See Kesten~\cite{K:SRG} for the fact that the function in (\ref{KM:SM}) is a density of the spectral measure of the random walk kernel on infinite $k$-regular tree, and McKay~\cite[Theorem~4.3]{M:EED} for the above convergence of empirical eigenvalue distributions.

To use (\ref{conv:main}) and find the limit of the ratios of Green functions there,
we apply the spectral representation (\ref{ratio}) and deduce that
\begin{align*}
&\lim_{n\to\infty}\left.\E^{(n)}\left[\int_0^\infty e^{-t\lambda /2}\1_{\{V\}}(X_t^{U})dt\right]\right/
\E^{(n)}\left[\int_0^\infty e^{-t\lambda /2}\1_{\{V\}}(X_t^{V})dt\right]\\
=&\left.\E^{(\infty)}\left[\int_0^\infty e^{-t\lambda /2}\1_{\{y\}}(X_t^{x})dt\right]\right/
\E^{(\infty)}\left[\int_0^\infty e^{-t\lambda /2}\1_{\{y\}}(X_t^{y})dt\right]\\
=&\,\E^{(\infty)}\big[e^{-\lambda H_{x,y}/2}\big],
\end{align*}
almost surely with respect to the randomness that the graphs are chosen,
where $x$ and $y$ are any adjacent vertices on the infinite $k$-regular tree. This proves Corollary~\ref{cor:main2}.

\section{Meeting times of higher orders}\label{sec:MT}
Let $\{U_n;n\in \Bbb Z_+\}$ and $\{V_n;n\in \Bbb Z_+\}$ be two sequences of $Q$-chains with $U_0=V_0$ so that $U_0$ has uniform distribution on $E$, and conditioned on $U_0$, the two chains are independent. We assume in addition that $\{U_n\}$ and $\{V_n\}$ are independent of a system of coalescing $Q$-chains which we may denote it by $\{X^x\}$. Below we consider meeting times of all orders from a slightly more general point of view, using the pairs $(U_m,V_n)$ as starting points.

\begin{prop}\label{prop:MT}
For all $\ell,m,n\in \Bbb Z_+$, 
\begin{align}\label{idd-M}
M_{V_\ell,V_{\ell+m+n}} \stackrel{(\rm d)}{=} M_{U_m,V_n}
\end{align}
and
\begin{align}
\begin{split}\label{eq:shifttime0}
&\int_0^t 2e^{-2(t-v)}\P(M_{V_0,V_{m+n+1}}>v)dv\\
=&\P(M_{U_m,V_n}>t)-e^{-2t}\left(1-\frac{\tr(Q^{m+n})}{N}\right)\\
&+\int_0^t 2e^{-2(t-v)}\frac{1}{N}\sum_{x,y\in E}\langle x|Q^{m+n}|x\rangle\langle x|Q|y\rangle\P(M_{x,y}>v)dv.
\end{split}
\end{align}
\end{prop}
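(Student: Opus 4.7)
The plan is to handle (\ref{idd-M}) by a direct joint-distribution computation, and then to derive (\ref{eq:shifttime0}) via a first-step analysis on the meeting time $M_{V_0,V_{m+n}}$, which by (\ref{idd-M}) with $\ell=0$ has the same law as $M_{U_m,V_n}$. The key structural fact used throughout is that the symmetry of $Q$ forces the uniform measure on $E$ to be stationary, so the joint law of $(V_\ell,V_{\ell+m+n})$ is $\P(V_\ell=x,V_{\ell+m+n}=y)=\frac{1}{N}\langle x|Q^{m+n}|y\rangle$ for every $\ell\in\Bbb Z_+$.

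For (\ref{idd-M}), conditioning on $U_0=V_0=z$ and using the conditional independence of $U_m,V_n$ given $U_0$ together with $\langle z|Q^m|x\rangle=\langle x|Q^m|z\rangle$, one computes
\[
\P(U_m=x,V_n=y)=\sum_{z\in E}\frac{1}{N}\langle z|Q^m|x\rangle\langle z|Q^n|y\rangle=\frac{1}{N}\langle x|Q^{m+n}|y\rangle,
\]
which matches the joint law of $(V_\ell,V_{\ell+m+n})$. Since the coalescing family $\{X^x\}$ is independent of the starting points, the law of a meeting time starting from a random pair depends on that pair only through its joint distribution, and (\ref{idd-M}) follows.

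For (\ref{eq:shifttime0}), I would perform first-step analysis on $M_{V_0,V_{m+n}}$. For $t>0$ we have $\{M_{V_0,V_{m+n}}>t\}\subseteq\{V_0\neq V_{m+n}\}$, and splitting at the first time $\tau\sim\mathrm{Exp}(2)$ at which one of the two chains transitions yields
\[
\P(M_{V_0,V_{m+n}}>t)=e^{-2t}\Big(1-\frac{\tr(Q^{m+n})}{N}\Big)+\int_0^t 2e^{-2s}\,\E\bigl[\1_{\{V_0\neq V_{m+n}\}}\phi_s\bigr]\,ds,
\]
where $\phi_s$ denotes the conditional survival probability of the meeting time from the post-jump state, averaged with weight $1/2$ over which chain jumps. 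When the $V_0$-chain jumps to $X'\sim Q(V_0,\cdot)$, the unconditional joint law of $(X',V_{m+n})$ is exactly $\frac{1}{N}Q^{m+n+1}$ by the symmetry of $Q$, and the symmetric statement holds when the $V_{m+n}$-chain jumps. Dropping the indicator $\1_{\{V_0\neq V_{m+n}\}}$ therefore identifies the unconstrained expectation with $\P(M_{V_0,V_{m+n+1}}>t-s)$, while subtracting back the diagonal contribution $V_0=V_{m+n}$ produces exactly $\frac{1}{N}\sum_{x,y}\langle x|Q^{m+n}|x\rangle\langle x|Q|y\rangle\P(M_{x,y}>t-s)$. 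Substituting $v=t-s$, rearranging, and using (\ref{idd-M}) with $\ell=0$ to replace $M_{V_0,V_{m+n}}$ by $M_{U_m,V_n}$ then yields (\ref{eq:shifttime0}).

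The main technical obstacle is the careful bookkeeping of the diagonal event $\{V_0=V_{m+n}\}$: the post-jump joint law of either new pair is $\frac{1}{N}Q^{m+n+1}$ only after the diagonal is re-included, so the subtracted diagonal is precisely what produces the non-convolution correction term on the right-hand side of (\ref{eq:shifttime0}). Everything else reduces to the symmetry of $Q$ (used to collapse $\sum_a\langle x'|Q|a\rangle\langle a|Q^{m+n}|b\rangle$ into $\langle x'|Q^{m+n+1}|b\rangle$) and the symmetry of $M_{x,y}$ in its two arguments.
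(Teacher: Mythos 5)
Your proof is correct and follows essentially the same route as the paper's: part (\ref{idd-M}) reduces to reversibility (your explicit joint-law computation is the concrete form of the paper's mass-transport identity $\E[f(U_{n'+1},V_{n'})]=\E[f(U_{n'},V_{n'+1})]$), and part (\ref{eq:shifttime0}) is the same first-epoch decomposition, with the diagonal $\{V_0=V_{m+n}\}$ producing the correction term. The only cosmetic difference is that you average over which chain jumps (appealing to the symmetry of $M_{x,y}$ to collapse the two cases), whereas the paper's convention records the post-jump pair as $(U_n,V_{m+1})$ and lets the same symmetry do the work implicitly.
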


Recall the definition (\ref{def:RQ}) of the sets $\mathcal R^\gamma_Q(x,s)$, and note that, for the last integral in (\ref{eq:shifttime0}),
\begin{align*}
&\left|\frac{1}{N}\sum_{x,y\in E}\langle x|Q^{m+n}|x\rangle \langle x|Q|y\rangle \P(M_{x,y}>v)-\frac{\tr(Q^{m+n})}{N}\P(M_{U,V}>v)\right|\\
\leq &\left\|\frac{\diag(Q^{m+n})}{N}-\frac{\tr(Q^{m+n})}{N^2}I\right\|\\
\leq &\min\left\{\frac{4\min_{x\in E}[N-\#\mathcal R_Q^\gamma(x,m+n)]}{N}+\gamma,\frac{2\tr(|Q|^{m+n};Q<1)}{N}\right\}
\end{align*}
by Lemma~\ref{lem:eigenbdd}, where the bound is independent of $v$. 

\begin{proof}[Proof of Proposition~\ref{prop:MT}]
By the mass-transport equation
\begin{align}\label{mass-transport}
\E[f(U_{n'+1},V_{n'})]=\E[f(U_{n'},V_{n'+1})],\quad \forall\; f:E\times E\lra \R,\;n'\in \Bbb Z_+,
\end{align}
which follows from the reversibility of the $Q$-chains $\{U_{n'}\}$ and $\{V_{n'}\}$, it holds that $M_{U_m,V_n}$ has the same distribution as $M_{U_0,V_{m+n}}=M_{V_0,V_{m+n}}$. By this fact and the stationarity of the chain $\{V_{n'}\}$, (\ref{idd-M}) follows. 

To obtain (\ref{eq:shifttime0}), we condition $(X^{U_n},X^{V_m})$ at the first epoch time and obtain
\begin{align}
\P(M_{U_n,V_m}>t)=&e^{-2t}\P(U_n\neq V_m)\notag\\
&+\int_0^t 2e^{-2(t-v)}\P(U_n\neq V_{m},M_{U_n,V_{m+1}}>v)dv.\label{eq:st1}
\end{align}
Here on the right-hand side of (\ref{eq:st1}), the first term and the distribution $\P(U_n\neq V_m,U_n=x,V_{m+1}=y)$ can be determined respectively as $\P(U_n=V_m)=\tr(Q^{m+n})/N$
and
\begin{align*}
&\P(U_n\neq V_m,U_n=x,V_{m+1}=y)\\
&\hspace{2.1cm}=\P(U_n=x,V_{m+1}=y)-\P(U_n=V_m,U_n=x,V_{m+1}=y)\\
&\hspace{2.1cm}=\frac{1}{N}\langle x|Q^{n+m+1}|y\rangle-\frac{1}{N}\langle x|Q^{n+m}|x\rangle \langle x|Q|y\rangle.
\end{align*}
The proof is complete.
\end{proof}

\begin{cor}\label{cor:Mtime1}
If, for some $n\in \Bbb Z_+$, the return probabilities $\langle x|Q^n|x\rangle $ do not depend on $x$, then
\begin{align*}
\begin{split}
&\int_0^t 2e^{-2(t-v)}\P(M_{V_0,V_{n+1}}>v)dv\\
=&\P(M_{V_0,V_n}>t)-e^{-2t}\left(1-\frac{\tr(Q^{n})}{N}\right)
+\frac{\tr(Q^{n})}{N}\int_0^t 2e^{-2(t-v)}\P(M_{U,V}>v)dv.
\end{split}
\end{align*}
\end{cor}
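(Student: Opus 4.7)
The plan is to derive Corollary~\ref{cor:Mtime1} as a direct specialization of equation~(\ref{eq:shifttime0}) in Proposition~\ref{prop:MT}. First I would apply Proposition~\ref{prop:MT} with the parameter choice $m=0$, keeping the $n$ of the proposition matched with the $n$ of the corollary. Under this choice the left-hand side of (\ref{eq:shifttime0}) becomes $\int_0^t 2e^{-2(t-v)}\P(M_{V_0,V_{n+1}}>v)\,dv$, which is exactly the left-hand side in the corollary, and the first term $\P(M_{U_m,V_n}>t)$ on the right-hand side becomes $\P(M_{U_0,V_n}>t) = \P(M_{V_0,V_n}>t)$ since by construction $U_0=V_0$. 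Similarly the term $e^{-2t}(1-\tr(Q^{m+n})/N)$ collapses to $e^{-2t}(1-\tr(Q^n)/N)$.

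The substantive step is to simplify the last integrand in (\ref{eq:shifttime0}) under the hypothesis that $\langle x|Q^n|x\rangle$ does not depend on $x$. Since the common value of the diagonal entries of $Q^n$ must equal their arithmetic mean, we have $\langle x|Q^n|x\rangle = \tr(Q^n)/N$ for every $x\in E$. Pulling this constant out of the double sum gives
\[
\frac{1}{N}\sum_{x,y\in E}\langle x|Q^n|x\rangle\langle x|Q|y\rangle\P(M_{x,y}>v)
= \frac{\tr(Q^n)}{N}\cdot\frac{1}{N}\sum_{x,y\in E}\langle x|Q|y\rangle\P(M_{x,y}>v).
\]
The remaining double sum is precisely $\P(M_{U,V}>v)$, because by the convention fixed in Section~\ref{sec:intro} the pair $(U,V)=(U,V_1)$ satisfies $\P(U=x,V=y) = \frac{1}{N}\langle x|Q|y\rangle$. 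Substituting this back and inserting it into (\ref{eq:shifttime0}) yields the claimed identity.

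I do not anticipate any genuine obstacle: the proof is essentially a specialization plus the elementary observation that constancy of the diagonal of $Q^n$ forces $\diag(Q^n) = (\tr(Q^n)/N)\, I$, which decouples the $x$-sum from the meeting-time probabilities. The only thing worth stating explicitly is the recognition of $\frac{1}{N}\sum_{x,y}\langle x|Q|y\rangle\P(M_{x,y}>v)$ as $\P(M_{U,V}>v)$, so that the final expression is written in terms of the first-order meeting time of central interest in the paper.
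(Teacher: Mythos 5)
Your proposal is correct and follows exactly the route the paper intends: specialize~(\ref{eq:shifttime0}) of Proposition~\ref{prop:MT} to $m=0$, use $U_0=V_0$, and use the constancy of $\langle x|Q^n|x\rangle$ to pull $\tr(Q^n)/N$ out of the double sum, recognizing the remainder as $\P(M_{U,V}>v)$ since $\P(U=x,V=y)=\tfrac{1}{N}\langle x|Q|y\rangle$. The paper gives no separate proof; the remark between Proposition~\ref{prop:MT} and the corollary already identifies the relevant error term $\bigl\|\diag(Q^{m+n})/N-\tr(Q^{m+n})I/N^2\bigr\|$, and your argument is precisely the observation that this quantity vanishes identically under the stated hypothesis.
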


Hence, in the case that $Q$ is walk regular, the distributions of $M_{U,V_n}$ for all $n\in \Bbb N$ can be completely determined by $M_{U,V}$ by iteration and differential calculus.

\end{document}